\documentclass[]{scrartcl}

\usepackage[utf8]{luainputenc}
\usepackage[USenglish]{babel}
\usepackage{csquotes}

\usepackage[a4paper,top=27mm,bottom=20mm,inner=25mm,outer=20mm]{geometry}

\usepackage{graphicx}
\usepackage{verbatim}
\usepackage{stmaryrd}
\usepackage{amsmath}
\usepackage{comment}
\usepackage{listings}
\usepackage[table,xcdraw]{xcolor}
\usepackage{color}
\usepackage{amsthm}
\usepackage{times}
\usepackage[hidelinks]{hyperref}
\usepackage{multirow}
\usepackage{makecell}
\usepackage{booktabs}
\usepackage{array}
\usepackage{enumitem}
\usepackage{amssymb}
\usepackage{subcaption}
\usepackage{siunitx}
\usepackage{bm}
\usepackage[normalem]{ulem}
\usepackage{adjustbox}

\usepackage[T1]{fontenc}
\usepackage{newpxtext,newpxmath}

\usepackage[%
  backend=bibtex,bibencoding=ascii,
  style=numeric-comp,
  giveninits=true, uniquename=init, %abbreviate first names
  natbib=true,
  url=false,
  doi=true,
  isbn=false,
  backref=false,
  maxnames=99,
]{biblatex}
\definecolor{dkgreen}{rgb}{0,0.6,0}
\definecolor{gray}{rgb}{0.5,0.5,0.5}
\definecolor{mauve}{rgb}{0.58,0,0.82}

\newcommand*{\ldblbrace}{\{\mskip-5mu \left \{}
\newcommand*{\rdblbrace}{ \}\mskip-5mu \right \}}
\newcommand{\mean}[1]{\left\ldblbrace #1 \right\rdblbrace}

\newcommand{\mtx}[1]{\boldsymbol{#1}}
\renewcommand{\vec}[1]{\boldsymbol{#1}}

\newcommand{\norm}[1]{\left\lVert #1 \right\rVert}
\newcommand{\jump}[1]{\left\llbracket #1  \right\rrbracket}
\newtheorem{theorem}{Theorem}[section]
\newtheorem{corollary}[theorem]{Corollary}
\newtheorem{lemma}[theorem]{Lemma}
\theoremstyle{definition}
\newtheorem{remark}[theorem]{Remark}

\definecolor{suggcolor}{rgb}{0.0,0.5,0.0}

\newcommand{\orcid}[1]{ORCID:~\href{https://orcid.org/#1}{#1}}
\usepackage{authblk}

\definecolor{dkgreen}{rgb}{0,0.6,0}
\definecolor{gray}{rgb}{0.5,0.5,0.5}
\definecolor{mauve}{rgb}{0.58,0,0.82}

\newenvironment{keywords}{\par\textbf{Key words.}}{\par}
\newenvironment{AMS}{\par\textbf{AMS subject classification.}}{\par}

\title{Entropy-Stable and Well-Balanced Discontinuous Galerkin Methods for the Compressible Euler Equations in Vector-Invariant Form}

\author[1]{Marco Artiano\thanks{\orcid{0009-0009-5872-702X}}}
\affil[1]{Institute of Mathematics, Johannes Gutenberg University Mainz, Germany}

\author[2]{Kieran Ricardo\thanks{\orcid{0009-0000-4544-3049}}}
\affil[2]{Mathematical Sciences Institute, Australian National University, Canberra, Australia}

\author[3]{Oswald Knoth}
\affil[3]{Leibniz Institute for Tropospheric Research (TROPOS), Leipzig, Germany}

\author[4]{Peter Spichtinger\thanks{\orcid{0000-0003-4008-4977}}}
\affil[4]{Institute for Atmospheric Physics, Johannes Gutenberg University Mainz, Germany}

\author[1]{Hendrik Ranocha\thanks{\orcid{0000-0002-3456-2277}}}

\begin{document}
\maketitle

\begin{abstract}
We develop structure-preserving methods for the compressible Euler equations with gravity in vector-invariant form and potential temperature as a prognostic variable within the flux-differencing discontinuous Galerkin spectral element method (DGSEM) framework.
By discretizing the nonconservative terms as symmetric and antisymmetric products, we derive two-point numerical fluxes that conserve both the thermodynamic entropy and the total energy.
Moreover, we design an entropy-stable numerical flux that is well-balanced for both isothermal and isentropic background states.
All properties are shown to carry over to the high-order DGSEM on general curvilinear meshes.
Several numerical examples confirm the theoretical findings and show the robustness and accuracy of the scheme for use in modern dynamical cores for atmospheric flows.
\end{abstract}

\begin{keywords}
  structure-preserving methods,
  entropy-conserving methods,
  entropy-stable methods,
  energy-conserving methods,
  nonconservative terms,
  summation-by-parts operators,
  discontinuous Galerkin methods,
  curvilinear coordinates,
  well-balanced methods,
  vector-invariant form,
  dynamical core
\end{keywords}

\begin{AMS}
  65M12, % Stability and convergence of numerical methods for initial value and initial-boundary value problems involving PDEs
  65M20, % NA, PDEs, IVPs, IBVPs: Method of lines for initial value and initial-boundary value problems involving PDEs
  65M70, % Spectral, collocation and related methods for initial value and initial-boundary value problems involving PDEs
  65M60, % Finite element, Rayleigh--Ritz and Galerkin methods for initial value and initial-boundary value problems involving PDEs
  65M06, % NA, PDEs, IVPs, IBVPs: Finite difference methods for initial value and initial-boundary value problems involving PDEs
  65M08, % NA, PDEs, IVPs, IBVPs: Finite volume methods for initial value and initial-boundary value problems involving PDEs
  86-08 % Computational methods for problems pertaining to geophysics
\end{AMS}
\newpage
\section{Introduction}
\label{sec:introduction}
In recent decades, increasing attention has been devoted to the development of modern dynamical cores, driven by advances in high-performance computing and by the need for more accurate, efficient and robust numerical methods for atmospheric flows.
Many operational and research cores are based on spectral-element methods, which combine high-order accuracy with good scalability on modern CPU and GPU architectures (see, e.g.,~\cite{giraldo2013numa,CAM-SE2018,choi2016,yatunin_clima2016,kawai2025,baldaufbridge2026,ricardo2024thermodynamicconsistencystructurepreservationsummation}); we refer to~\cite{marras2016review} for a review of element-based Galerkin methods for numerical weather prediction.

Among the high-order spectral element methods, discontinuous Galerkin (DG) methods have gained significant attention due to their ability to handle arbitrary geometries, their high-order accuracy, and their excellent scalability for parallel computing on CPU and GPU hardware~\cite{wang2013high,NUMA,waterhouse2026gpuperformanceentropystablediscontinuous,candelaresi2026massivelyparallelnumericalsimulations}.
Since the pioneering work of Carpenter et al.~\cite{Carpenter2014,CarpenterBook}, Fisher et al.~\cite{FISHER2013518} and Gassner et al.~\cite{gassner2016split}, the flux-differencing discontinuous Galerkin spectral element method (DGSEM) has emerged as a promising approach for solving the compressible Euler equations.
Combining entropy-conservative/stable fluxes~\cite{Tadmor1987,Tadmor2003} with a flux-differencing volume integral and suitable interface fluxes yields provably entropy-stable (or entropy-conservative) and well-balanced schemes.
This increases robustness~\cite{sjogreen2018high,klose2020assessing,rojas2021robustness,Souza,WARUSZEWSKI2022111507,ricardo2024thermodynamicconsistencystructurepreservationsummation,LIU2025114095} and removes the need for ad hoc stabilization techniques, such as artificial viscosity~\cite{ULLRICH2018427,CiCP-27-5}, filtering~\cite{GIRALDO20083849} and dealiasing~\cite{MENGALDO201556,winters2018comparative} to control spurious oscillations and robustness issues in underresolved simulations.
It also allows convergence analyses going beyond arguments relying on linearization~\cite{lukacova2022convergence,ranocha2026convergence,chen2026error}.

Various formulations of the compressible Euler equations are currently being explored for the development of modern dynamical cores, differing in the choice of the thermodynamic variables, such as potential temperature, entropy, internal or total energy, as well as in the form of the momentum equation, e.g., the conservative or the vector-invariant form, and it is still unclear whether one formulation offers advantages over the others~\cite{GIRFOGLIO2025106510,artiano2025structurepreservinghighordermethodscompressible,baldaufbridge2026,ricardo2024thermodynamicconsistencystructurepreservationsummation, GIRALDO20083849}.

In particular, the vector-invariant form, for which energy conservation can be established only through integration by parts, is widely used in modern dynamical cores.
For example, Taylor et al.~\cite{taylor2019} proposed an energy-consistent discretization that uses the vector-invariant form for the horizontal momentum and an advective form for the vertical transport, with the potential temperature as a prognostic variable.
The CliMA dynamical core~\cite{yatunin_clima2016} discretizes the vector-invariant form with the total energy as a prognostic variable, following a hybrid approach that couples a spectral-element method with staggered finite differences in the vertical direction.

In the DG framework, several structure-preserving discretizations have been proposed for various formulations for atmospheric flows.
Ricardo et al.~\cite{ricardo2024thermodynamicconsistencystructurepreservationsummation} proposed a DG method for the moist compressible Euler equations in vector-invariant form that is entropy-conserving and energy-conserving/stable, with the thermodynamic entropy as a prognostic variable.
Waruszewski et al.~\cite{WARUSZEWSKI2022111507} extended the entropy-conservative flux of Chandrashekar~\cite{chandrashekar2013} to the compressible Euler equations with gravity in the total energy formulation;
they proved that their scheme is well-balanced for an isothermal background state for gravity aligned to the mesh (see also~\cite{KAPPELI2014199, ChandrashekarWB, ChandrashekarWBDG, MICHELDANSAC2025106853, LIU2025114095} for different approaches to well-balanced schemes).
Artiano et al.~\cite{artiano2025structurepreservinghighordermethodscompressible} proposed structure-preserving, entropy- and energy-conservative/stable, and well-balanced schemes for different sets of equations, using either potential temperature or the sum of kinetic and internal energy;
they also constructed a scheme that is well-balanced for an isentropic background state and extended it to arbitrary curvilinear meshes.

In this work, we focus on the spatial discretization of the compressible Euler equations in vector-invariant form, with potential temperature as the thermodynamic variable.
To the authors' knowledge, no provably entropy-stable scheme has been proposed for the vector-invariant form so far.
Leveraging the recent framework for nonconservative terms introduced in~\cite{artiano2026affordable}, we derive new two-point fluxes that guarantee conservation of total energy and thermodynamic entropy.
We further construct an entropy-stable numerical flux that guarantees the positivity of the pressure, and we prove that the resulting schemes are well-balanced for two families of hydrostatic background states, namely isentropic and isothermal background states.
The scheme is first presented in the finite volume framework and then extended to the DGSEM on arbitrary curvilinear meshes, following the work in~\cite{artiano2026affordable}.

The paper is organized as follows.
In Section~\ref{sec:vector_invariant_form}, we introduce the vector-invariant formulation of the compressible Euler equations and derive total-energy- and entropy-conservative two-point fluxes for finite volume methods.
Entropy stability is investigated in Section~\ref{sec:entropy_stability}, and pressure positivity for strong-stability-preserving (SSP) Runge--Kutta methods is proved in Section~\ref{sec:positivity}.
The well-balanced schemes for isentropic and isothermal background states and their derivation are presented in Section~\ref{sec:well_balanced}.
The novel entropy-stable and well-balanced numerical flux is presented in Section~\ref{sec:riemann_solver}.
Section~\ref{sec:dgsem} extends all conservation properties, the well-balanced schemes, and the entropy stability to the DGSEM on general curvilinear meshes.
Numerical results are reported in Section~\ref{sec:numerical_results}, and a summary with conclusions is given in Section~\ref{sec:summary_conclusions}.
\section{Vector-invariant form of the compressible Euler equations}
\label{sec:vector_invariant_form}
The compressible Euler equations in vector-invariant form can be written as~\cite{gassmann2013}

\begin{equation}
  \begin{aligned}
&\frac{\partial \varrho}{\partial t}
+
\nabla \cdot (\varrho \vec{V})
=0,\\
&\frac{\partial \vec{V}}{\partial t}
+
(\nabla \times \vec{V}) \times \vec{V}
+
\nabla\left(\frac{\norm{\vec{V}}^2}{2}\right)
+
c_p \theta \nabla \Pi
+
 \nabla \phi
=\vec{0},\\
&\frac{\partial (\varrho\theta)}{\partial t}
+
\nabla \cdot (\varrho\theta \vec{V})
=0,
  \end{aligned}
  \label{eq:vector_invariant_form}
\end{equation}
where $\varrho$ is the density, $\vec{V}$ the velocity, $\theta$ the potential temperature,
$p = p_0 \left ( \varrho \theta R / p_0 \right )^{\gamma}$ the pressure, $p_0 = \SI{100000}{Pa}$ the reference pressure,
$\Pi = \left (\varrho \theta R / p_0 \right )^{R/{c_v}}$ the Exner pressure, and $\phi = \phi(x,y,z)$ the geopotential.
We assume an ideal gas equation of state, i.e., $p = \varrho R T$ with the gas constant $R = c_p - c_v$ and the temperature $T$.
The (constant) specific heats at constant volume and pressure are denoted by $c_v$ and $c_p$, respectively, and $\gamma = c_p / c_v$ is the ratio of specific heats.

For smooth solutions, the system~\eqref{eq:vector_invariant_form} admits additional conservation laws of the form
\begin{equation}
\partial_t U(\vec{u}) + \nabla \cdot \vec{F} (\vec{u}) = 0,
\label{eq:entropy_conservation_law}
\end{equation}
where $U$ is a mathematical entropy, $\vec{F}$ the associated entropy flux, and $\vec{u} = (\varrho, \vec{V}, \varrho \theta)$ are the prognostic variables\footnote{Technically, we also include the geopotential $\phi$ in $\vec{u}$ with trivial evolution equation $\partial_t \phi = 0$. To simplify the notation, we omit the explicit dependence on $\phi$ in the mathematical entropy.}.
In particular, both the thermodynamic entropy and the total energy act as mathematical entropies for the system~\eqref{eq:vector_invariant_form}, and the corresponding entropy fluxes can be derived from the conservation laws.
In the following, we consider the total energy
\begin{equation}
U_{\varrho E} = \frac{p}{\gamma-1} + \frac{1}{2} \varrho \norm{\vec{V}}^2 + \varrho \phi, \qquad \vec{F}_{\varrho E} = \left ( U_{\varrho E} + p \right ) \vec{V},
\end{equation}
and the thermodynamic entropy
\begin{equation}
U_{\varrho s} = -\frac{\varrho s}{\gamma-1}, \qquad s = \log\left(\frac{p}{\varrho^\gamma}\right), \quad \vec{F}_{\varrho s} = U_{\varrho s} \vec{V},
\end{equation}
where $s$ is the specific entropy.
Next, we derive entropy-conservative (EC) and total-energy-conservative (TEC) numerical fluxes for the system~\eqref{eq:vector_invariant_form}.
Such fluxes are the first building blocks in the construction of many entropy-stable schemes~\cite{fjordholm2012arbitrarily,chandrashekar2013,WINTERS2017274,WARUSZEWSKI2022111507} based on Tadmor's approach~\cite{Tadmor1987,Tadmor2003}.

\subsection{Entropy analysis}
\label{sec:entropy_analysis}
Because of the particular structure of the vector-invariant form, we restrict the analysis to the two-dimensional case for simplicity, i.e., $\vec{V} = (v_1, v_3) = (u, w)$.
The extension to three dimensions follows along the same lines. The system~\eqref{eq:vector_invariant_form} in two-dimensional Cartesian coordinates reads\footnote{Traditionally, $z$ denotes the vertical coordinate in atmospheric flows, hence the indices $1$ and $3$.}
\begin{equation}
  \begin{aligned}
&\frac{\partial \varrho}{\partial t}
+ \frac{\partial (\varrho v_1)}{\partial x}
+ \frac{\partial (\varrho v_3)}{\partial z}
=0,\\
&\frac{\partial v_1}{\partial t}
+ v_3\left(\frac{\partial v_1}{\partial z}-\frac{\partial v_3}{\partial x}\right)
+ \frac{\partial}{\partial x}\left(\frac{v_1^2+v_3^2}{2}\right)
+ c_p \theta \frac{\partial \Pi}{\partial x}
+ \frac{\partial \phi}{\partial x}
=0,\\
&\frac{\partial v_3}{\partial t}
+ v_1\left(\frac{\partial v_3}{\partial x}-\frac{\partial v_1}{\partial z}\right)
+ \frac{\partial}{\partial z}\left(\frac{v_1^2+v_3^2}{2}\right)
+ c_p \theta \frac{\partial \Pi}{\partial z}
+ \frac{\partial \phi}{\partial z}
=0,\\
&\frac{\partial (\varrho\theta)}{\partial t}
+ \frac{\partial (\varrho\theta v_1)}{\partial x}
+ \frac{\partial (\varrho\theta v_3)}{\partial z}
=0.
  \end{aligned}
  \label{eq:vector_invariant_form_2d}
\end{equation}
With the set of prognostic variables $\vec{u} = (\varrho, v_1, v_3, \varrho \theta)$, the system can be cast in the compact form
\begin{equation}
\partial_t \vec{u} + \sum_{j} \partial_{x_j}\vec{f}^j(\vec{u}) + \sum_{j} \vec{H}^j(\vec{u}) \partial_{x_j}\vec{g}^j(\vec{u}) = \vec{0},
\label{eq:nonconservative_form}
\end{equation}
where $\vec{f}^j$ represents the conservative fluxes and $\vec{H}^j \partial_{x_j} \vec{g}^j$ the nonconservative products, namely the vorticity contributions, the pressure term $c_p \theta \partial_{x_j} \Pi$ and the geopotential term $\partial_{x_j} \phi$, with the following definitions
\begin{equation}
    \label{eq:vector_invariant_form_fluxes}
 \vec{f}^{1} =
 \begin{pmatrix}
    \varrho v_1 \\
    K \\
    0 \\
    \varrho \theta v_1
\end{pmatrix}, \quad
\vec{f}^{3} =
\begin{pmatrix}
    \varrho v_3 \\
    0 \\
    K \\
    \varrho \theta v_3
\end{pmatrix},
 \end{equation}

 \begin{equation}
    \label{eq:vector_invariant_form_nonconservative}
\vec{H}^{1} =
 \begin{pmatrix}
    0    & 0          & 0 \\
    -v_3 & c_p \theta & 1 \\
    v_1  & 0          & 0 \\
    0    & 0          & 0
\end{pmatrix}, \quad
\vec{g}^{1} =
\begin{pmatrix}
v_3 \\
\Pi \\
\phi
\end{pmatrix}, \quad
\vec{H}^{3} =
\begin{pmatrix}
    0    & 0          & 0 \\
    v_3  & 0          & 0 \\
    -v_1 & c_p \theta & 1 \\
    0    & 0          & 0
\end{pmatrix}, \quad
 \vec{g}^{3} =
\begin{pmatrix}
v_1 \\
\Pi \\
\phi
\end{pmatrix},
\end{equation}
where $K = \frac{1}{2} \|\vec{V}\|^2$.

We introduce some standard two-point notation that we will use throughout the study:
\begin{equation}
\label{eq:properties_discrete_mean}
\begin{aligned}
	\jump{a} &= a_{+} - a_{-},                 &\qquad& (\text{jump})\\
	\mean{a} &= \tfrac{1}{2}(a_{+} + a_{-}),       &\qquad& (\text{arithmetic mean})\\
	\mean{a}_{\mathrm{log}} &= \jump{a}/\jump{\log a}, &\qquad& (\text{logarithmic mean, cf.~\cite{ismail2009affordable}})\\
	\mean{a}_\gamma &= \tfrac{\gamma-1}{\gamma}\jump{a^{\gamma}}/\jump{a^{\gamma-1}}, &\qquad& (\text{Stolarsky mean, cf.~\cite{Winters2020}})\\
    \mean{a}_{\mathrm{geo}} &= \sqrt{a_{+} a_-}, &\qquad& (\text{geometric mean}),
\end{aligned}
\end{equation}
with the discrete product rules $\jump{ab} = \mean{a}\jump{b} + \mean{b}\jump{a}$, $\jump{a^2} = 2\jump{a}\mean{a}$.

To derive the entropy-conservative/stable methods, we follow the approach presented in~\cite{artiano2026affordable}, which is based on a specialization of the fluctuation form of the nonconservative terms (see, e.g.,~\cite{Castro2013,RENAC20191}).
Among the forms proposed therein, we adopt the third, which is the most practical, as it offers enough flexibility to derive entropy-conservative fluxes for most nonconservative hyperbolic systems, while keeping the construction of novel entropy-stable/conservative schemes straightforward.
Following~\cite{artiano2026affordable}, a semidiscrete finite volume (FV) method for~\eqref{eq:nonconservative_form} on a uniform Cartesian grid reads
\begin{equation}
\partial_t \vec{u}_i
+ \sum_{j} \frac{1}{\Delta x_j}\left (\vec{f}^{\mathrm{num},j}_{i+1/2} - \vec{f}^{\mathrm{num},j}_{i-1/2} \right )
+ \sum_{j} \frac{1}{2 \Delta x_j} \left ( \vec{H}^{\mathrm{num},j}_{i+1/2}\jump{\vec{g}^j}_{i+1/2} + \vec{H}^{\mathrm{num},j}_{i-1/2}\jump{\vec{g}^j}_{i-1/2} \right ) = \vec{0}.
\label{eq:fv_nonconservative_form}
\end{equation}
The numerical counterparts $\vec{f}^{\mathrm{num},j}$, $\vec{H}^{\mathrm{num},j}$, and $\vec{g}^j$ follow naturally from~\eqref{eq:vector_invariant_form_fluxes} and~\eqref{eq:vector_invariant_form_nonconservative}, respectively.
Note that the gradient of the potential is treated as a nonconservative term, although it could be discretized in divergence form.
This choice is made purely for convenience, since it simplifies the derivation of the numerical fluxes.
Nevertheless, the derivation can be carried out equivalently by treating the geopotential as a conservative term with central numerical flux.

The scheme~\eqref{eq:fv_nonconservative_form} is uniquely determined once the numerical fluxes are specified.
Thus, we are looking for numerical fluxes that conserve the entropy functionals $U_{\varrho s}$ and $U_{\varrho E}$.
It is important to note that, when expressed in terms of the variables $(\varrho, v_1, v_3, \varrho \theta)$, these functionals lose strict convexity.
The thermodynamic entropy remains convex but not strictly~\cite{artiano2025structurepreservinghighordermethodscompressible}, whereas the total energy is not convex, since the Hessian is indefinite if the velocity is non-zero.
Indeed, considering the vector-invariant form in one dimension, the Hessians of the two mathematical entropies read
\begin{equation}
	 U_{\varrho E}''(\vec{u}) = \begin{bmatrix}
	    0        & v & 0           \\
		v                             & \varrho & 0                                         \\
		0 & 0 & \gamma R \frac{\Pi}{\varrho \theta}
	\end{bmatrix}, \quad  U_{\varrho s}''(\vec{u}) = \frac{\gamma}{\gamma-1}\begin{bmatrix}
		\frac{1}{\varrho}         & 0 & -\frac{1}{\varrho \theta}            \\
		0                              & 0 & 0                                         \\
		-\frac{1}{\varrho \theta} & 0 & \frac{ \varrho}{(\varrho \theta)^2}
	\end{bmatrix},
\end{equation}
with eigenvalues
\begin{equation}
         \lambda_{\varrho E, 1,2} = \frac{\varrho \pm \sqrt{\varrho^2 + 4 v^2}}{2}, \quad
         \lambda_{\varrho E, 3} = \gamma R \frac{\Pi}{\varrho \theta},
\end{equation}
\begin{equation}
	 \lambda_{\varrho s, 1} = \lambda_{\varrho s, 2} = 0, \quad \lambda_{\varrho s, 3} = \frac{\gamma}{\gamma-1}\left ( \frac{1}{\varrho} + \frac{\varrho}{(\varrho \theta)^2} \right).
\end{equation}
The loss of strict convexity implies that Barth's theorem~\cite{Barth1999} cannot be applied to uniquely determine a dissipation operator guaranteeing entropy stability~\cite{WARUSZEWSKI2022111507, WINTERS2017274}.
However, a generalized Tadmor condition remains both necessary and sufficient for entropy conservation/stability, regardless of whether a mathematical entropy is convex (see the algebraic characterization in~\cite{artiano2026affordable}), so that entropy-stable schemes can still be obtained by enforcing this condition.
Moreover, we show that it can be satisfied in a simple way for any discretization of the compressible Euler equations that uses the density and the potential temperature (in conservative form) as prognostic variables, independently of how the momentum equation is discretized.

For completeness, we report here the relevant result for the semidiscretization~\eqref{eq:fv_nonconservative_form}, which corresponds to the third form of~\cite{artiano2026affordable} and extends the classical result of Tadmor~\cite{Tadmor1987}.
\begin{theorem}[Theorem~3.10 of~\cite{artiano2026affordable}]
\label{th:tadmor_form3}
    The FV scheme~\eqref{eq:fv_nonconservative_form} is entropy-conservative/stable for a given entropy pair $(U, \vec{F})$ if and only if
\begin{equation}
\label{eq:TadmorDef_form3}
	\jump{\vec{\omega}} \cdot \vec{f}^{\mathrm{num},j} -\mean{\vec{\omega}} \cdot \vec{H}^{\mathrm{num},j}\jump{\vec{g}^j} \leq \jump{\psi^j}
\end{equation}
for all $j$, where $\vec{\omega} = U'$ are the entropy variables and $\psi^j = \vec{\omega} \cdot \vec{f}^j - F^j$ is the entropy potential.
\end{theorem}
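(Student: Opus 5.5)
We would prove the statement by computing the semidiscrete time derivative of the total entropy $\sum_i \Delta x\, U(\vec{u}_i)$ and reducing it, interface by interface, to the condition~\eqref{eq:TadmorDef_form3}. It suffices to treat a single direction $j$ in one dimension with $\Delta x_j = \Delta x$, since the contributions of the different directions add up independently on a Cartesian grid. By the chain rule,
\begin{equation*}
\frac{\mathrm{d}}{\mathrm{d}t} \sum_i \Delta x\, U(\vec{u}_i) = \sum_i \Delta x\, \vec{\omega}_i \cdot \partial_t \vec{u}_i .
\end{equation*}
Inserting~\eqref{eq:fv_nonconservative_form} then gives, up to the factor $-1$, the two sums $\sum_i \vec{\omega}_i \cdot (\vec{f}^{\mathrm{num}}_{i+1/2} - \vec{f}^{\mathrm{num}}_{i-1/2})$ and $\tfrac12 \sum_i \vec{\omega}_i \cdot (\vec{H}^{\mathrm{num}}_{i+1/2}\jump{\vec{g}}_{i+1/2} + \vec{H}^{\mathrm{num}}_{i-1/2}\jump{\vec{g}}_{i-1/2})$. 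Convexity of $U$ is not used anywhere, which is consistent with the remark preceding the theorem.

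The key step is a summation by parts (index shift) that regroups everything as interface contributions at $i+1/2$, each involving only the states $\vec{u}_- = \vec{u}_i$ and $\vec{u}_+ = \vec{u}_{i+1}$.
\begin{itemize}
\item For the conservative part, $\sum_i \vec{\omega}_i\cdot(\vec{f}^{\mathrm{num}}_{i+1/2}-\vec{f}^{\mathrm{num}}_{i-1/2}) = -\sum_{i} \jump{\vec{\omega}}_{i+1/2}\cdot \vec{f}^{\mathrm{num}}_{i+1/2}$, up to boundary terms.
\item For the nonconservative part, the interface $i+1/2$ receives $\tfrac12(\vec{\omega}_i + \vec{\omega}_{i+1}) \cdot \vec{H}^{\mathrm{num}}_{i+1/2}\jump{\vec{g}}_{i+1/2} = \mean{\vec{\omega}}\cdot \vec{H}^{\mathrm{num}}\jump{\vec{g}}$. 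This is exactly why the symmetric splitting with factor $1/2$ in~\eqref{eq:fv_nonconservative_form} produces the arithmetic mean.
\end{itemize}
Hence
\begin{equation*}
\frac{\mathrm{d}}{\mathrm{d}t}\sum_i \Delta x\, U_i = \sum_i \left( \jump{\vec{\omega}}\cdot\vec{f}^{\mathrm{num}} - \mean{\vec{\omega}}\cdot \vec{H}^{\mathrm{num}}\jump{\vec{g}} \right)_{i+1/2} + \text{boundary terms}.
\end{equation*}

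Next, we write $\jump{\psi} = \jump{\vec\omega\cdot\vec f - F}$ and note that $\sum_i \jump{F}_{i+1/2}$ telescopes to boundary fluxes of $F$. The semidiscrete entropy production therefore equals the sum over interfaces of $\jump{\vec{\omega}}\cdot\vec{f}^{\mathrm{num}} - \mean{\vec{\omega}}\cdot\vec{H}^{\mathrm{num}}\jump{\vec{g}} - \jump{\psi}$, plus the telescoped flux terms that define conservation/stability in the discrete sense. In fact, on each cell the local update can be written in the form
\begin{equation*}
\partial_t U_i + \frac{F^{\mathrm{num}}_{i+1/2}-F^{\mathrm{num}}_{i-1/2}}{\Delta x} \le 0 .
\end{equation*}
Here the numerical entropy flux $F^{\mathrm{num}} = \mean{\vec{\omega}}\cdot\vec{f}^{\mathrm{num}} - \mean{\psi} + \tfrac14\jump{\vec\omega}\cdot\vec H^{\mathrm{num}}\jump{\vec g}$ is a candidate that still has to be checked for consistency with $F$, using consistency of $\vec{f}^{\mathrm{num}}$ and the fact that $\jump{\vec g}$ vanishes for equal states. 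Sufficiency of~\eqref{eq:TadmorDef_form3} is then immediate.

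For necessity, we argue by contradiction. Suppose~\eqref{eq:TadmorDef_form3} fails for some pair of states $\vec{u}_\pm$. We choose data on a periodic grid whose only nontrivial interfaces carry this pair, for instance a two-state configuration, so that the production at those interfaces is positive and the others contribute nothing. The main obstacle is making this localization rigorous, because in a periodic two-state arrangement each jump appears with both orientations. We would handle this either by using the local cell entropy inequality with a prescribed numerical entropy flux, which is the definition used in~\cite{artiano2026affordable}, or by noting that the condition is required for all ordered pairs. Since this is Theorem~3.10 of~\cite{artiano2026affordable}, we would follow that definition for the precise necessity direction.
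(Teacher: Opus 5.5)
Your argument for the ``only if'' direction is missing: you defer it rather than prove it. Under a global definition, your periodic two-state configuration only gives $r(\vec{u}_-,\vec{u}_+) + r(\vec{u}_+,\vec{u}_-) \le 0$ for the interface production $r = \jump{\vec{\omega}}\cdot\vec{f}^{\mathrm{num}} - \mean{\vec{\omega}}\cdot\vec{H}^{\mathrm{num}}\jump{\vec{g}} - \jump{\psi}$. Your alternative, ``the condition is required for all ordered pairs'', just restates the claim.

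The missing idea is to use the local definition and squeeze the unknown $F^{\mathrm{num}}_{i+1/2}$ from both sides. The local definition is a cell entropy inequality $\Delta x\,\partial_t U_i + F^{\mathrm{num}}_{i+1/2} - F^{\mathrm{num}}_{i-1/2} \le 0$ with \emph{some} consistent two-point entropy flux, not necessarily yours. Choose data whose outer interfaces are trivial: $\vec{u}_{i-1} = \vec{u}_i = \vec{u}_-$ and $\vec{u}_{i+1} = \vec{u}_{i+2} = \vec{u}_+$.

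In cell $i$, consistency gives $\vec{f}^{\mathrm{num}}_{i-1/2} = \vec{f}(\vec{u}_i)$, $\jump{\vec{g}}_{i-1/2} = \vec{0}$ and $F^{\mathrm{num}}_{i-1/2} = F(\vec{u}_i)$. The cell inequality therefore becomes $F^{\mathrm{num}}_{i+1/2} \le \vec{\omega}_i\cdot\vec{f}^{\mathrm{num}}_{i+1/2} - \psi_i + \tfrac12\vec{\omega}_i\cdot\vec{H}^{\mathrm{num}}_{i+1/2}\jump{\vec{g}}_{i+1/2}$. In cell $i+1$ you obtain in the same way $F^{\mathrm{num}}_{i+1/2} \ge \vec{\omega}_{i+1}\cdot\vec{f}^{\mathrm{num}}_{i+1/2} - \psi_{i+1} - \tfrac12\vec{\omega}_{i+1}\cdot\vec{H}^{\mathrm{num}}_{i+1/2}\jump{\vec{g}}_{i+1/2}$.

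Compatibility of these two bounds is exactly~\eqref{eq:TadmorDef_form3}, with equality in the conservative case. Data constant in the other coordinate directions isolate each $j$. This uses only the inequalities of two adjacent cells, which is what removes your ``both orientations'' obstacle.

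Your sufficiency computation is correct at the global level: the index shift and the telescoping of $\sum_i\jump{\psi}_{i+1/2}$ are right. The local entropy flux, however, has a sign error. Write $\vec{\omega}_i = \mean{\vec{\omega}}_{i+1/2} - \tfrac12\jump{\vec{\omega}}_{i+1/2} = \mean{\vec{\omega}}_{i-1/2} + \tfrac12\jump{\vec{\omega}}_{i-1/2}$. Then the identity $\Delta x\,\partial_t U_i + F^{\mathrm{num}}_{i+1/2} - F^{\mathrm{num}}_{i-1/2} = \tfrac12\left( r_{i+1/2} + r_{i-1/2} \right)$ holds for $F^{\mathrm{num}} = \mean{\vec{\omega}}\cdot\vec{f}^{\mathrm{num}} - \mean{\psi} - \tfrac14\jump{\vec{\omega}}\cdot\vec{H}^{\mathrm{num}}\jump{\vec{g}}$. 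With your $+\tfrac14$, the sign-indefinite remainder $\tfrac12\bigl(\jump{\vec{\omega}}\cdot\vec{H}^{\mathrm{num}}\jump{\vec{g}}\bigr)_{i+1/2} - \tfrac12\bigl(\jump{\vec{\omega}}\cdot\vec{H}^{\mathrm{num}}\jump{\vec{g}}\bigr)_{i-1/2}$ survives, so the cell entropy inequality would not follow from~\eqref{eq:TadmorDef_form3}. Consistency of the corrected flux with $F$ is immediate, since both jumps vanish for equal states.
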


In the next two sections, we follow an analysis similar to that of~\cite{artiano2025structurepreservinghighordermethodscompressible} to derive the total energy and the thermodynamic entropy numerical fluxes, using the generalized Tadmor condition~\eqref{eq:TadmorDef_form3}.

\subsection{Total energy conservation}
\label{sec:tec}
We first use the total energy $U_{\varrho E}$ as the mathematical entropy.
Since the analysis decouples in the two spatial directions, we carry out the derivations in the $x_1$-direction; the $x_3$-direction follows analogously.
The entropy variables and the entropy potential in the $x_1$-direction associated with $(\varrho, v_1, v_3, \varrho \theta)$ are
\begin{equation}
\vec{\omega}_{\varrho E} = \left ( K + \phi,\; \varrho v_1,\; \varrho v_3,\; c_p \Pi \right ), \qquad
\psi^1_{\varrho E} = \varrho K v_1, \qquad K = \tfrac{1}{2}(v_1^2 + v_3^2).
\label{eq:tec_entropy_variables}
\end{equation}
Inserting~\eqref{eq:tec_entropy_variables} into the condition~\eqref{eq:TadmorDef_form3} with equality, we obtain
\begin{multline}
\jump{K}f^{\varrho,1} + \jump{\phi}f^{\varrho,1} + \jump{\varrho v_1} f^{v_1,1} + c_p\jump{\Pi} f^{\varrho \theta,1}
\\
+\mean{\varrho v_1} v_3^{\mathrm{num},1} \jump{v_3} - \mean{\varrho v_3} v_1^{\mathrm{num},1} \jump{v_3}
-\mean{\varrho v_1}\left ( c_p\,\theta^{\mathrm{num},1} \jump{\Pi} + \jump{\phi} \right )
= \jump{\varrho v_1 K},
\label{eq:tec_condition}
\end{multline}
where $\theta^{\mathrm{num},1}$ denotes the discretization of $\theta$ in the nonconservative pressure product $c_p\theta\partial_x\Pi$, and the term $\mean{\varrho v_1}\jump{\phi}$ stems from the nonconservative geopotential gradient. Expanding the right-hand side with the discrete chain rule and matching the coefficients of the independent jumps $\jump{K}$, $\jump{\phi}$, $\jump{\varrho v_1}$, and $\jump{\Pi}$ yields the following result.
    \begin{theorem}
    \label{th:tec}
    A numerical flux for the compressible Euler equations in vector-invariant form~\eqref{eq:vector_invariant_form_2d} satisfying
    \begin{equation}
        \label{eq:tec_fluxes}
        \begin{aligned}
        f^{\varrho,j} &= \mean{\varrho v_j}, \\
        f^{v_j,j} &= \mean{K}, \\
        f^{\varrho \theta, j} &= \mean{\varrho v_j}\, \theta^{\mathrm{num}, j},\\
        v_i^{\mathrm{num},j} &= \frac{\mean{\varrho v_i}}{\mean{\varrho}},
        \end{aligned}
    \end{equation}
    where $\theta^{\mathrm{num},j}$ is any consistent discretization of $\theta$, is total-energy-conservative (TEC).
\end{theorem}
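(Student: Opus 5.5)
By Theorem~\ref{th:tadmor_form3}, the scheme~\eqref{eq:fv_nonconservative_form} conserves the total energy if and only if~\eqref{eq:TadmorDef_form3} holds with equality for $(U_{\varrho E},\vec{F}_{\varrho E})$ in each direction $j$. We therefore only need to verify identity~\eqref{eq:tec_condition} for the fluxes~\eqref{eq:tec_fluxes}. We do this in the $x_1$-direction; the $x_3$-direction follows by exchanging the roles of $v_1$ and $v_3$ in $\vec{H}^3$, $\vec{g}^3$.

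The first step is to expand the right-hand side with the discrete product rule, $\jump{\varrho v_1 K} = \mean{\varrho v_1}\jump{K} + \mean{K}\jump{\varrho v_1}$. The left-hand side is then grouped by jumps.

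\begin{itemize}
\item The $\jump{\phi}$ terms are $f^{\varrho,1}\jump{\phi} - \mean{\varrho v_1}\jump{\phi}$. They cancel because $f^{\varrho,1}=\mean{\varrho v_1}$.
\item The $\jump{\Pi}$ terms are $c_p\jump{\Pi}\bigl(f^{\varrho\theta,1} - \mean{\varrho v_1}\theta^{\mathrm{num},1}\bigr)$. They cancel by the choice of $f^{\varrho\theta,1}$, for any consistent $\theta^{\mathrm{num},1}$.
\item The $\jump{\varrho v_1}$ terms are $f^{v_1,1}\jump{\varrho v_1}$ on the left and $\mean{K}\jump{\varrho v_1}$ on the right. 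They match since $f^{v_1,1}=\mean{K}$.
\end{itemize}

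What remains is to show that
\begin{equation*}
\mean{\varrho v_1}\jump{K} + \jump{\varrho v_3}\,f^{v_3,1} + \bigl(\mean{\varrho v_1}v_3^{\mathrm{num},1} - \mean{\varrho v_3}v_1^{\mathrm{num},1}\bigr)\jump{v_3} = \mean{\varrho v_1}\jump{K},
\end{equation*}
where $f^{v_3,1}=0$ by~\eqref{eq:vector_invariant_form_fluxes}. Note that $\jump{\varrho v_3}$ in~\eqref{eq:tec_condition} multiplies the zero flux component, so it drops out. The claim thus reduces to showing that the vorticity contribution vanishes:
\begin{equation*}
\mean{\varrho v_1}\,v_3^{\mathrm{num},1} - \mean{\varrho v_3}\,v_1^{\mathrm{num},1} = 0 .
\end{equation*}

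This step is the crux, since the antisymmetric vorticity terms must cancel exactly at the discrete level. Inserting $v_i^{\mathrm{num},1}=\mean{\varrho v_i}/\mean{\varrho}$ gives
\begin{equation*}
\mean{\varrho v_1}\,\frac{\mean{\varrho v_3}}{\mean{\varrho}} - \mean{\varrho v_3}\,\frac{\mean{\varrho v_1}}{\mean{\varrho}} = 0 .
\end{equation*}
This confirms that the choice in~\eqref{eq:tec_fluxes} is exactly what makes the two antisymmetric products cancel. Both expressions are also consistent, since they reduce to $v_i$ when $u_+=u_-$.

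The main care needed is bookkeeping. One must check that the signs of the $\vec{H}^1$ entries (namely $-v_3$ in the $v_1$ row and $v_1$ in the $v_3$ row) produce exactly the two vorticity terms written in~\eqref{eq:tec_condition}, with $\mean{\vec{\omega}}$ contributing $\mean{\varrho v_1}$ and $\mean{\varrho v_3}$ respectively.

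With all jump coefficients matched, equality in~\eqref{eq:TadmorDef_form3} holds, and Theorem~\ref{th:tadmor_form3} yields total-energy conservation.
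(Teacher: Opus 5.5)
Your proof is correct and follows the paper's own argument. You insert the total-energy entropy variables and potential into the generalized Tadmor condition with equality, split $\jump{\varrho v_1 K}$ by the discrete product rule, and cancel the $\jump{K}$, $\jump{\phi}$, $\jump{\Pi}$ and $\jump{\varrho v_1}$ groups; the vorticity terms then vanish because $\mean{\varrho v_1}v_3^{\mathrm{num}} = \mean{\varrho v_3}v_1^{\mathrm{num}}$, which is exactly the paper's remark. Your explicit use of $f^{v_3,1}=0$ is an assumption the paper also makes, tacitly, when it writes the condition.
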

\begin{remark}
  Note that the condition to derive a TEC flux for $v_i^{\mathrm{num},j}$ in the $x_1$-direction decouples into
  \begin{equation}
  \mean{\varrho v_1} v_3^{\mathrm{num}} = \mean{\varrho v_3} v_1^{\mathrm{num}},
  \end{equation}
  which can be satisfied in many different ways, e.g., by $v_i^{\mathrm{num},j} = \mean{\varrho v_i} / \varrho^{\mathrm{num}}$, where $\varrho^{\mathrm{num}}$ is any consistent discretization of the density.
  We adopt the simplest choice $\varrho^{\mathrm{num}} = \mean{\varrho}$, since we found no reason to introduce a more expensive mean value.
\end{remark}

\begin{remark}
The scheme~\eqref{eq:tec_fluxes} uses a different momentum discretization from the energy-conservative scheme proposed by Ricardo et al.~\cite{ricardo2024thermodynamicconsistencystructurepreservationsummation}:
here the skew-symmetric vorticity term is discretized by means of nonconservative split forms~\cite{artiano2026affordable}, and the scheme of Ricardo et al.\ is recovered when the vorticity term $ \left ( \nabla \times \vec{V} \right ) \times \vec{V}$ is discretized pointwise (i.e., evaluated at the right interface as $\frac{\vec{H}_i}{2} \jump{\vec{g}}_{i+1/2}$) and the potential temperature numerical flux is the central one.
The freedom to choose the potential-temperature numerical flux in the present generalization, in contrast to the work of~\cite{ricardo2024thermodynamicconsistencystructurepreservationsummation}, plays a key role when deriving a method that conserves the entropy- as well as the total energy and guarantees preservation of isothermal and isentropic background states, as shown later in Section~\ref{sec:well_balanced}.
\end{remark}

\subsection{Thermodynamic entropy conservation}
\label{sec:ec}
We now look for numerical fluxes that conserve the thermodynamic entropy $U_{\varrho s}$. The associated entropy variables are
\begin{equation}
\vec{\omega}_{\varrho s} = \left (-\frac{\log(p/\varrho^\gamma) - \gamma}{\gamma-1},\; 0,\; 0,\; - \frac{\gamma}{\gamma-1} \frac{\varrho}{\varrho \theta} \right ),
\label{eq:ec_entropy_variables}
\end{equation}
and the entropy potential vanishes, $\psi^j_{\varrho s} = 0$.
The velocity components of the entropy variables $\vec{\omega}_{\varrho s}$ vanish and the mass and potential-temperature equations coincide with those of the conservative formulation~\cite{artiano2025structurepreservinghighordermethodscompressible}.
Thus, the condition~\eqref{eq:TadmorDef_form3} reduces to the same entropy conservation condition as in~\cite[Eq.~(26)]{artiano2025structurepreservinghighordermethodscompressible}, which reads
\begin{equation}
\jump{\log \left (1/\theta \right )} f^{\varrho,j} - \jump{1/\theta}\, f^{\varrho \theta,j} = 0.
\label{eq:ec_condition}
\end{equation}
The condition~\eqref{eq:ec_condition} involves only the density and potential-temperature fluxes, and the density flux remains a degree of freedom. Solving for the potential-temperature flux to avoid the influence of the pressure in the density flux (see~\cite{artiano2025structurepreservinghighordermethodscompressible}) gives the following result.
\begin{theorem}
\label{th:ec}
A numerical flux for the compressible Euler equations in vector-invariant form~\eqref{eq:vector_invariant_form_2d} satisfying
\begin{equation}
    \begin{aligned}
\label{eq:ec_fluxes}
f^{\varrho,j} &= \text{any consistent flux of } \varrho v_j,\\
f^{\varrho \theta,j} &= \frac{1}{\mean{1/\theta}_{\mathrm{log}}}\, f^{\varrho,j},
    \end{aligned}
\end{equation}
is entropy-conservative (EC).
\end{theorem}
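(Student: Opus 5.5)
We apply the generalized Tadmor condition of Theorem~\ref{th:tadmor_form3} with equality to the entropy pair $(U_{\varrho s}, \vec{F}_{\varrho s})$, direction by direction. The argument is the same in each direction $j$, so we work with a generic $j$.

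First we compute the terms in~\eqref{eq:TadmorDef_form3}, using the entropy variables~\eqref{eq:ec_entropy_variables} and $\psi^j_{\varrho s}=0$.

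\emph{Nonconservative term.} The second and third components of $\vec{\omega}_{\varrho s}$ vanish. Moreover, the nonconservative matrices $\vec{H}^j$ in~\eqref{eq:vector_invariant_form_nonconservative} have nonzero entries only in the two velocity rows. We assume $\vec{H}^{\mathrm{num},j}$ inherits this sparsity pattern, which holds for any consistent discretization built from~\eqref{eq:vector_invariant_form_nonconservative}. Then
\[
\mean{\vec{\omega}_{\varrho s}}\cdot\vec{H}^{\mathrm{num},j}\jump{\vec{g}^j}=0,
\]
independently of how $v_i^{\mathrm{num},j}$ and $\theta^{\mathrm{num},j}$ are chosen.

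\emph{Conservative term.} Only the density and potential-temperature components survive:
\[
\jump{\omega_1}\, f^{\varrho,j}+\jump{\omega_4}\, f^{\varrho\theta,j}.
\]
Since $s=\log(p/\varrho^\gamma)$, we can rewrite $s$ using $p=p_0(\varrho\theta R/p_0)^\gamma$:
\[
s=\gamma\log\theta+\text{const}.
\]
The additive constants cancel in the jump, so
\[
\jump{\omega_1}=-\tfrac{\gamma}{\gamma-1}\jump{\log\theta}=\tfrac{\gamma}{\gamma-1}\jump{\log(1/\theta)}.
\]
Since $\varrho/(\varrho\theta)=1/\theta$,
\[
\jump{\omega_4}=-\tfrac{\gamma}{\gamma-1}\jump{1/\theta}.
\]
Dividing by the nonzero factor $\gamma/(\gamma-1)$, the condition~\eqref{eq:TadmorDef_form3} with equality becomes exactly~\eqref{eq:ec_condition}.

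Next we insert the proposed flux $f^{\varrho\theta,j}=f^{\varrho,j}/\mean{1/\theta}_{\mathrm{log}}$. By the definition of the logarithmic mean,
\[
\mean{1/\theta}_{\mathrm{log}}=\jump{1/\theta}/\jump{\log(1/\theta)},
\]
so
\[
\jump{1/\theta}\,f^{\varrho\theta,j}=\jump{\log(1/\theta)}\,f^{\varrho,j}.
\]
Hence~\eqref{eq:ec_condition} holds identically for an arbitrary density flux $f^{\varrho,j}$. Theorem~\ref{th:tadmor_form3} then gives entropy conservation.

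It remains to check that the flux is well defined and consistent. When $\theta_+=\theta_-$, the ratio defining the logarithmic mean is of the form $0/0$. In that case $\mean{\cdot}_{\mathrm{log}}$ is understood as its continuous extension, which equals the common value. Both sides of the identity above then vanish, so the condition still holds, and $f^{\varrho\theta,j}$ reduces to $\theta\,f^{\varrho,j}$, which is consistent with $\varrho\theta v_j$.

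The main obstacle is the justification that the nonconservative term drops out. This rests entirely on the zero rows of $\vec{H}^{\mathrm{num},j}$ in the mass and potential-temperature equations, so that step should be stated explicitly rather than taken for granted.
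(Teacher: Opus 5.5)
Your proof is correct and follows essentially the same route as the paper: you apply the generalized Tadmor condition of Theorem~\ref{th:tadmor_form3} with the thermodynamic entropy variables, note that the vanishing velocity components remove both the momentum fluxes and the nonconservative terms (which appear only in the velocity rows), reduce everything to condition~\eqref{eq:ec_condition}, and then solve that condition using the definition of the logarithmic mean. The only differences are that the paper takes the reduction from the conservative-formulation analysis, whereas you compute the jumps $\jump{\omega_1}$ and $\jump{\omega_4}$ explicitly, and that you also check consistency in the limit $\theta_+=\theta_-$.
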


\begin{remark}
This result coincides with the one obtained in~\cite{artiano2025structurepreservinghighordermethodscompressible}. Clearly, the choice of formulation and of momentum discretization plays no role, and the result holds equally for the conservative variables with momentum $\varrho v_j$.
\end{remark}

The degrees of freedom in the EC and TEC numerical fluxes can be combined to construct a numerical flux that conserves the total energy and entropy. A full characterization of such a numerical flux is given in Section~\ref{sec:well_balanced}, where we prove well-balancedness properties.

\section{Entropy stability}
\label{sec:entropy_stability}
Usually, entropy-stable methods are obtained by adding proper dissipation to an EC flux, so that~\eqref{eq:ec_condition} is satisfied as an inequality.
Here, we propose a simple modification of the EC flux~\eqref{eq:ec_fluxes} that yields an entropy-stable flux and also guarantees positivity of the pressure, as shown in Section~\ref{sec:positivity}.

\begin{theorem}
\label{th:es}
A numerical flux for the compressible Euler equations in vector-invariant form~\eqref{eq:vector_invariant_form_2d} satisfying
\begin{equation}
    \begin{aligned}
f^{\varrho\theta,j} &= f^{\varrho,j}\,\theta^{\mathrm{num}, j}, \\
\theta^{\mathrm{num},j} &= \frac{1}{\mean{1/\theta}_{\mathrm{log}}} - \frac{\theta^{\mathrm{min}}}{2\,\mean{\theta}}\jump{\theta}\,\operatorname{sign}\!\left(f^{\varrho,j}\right),
\label{eq:es_flux}
    \end{aligned}
\end{equation}
where $f^{\varrho,j}$ is any consistent numerical flux of the density, is entropy-stable (ES). Moreover, $\theta^{\mathrm{num},j} \in~[\theta_{\min},\theta_{\max}]$ with $\theta^{\max}_{i+1/2} = \max(\theta_i,\theta_{i+1})$ and $\theta^{\mathrm{min}}_{i+1/2} = \min(\theta_i,\theta_{i+1})$.
\end{theorem}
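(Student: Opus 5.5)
Since only the density and potential-temperature fluxes enter the entropy condition, Theorem~\ref{th:tadmor_form3} with $\psi^j_{\varrho s}=0$ reduces entropy stability to the inequality version of \eqref{eq:ec_condition}:
\begin{equation*}
\jump{\log(1/\theta)}\, f^{\varrho,j} - \jump{1/\theta}\, f^{\varrho\theta,j} \le 0 .
\end{equation*}
The plan is to insert the proposed flux $f^{\varrho\theta,j}=f^{\varrho,j}\theta^{\mathrm{num},j}$, split $\theta^{\mathrm{num},j}$ into the EC part $1/\mean{1/\theta}_{\mathrm{log}}$ and the correction term, and use that the EC part cancels the first term exactly. Indeed, $\jump{1/\theta}/\mean{1/\theta}_{\mathrm{log}}=\jump{\log(1/\theta)}$ by the definition of the logarithmic mean, as in Theorem~\ref{th:ec}.

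What remains is
\begin{equation*}
\jump{1/\theta}\,\frac{\theta^{\min}}{2\mean{\theta}}\,\jump{\theta}\,\operatorname{sign}(f^{\varrho,j})\, f^{\varrho,j}
= -\frac{\theta^{\min}\,\jump{\theta}^2}{2\mean{\theta}\,\theta_+\theta_-}\,\lvert f^{\varrho,j}\rvert ,
\end{equation*}
using $\jump{1/\theta}=-\jump{\theta}/(\theta_+\theta_-)$. This is nonpositive since all temperatures are positive, so the entropy inequality follows. I should double-check the sign convention of $\vec\omega_{\varrho s}$ in \eqref{eq:ec_entropy_variables}, which only affects an overall positive factor $\gamma/(\gamma-1)$, so that the inequality indeed has the stated direction.

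For the bound $\theta^{\mathrm{num}}\in[\theta^{\min},\theta^{\max}]$, I would normalise: let $r=\theta^{\max}/\theta^{\min}\ge1$ and scale by $\theta^{\min}$. The logarithmic mean of $1/\theta$ gives
\begin{equation*}
\frac{1}{\mean{1/\theta}_{\mathrm{log}}}=\theta^{\min}\,\frac{r\log r}{r-1}=:\theta^{\min}L(r),
\end{equation*}
which is the reciprocal of a mean of $1/\theta^{\min}$ and $1/\theta^{\max}$. The correction has magnitude
\begin{equation*}
\frac{\theta^{\min}\,\lvert\jump{\theta}\rvert}{2\mean{\theta}}=\theta^{\min}\,\frac{r-1}{r+1},
\end{equation*}
with sign $\pm$ depending on the upwind direction. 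So it suffices to show the two inequalities
\begin{equation*}
1\le L(r)-\frac{r-1}{r+1}\qquad\text{and}\qquad L(r)+\frac{r-1}{r+1}\le r
\end{equation*}
for all $r\ge1$.

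These one-variable inequalities are the main obstacle. I would try to reduce them to classical log bounds: $r\log r/(r-1)\ge 1+(r-1)/(r+1)=2r/(r+1)$ is equivalent to $\log r\ge 2(r-1)/(r+1)$, a standard inequality. The upper one is equivalent to $r\log r/(r-1)\le r-(r-1)/(r+1)=(r^2+1)/(r+1)$, i.e.\ $\log r\le (r^2+1)(r-1)/(r(r+1))$. This follows from $\log r\le (r-1)/\sqrt r$ (the log mean dominates the geometric mean) together with $\sqrt r\,(r^2+1)\ge r(r+1)$; the latter holds because $r^2+1\ge \sqrt r(r+1)$ by AM--GM-type estimates, or alternatively by checking the derivative of the difference.
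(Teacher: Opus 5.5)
Your proposal is correct and follows the paper's proof: the inverse-log-mean part cancels the same way, leaving $-\frac{\theta^{\min}\jump{\theta}^2}{2\mean{\theta}\,\theta_+\theta_-}\lvert f^{\varrho,j}\rvert\le 0$, and the bounds use the same harmonic/geometric sandwich of $1/\mean{1/\theta}_{\mathrm{log}}$, only written in the normalized variable $r=\theta^{\max}/\theta^{\min}$. The one difference is that you close the upper bound directly through $r^2+1\ge\sqrt r\,(r+1)$ (e.g.\ $(s-1)^2(s^2+s+1)\ge 0$ with $s=\sqrt r$), whereas the paper appeals to the monotonicity of means generated by an integral.
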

\begin{proof}
The entropy stability condition~\eqref{eq:ec_condition} becomes
\begin{equation}
\jump{\log \left (1/\theta \right )} f^{\varrho,j} - \jump{1/\theta}\, f^{\varrho \theta,j} \leq 0.
\end{equation}
Substituting the entropy-stable flux~\eqref{eq:es_flux} and using $\mean{1/\theta}_{\mathrm{log}} = \jump{1/\theta}/\jump{\log(1/\theta)}$, we obtain
\begin{multline}
f^{\varrho, j} \left ( \jump{\log(1/\theta)} - \jump{1/\theta}\frac{1}{\mean{1/\theta}_{\mathrm{log}}} + \jump{1/\theta}\frac{\theta^{\mathrm{min}}}{2\,\mean{\theta}}\jump{\theta}\,\operatorname{sign}\!\left(f^{\varrho,j}\right) \right )
\\
=
- \frac{\theta^{\mathrm{min}}}{2\,\mean{\theta}}\frac{\jump{\theta}^2}{\mean{\theta}_{\mathrm{geo}}^2} | f^{\varrho,j} | \leq 0,
\end{multline}
where we used
\begin{equation}
\jump{\frac{1}{\theta}} = - \frac{\jump{\theta}}{\mean{\theta}^2_{\mathrm{geo}}}.
\end{equation}
Now we show that the numerical flux is bounded by the minimum and maximum values at the interface.
First, we notice that the inverse of the logarithmic mean of $1/\theta$ is bounded by the harmonic and geometric means~\cite{MeansGenerated}
\begin{equation}
    \label{eq:log_bounds}
\frac{2 \theta^{\mathrm{min}} \theta^{\mathrm{max}}}{\theta^{\mathrm{max}} + \theta^{\mathrm{min}}} \leq \frac{1}{\mean{1/ \theta}_{\mathrm{log}}} \leq \sqrt{\theta^{\mathrm{min}} \theta^{\mathrm{max}}}.
\end{equation}
Moreover, we can rewrite $\theta^{\mathrm{num}} = \frac{1}{\mean{1/\theta}_{\mathrm{log}}} - S$, where $S = \frac{\theta^{\mathrm{min}}}{2 \mean{\theta}} \jump{\theta} \mathrm{sign}(f^{\varrho,j})$ and therefore
\begin{equation}
\frac{1}{\mean{1/\theta}_{\mathrm{log}}} - |S| \leq \theta^{\mathrm{num}} \leq \frac{1}{\mean{1/\theta}_{\mathrm{log}}} + |S|,
\end{equation}
where
\begin{equation}
|S| = \frac{\theta^{\mathrm{min}} ( \theta^{\mathrm{max}} - \theta^{\mathrm{min}})}{\theta^{\mathrm{max}} + \theta^{\mathrm{min}}} \geq 0.
\end{equation}
For the lower bound, using~\eqref{eq:log_bounds} we obtain
\begin{equation}
\frac{1}{\mean{1/\theta}_{\mathrm{log}}} - |S| \geq \frac{2 \theta^{\mathrm{min}} \theta^{\mathrm{max}}}{\theta^{\mathrm{max}} + \theta^{\mathrm{min}}} - \frac{\theta^{\mathrm{min}} ( \theta^{\mathrm{max}} - \theta^{\mathrm{min}})}{\theta^{\mathrm{max}} + \theta^{\mathrm{min}}} = \theta^{\mathrm{min}}.
\end{equation}
For the upper bound, we have
\begin{equation}
\frac{1}{\mean{1/\theta}_{\mathrm{log}}} + |S| \leq \sqrt{\theta^{\mathrm{max}} \theta^{\mathrm{min}}} + \frac{\theta^{\mathrm{min}} ( \theta^{\mathrm{max}} - \theta^{\mathrm{min}})}{\theta^{\mathrm{max}} + \theta^{\mathrm{min}}}.
\end{equation}
We have to prove the following
\begin{equation}
\sqrt{\theta^{\mathrm{max}} \theta^{\mathrm{min}}} +  \frac{\theta^{\mathrm{min}} ( \theta^{\mathrm{max}} - \theta^{\mathrm{min}})}{\theta^{\mathrm{max}} + \theta^{\mathrm{min}}} \leq \theta^{\mathrm{max}},
\end{equation}
which is equivalent to proving the following
\begin{equation}
\sqrt{a_{i+1} a_i} \leq \frac{a_{i+1}^2 + a_i^2}{a_{i+1} + a_i} = 2\mean{a} - \mean{a}_{\mathrm{har}},
\end{equation}
where
\begin{equation}
\mean{a}_{\mathrm{har}} = 2 \frac{a_i a_{i+1}}{a_{i+1}+a_i}
\end{equation}
is the harmonic mean.
Therefore, the proof boils down to the following inequality
\begin{equation}
\mean{a}_{\mathrm{geo}} \leq 2 \mean{a} - \mean{a}_{\mathrm{har}},
\end{equation}
which is satisfied by the strictly increasing monotonicity of the mean values generated by the integral~\cite{MeansGenerated}.
\end{proof}

\begin{remark}
    The simpler numerical flux for the potential temperature $\theta$
    \begin{equation}
        \theta^{\mathrm{num}} = \frac{1}{\mean{1/\theta}_{\mathrm{log}}} - \frac{\jump{\theta}}{2}\operatorname{sign}\!\left(f^{\varrho,j}\right)
    \end{equation}
    is also entropy-stable, but it does not guarantee that $\theta^{\mathrm{num}}$ remains bounded in the interval $[\theta^{\mathrm{min}}, \theta^{\mathrm{max}}]$.
    \end{remark}
\begin{remark}
The result above shows that, whenever the density and the potential temperature (in conservative form) are chosen as prognostic variables, entropy stability can be achieved simply by upwinding the potential temperature flux according to the inverse logarithmic mean law, independently of whether the conservative or the vector-invariant form is used.
A concrete choice of the density flux, which constitutes a degree of freedom here as it does for the entropy-conservative flux, is given in Section~\ref{sec:well_balanced}.
\end{remark}

\section{On the positivity of the density and pressure}
\label{sec:positivity}
In this section, we investigate the positivity of the pressure and density for a fully-discrete explicit scheme.
Although the main focus of the present study is on the spatial discretization, operational atmospheric models typically require (vertically) implicit time integrators (see, e.g.,~\cite{taylor2019,BALDAUF2021110635,giraldo2013numa,GIRALDO2024113275}).
The corresponding positivity analysis for vertically implicit schemes is left for future work.

Density and pressure must remain positive to avoid nonphysical states and instabilities~\cite{DERIGS2017624}. Since SSP Runge--Kutta methods are convex combinations of explicit Euler steps~\cite{Gottliebs2011} and our set of interest is also convex, it suffices to study the positivity of a single explicit Euler step applied to the first-order FV scheme.
The analysis can be extended to DG methods with similar arguments (see~\cite{ZHANG20108918,zhang2011maximum}).

We consider the convex set of admissible states
\begin{equation}
G = \left \{ \vec{u} = (\varrho, v, \varrho \theta) \;\middle|\; \varrho > 0,\; p = p_0 \left (\frac{\varrho \theta R}{p_0} \right )^\gamma > 0 \right \},
\end{equation}
where, for simplicity, we have considered the one-dimensional case, since the velocity components do not play a role in the positivity analysis of the density and the pressure.
\begin{theorem}
\label{th:positivity}
  Assume that all $\vec{u}_i^n \in G$.
  Consider an explicit Euler discretization of the FV scheme~\eqref{eq:fv_nonconservative_form} with flux $f^{\varrho \theta} = f^{\varrho} \theta^{\mathrm{num}}$, where $f^{\varrho} = \left (\mean{\varrho} -\frac{\jump{\varrho}}{2} \mathrm{sign}(v^{\mathrm{num}}) \right ) v^{\mathrm{num}}$, $v^{\mathrm{num}}$ is a consistent interface velocity, and $\theta^{\mathrm{num}} \in [\theta^{\min},\theta^{\max}]$ with $\theta^{\min} = \min(\theta_i,\theta_{i+1})$, and $\theta^{\max} = \max(\theta_i,\theta_{i+1})$.
  Then, the resulting scheme preserves the positivity of $\varrho\theta$, and hence of the pressure, under the Courant--Friedrichs--Lewy (CFL) condition
  \begin{equation}
      \Delta t < \frac{\Delta x}{2 \lambda}, \qquad
      \lambda \geq \max\left( v^{\mathrm{num},+}_{i+1/2}\,\frac{\theta^{\mathrm{max}}_{i+1/2}}{\theta_i},\;
      |v^{\mathrm{num},-}_{i-1/2}|\,\frac{\theta^{\mathrm{max}}_{i-1/2}}{\theta_i} \right),
      \label{eq:positivity_cfl}
  \end{equation}
  where $v^{\mathrm{num},+} = \max(v^{\mathrm{num}}, 0)$ and $v^{\mathrm{num},-} = \min(v^{\mathrm{num}}, 0)$ denote the positive and negative parts of the interface velocity.
\end{theorem}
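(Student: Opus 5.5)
We write out a single explicit Euler step for the $\varrho\theta$ equation of the FV scheme~\eqref{eq:fv_nonconservative_form} in one dimension. Since the $\varrho\theta$ row of $\vec{H}$ vanishes, only the conservative fluxes contribute:
\begin{equation*}
(\varrho\theta)_i^{n+1} = (\varrho\theta)_i^n - \frac{\Delta t}{\Delta x}\left( f^{\varrho}_{i+1/2}\theta^{\mathrm{num}}_{i+1/2} - f^{\varrho}_{i-1/2}\theta^{\mathrm{num}}_{i-1/2}\right).
\end{equation*}
The upwind form of $f^{\varrho}$ gives $f^{\varrho}_{i+1/2} = \varrho_i v^{\mathrm{num},+}_{i+1/2} + \varrho_{i+1} v^{\mathrm{num},-}_{i+1/2}$. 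The same expression holds at $i-1/2$. Substituting these, we sort the update into terms multiplying $(\varrho\theta)_i$, $\varrho_{i+1}$ and $\varrho_{i-1}$.

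The neighbour contributions come first. The term $-\frac{\Delta t}{\Delta x}\varrho_{i+1} v^{\mathrm{num},-}_{i+1/2}\theta^{\mathrm{num}}_{i+1/2}$ is nonnegative, since $v^{\mathrm{num},-}\le 0$, $\varrho_{i+1}>0$ and $\theta^{\mathrm{num}}\ge\theta^{\min}>0$. The term $+\frac{\Delta t}{\Delta x}\varrho_{i-1} v^{\mathrm{num},+}_{i-1/2}\theta^{\mathrm{num}}_{i-1/2}$ is likewise nonnegative. Positivity of $\theta^{\min}$ follows from $\vec{u}^n\in G$, because $\varrho>0$ and $\varrho\theta>0$ imply $\theta>0$. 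These terms can therefore be dropped in a lower bound.

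The remaining terms involve $\varrho_i$. We write $\varrho_i = (\varrho\theta)_i/\theta_i$ and obtain
\begin{equation*}
(\varrho\theta)_i^{n+1} \ge (\varrho\theta)_i\left(1 - \frac{\Delta t}{\Delta x}\, v^{\mathrm{num},+}_{i+1/2}\frac{\theta^{\mathrm{num}}_{i+1/2}}{\theta_i} - \frac{\Delta t}{\Delta x}\,|v^{\mathrm{num},-}_{i-1/2}|\frac{\theta^{\mathrm{num}}_{i-1/2}}{\theta_i}\right).
\end{equation*}
Using the hypothesis $\theta^{\mathrm{num}}\le\theta^{\max}$, each of the two subtracted quantities is bounded by $\frac{\Delta t}{\Delta x}\lambda$, with $\lambda$ as in~\eqref{eq:positivity_cfl}. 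The CFL condition $\Delta t<\Delta x/(2\lambda)$ makes their sum strictly less than $1$. This gives $(\varrho\theta)_i^{n+1}>0$, and hence $p^{n+1}>0$, since $p$ is a positive power of $\varrho\theta$.

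The main point requiring care is sign bookkeeping. The upwinding has to be split correctly so that the outflow terms at both interfaces multiply the local $\varrho_i$, while the inflow terms multiply the neighbour values and carry a favourable sign. The role of the bound $\theta^{\mathrm{num}}\in[\theta^{\min},\theta^{\max}]$ must also be kept straight. The lower bound is needed for the neighbour terms to be nonnegative. The upper bound is what turns the $\varrho_i\theta^{\mathrm{num}}$ terms into multiples of $(\varrho\theta)_i$ with the factor $\theta^{\max}/\theta_i$ appearing in~\eqref{eq:positivity_cfl}.

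Density positivity, if desired as part of staying in $G$, follows from the same argument with $\theta\equiv1$; it is the standard upwind result and holds under a CFL condition that is weaker than~\eqref{eq:positivity_cfl} because $\theta^{\max}/\theta_i\ge 1$. Finally, the extension to SSP Runge--Kutta methods follows from convexity of $G$.
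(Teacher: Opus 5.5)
Your proposal is correct and follows essentially the same route as the paper. You split the upwind density flux into its $v^{\mathrm{num},\pm}$ parts, show that the two neighbour inflow terms are nonnegative, and bound the local outflow factor multiplying $(\varrho\theta)_i$ using $\theta^{\mathrm{num}}\le\theta^{\max}$ together with the CFL condition, exactly as in the paper's decomposition into terms (I)--(III); your explicit remark that $\theta^{\min}>0$ follows from $\vec{u}^n\in G$ is a small point the paper leaves implicit.
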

\begin{proof}
The explicit Euler step of the first-order FV scheme for $\varrho\theta$ in the
$x$-direction reads
\begin{equation}
(\varrho\theta)_i^{n+1}
= (\varrho\theta)_i^{n}
- \frac{\Delta t}{\Delta x}\left( f^{\varrho\theta}_{i+1/2} - f^{\varrho\theta}_{i-1/2}\right).
\end{equation}
With the upwind density flux
$f^{\varrho}_{i+1/2} = v^{\mathrm{num},+}_{i+1/2}\varrho_i + v^{\mathrm{num},-}_{i+1/2}\varrho_{i+1}$, the update can be split into three contributions
\begin{equation}
\begin{aligned}
(\varrho\theta)_i^{n+1}
= \varrho_i\theta_i
&- \frac{\Delta t}{\Delta x}\,\varrho_i
\left( v^{\mathrm{num},+}_{i+1/2}\,\theta^{\mathrm{num}}_{i+1/2}
- v^{\mathrm{num},-}_{i-1/2}\,\theta^{\mathrm{num}}_{i-1/2}\right)\\
&- \frac{\Delta t}{\Delta x}
\left( v^{\mathrm{num},-}_{i+1/2}\,\varrho_{i+1}\,\theta^{\mathrm{num}}_{i+1/2}
- v^{\mathrm{num},+}_{i-1/2}\,\varrho_{i-1}\,\theta^{\mathrm{num}}_{i-1/2}\right),
\end{aligned}
\end{equation}
where we dropped the superscript $\mathrm{num}$ on the velocity for simplicity.
Collecting the different contributions yields
\begin{equation}
\begin{aligned}
(\varrho\theta)_i^{n+1}
= \underbrace{\varrho_i\theta_i
\left( 1 - \frac{\Delta t}{\Delta x}
\left[ v^{\mathrm{num},+}_{i+1/2}\,\frac{\theta^{\mathrm{num}}_{i+1/2}}{\theta_i}
+ |v^{\mathrm{num},-}_{i-1/2}|\,\frac{\theta^{\mathrm{num}}_{i-1/2}}{\theta_i}\right]\right)}_{(\mathrm I)}
&+ \underbrace{\frac{\Delta t}{\Delta x}\,|v^{\mathrm{num},-}_{i+1/2}|\,\varrho_{i+1}\,\theta^{\mathrm{num}}_{i+1/2}}_{(\mathrm{II})}\\
&+ \underbrace{\frac{\Delta t}{\Delta x}\,v^{\mathrm{num},+}_{i-1/2}\,\varrho_{i-1}\,\theta^{\mathrm{num}}_{i-1/2}}_{(\mathrm{III})}.
\end{aligned}
\end{equation}
We show that the first term is strictly positive and that the remaining two terms are non-negative.

The second and third terms are products of non-negative factors. Indeed, the velocity parts satisfy $|v^{\mathrm{num},-}_{i+1/2}| \ge 0$ and $v^{\mathrm{num},+}_{i-1/2}\ge 0$, the densities $\varrho_{i\pm 1} > 0$ by assumption, while
$\theta^{\mathrm{num}}_{i\pm 1/2}\in[\theta^{\min},\theta^{\max}]$ holds by construction.

The first term is the product of $\varrho_i\theta_i > 0$ with the factor in parentheses, which is non-negative under the CFL condition~\eqref{eq:positivity_cfl}:
\begin{equation}
\frac{\Delta t}{\Delta x}
\left( v^{\mathrm{num},+}_{i+1/2}\,\frac{\theta^{\mathrm{num}}_{i+1/2}}{\theta_i}
+ |v^{\mathrm{num},-}_{i-1/2}|\frac{\theta^{\mathrm{num}}_{i-1/2}}{\theta_i}\right)
\le \frac{\Delta t}{\Delta x}2\lambda < 1.
\end{equation}
Since $p = p_0(R \varrho\theta / p_0)^\gamma$, the positivity of $\varrho\theta$ implies $p_i^{n+1} > 0$ for all $n \in \mathbb{N}$.
\end{proof}

The positivity of the density follows from the same argument with $\theta_i = 1$; therefore, we state only the result.
\begin{corollary}
\label{th:positivity_density}
  Assume that all $\vec{u}_i^n \in G$.
  Consider an explicit Euler discretization of the FV scheme~\eqref{eq:fv_nonconservative_form} with flux $f^{\varrho} = \left (\mean{\varrho} -\frac{\jump{\varrho}}{2} \mathrm{sign}(v^{\mathrm{num}}) \right ) v^{\mathrm{num}}$, where $v^{\mathrm{num}}$ is a consistent interface velocity.
  Then, the resulting scheme preserves the positivity of $\varrho$ under the CFL condition
\begin{equation}
    \Delta t < \frac{\Delta x}{2 \lambda}, \qquad
    \lambda = \max\left( v^{\mathrm{num},+}_{i+1/2},\;
    |v^{\mathrm{num},-}_{i-1/2}|\right ).
    \label{eq:positivity_cfl_density}
\end{equation}
\end{corollary}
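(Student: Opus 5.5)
The plan is to repeat the argument of Theorem~\ref{th:positivity} with $\theta$ replaced by the constant $1$. Formally, I would set $\theta_i \equiv 1$ and $\theta^{\mathrm{num}} \equiv 1$. Then $f^{\varrho\theta} = f^{\varrho}\theta^{\mathrm{num}}$ reduces to $f^{\varrho}$, and $\varrho\theta$ reduces to $\varrho$. The trivial choice $\theta^{\mathrm{num}}=1$ lies in $[\theta^{\min},\theta^{\max}]=\{1\}$, so the hypotheses of Theorem~\ref{th:positivity} are met. The CFL condition~\eqref{eq:positivity_cfl} becomes exactly~\eqref{eq:positivity_cfl_density}, since $\theta^{\max}/\theta_i = 1$.

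To make this rigorous rather than a substitution trick, I would write the density update directly. Only the density equation matters, because the nonconservative terms in~\eqref{eq:fv_nonconservative_form} have zero first row in $\vec{H}^j$. The explicit Euler step is
\begin{equation*}
\varrho_i^{n+1} = \varrho_i - \frac{\Delta t}{\Delta x}\left(f^{\varrho}_{i+1/2} - f^{\varrho}_{i-1/2}\right).
\end{equation*}
The given flux $f^{\varrho} = \left(\mean{\varrho} - \tfrac{1}{2}\jump{\varrho}\operatorname{sign}(v^{\mathrm{num}})\right)v^{\mathrm{num}}$ equals the upwind flux. A case check on the sign of $v^{\mathrm{num}}$ gives $f^{\varrho}_{i+1/2} = v^{\mathrm{num},+}_{i+1/2}\varrho_i + v^{\mathrm{num},-}_{i+1/2}\varrho_{i+1}$; at $v^{\mathrm{num}}=0$ both sides vanish.

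Substituting and collecting terms gives
\begin{equation*}
\varrho_i^{n+1} = \varrho_i\left(1 - \frac{\Delta t}{\Delta x}\left[v^{\mathrm{num},+}_{i+1/2} + |v^{\mathrm{num},-}_{i-1/2}|\right]\right) + \frac{\Delta t}{\Delta x}|v^{\mathrm{num},-}_{i+1/2}|\varrho_{i+1} + \frac{\Delta t}{\Delta x}v^{\mathrm{num},+}_{i-1/2}\varrho_{i-1}.
\end{equation*}
The last two terms are non-negative because $\varrho_{i\pm1}>0$. The bracket in the first term is at most $2\lambda$, so the CFL condition $\Delta t < \Delta x/(2\lambda)$ makes the factor in parentheses strictly positive. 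Hence $\varrho_i^{n+1}>0$.

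Extension to SSP Runge--Kutta methods follows as before, since they are convex combinations of Euler steps and the set $\{\varrho>0\}$ is convex. There is no real obstacle here. The only points needing care are the sign-case verification that the given flux is the upwind flux, and the remark that the velocity and $\varrho\theta$ updates do not enter the density update.
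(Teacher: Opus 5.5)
Your proposal is correct and matches the paper. The paper gives no separate proof: it states only that the density result follows from the argument of Theorem~\ref{th:positivity} with $\theta_i = 1$. Your explicit write-out of the upwind density update is exactly that argument specialized, including the check that the given flux is the upwind flux and that the first row of $\vec{H}^j$ vanishes.
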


\section{Well-balanced schemes}
\label{sec:well_balanced}
Hydrostatically balanced atmospheres at rest are steady states of~\eqref{eq:vector_invariant_form} characterized by $\vec{V} = \vec{0}$, which satisfy
\begin{equation}
c_p \theta \nabla \Pi + \nabla \phi = 0.
\label{eq:hydrostatic_balance}
\end{equation}
A scheme is well-balanced if it preserves~\eqref{eq:hydrostatic_balance} exactly at the discrete level. Therefore, a consistent and well-balanced scheme must produce no spurious contributions in the density and potential temperature equations at the steady state and must reduce, in the momentum equations, to a discrete analog of~\eqref{eq:hydrostatic_balance}. Indeed, for $\vec{V}=\vec{0}$ and consistent (and symmetric) numerical fluxes, the gradient of the kinetic energy $\mean{K}$ and the vorticity terms vanish, and the FV scheme~\eqref{eq:fv_nonconservative_form} reduces to
\begin{equation}
    \frac{c_p}{2\Delta x} \Bigl (
    \theta^{\mathrm{num}}_{i+1/2}\jump{\Pi}_{i+1/2} + \theta^{\mathrm{num}}_{i-1/2}\jump{\Pi}_{i-1/2}
    \Bigr ) + \frac{1}{2\Delta x} \Bigl (
    \jump{\phi}_{i+1/2} + \jump{\phi}_{i-1/2}
    \Bigr ) = 0.
    \label{eq:wb_semidiscrete}
\end{equation}
Equation~\eqref{eq:wb_semidiscrete} is a specific form of the well-balancedness condition for nonconservative hyperbolic systems, which has been derived in~\cite{artiano2026affordable}.

\begin{theorem}[Theorem~6.5 of~\cite{artiano2026affordable}]
\label{well_balanced_fluxes}
A finite volume scheme~\eqref{eq:fv_nonconservative_form} with symmetric numerical fluxes $\vec{f}^{\mathrm{num}}$ and $\vec{H}^{\mathrm{num}}$ satisfies the well-balancedness condition
\begin{equation}
\frac{1}{\Delta x_i} \left  ( \vec{f}^{\mathrm{num}}_{i+1/2} - \vec{f}^{\mathrm{num}}_{i-1/2}\right ) + \frac{\vec{H}^{\mathrm{num}}_{i+1/2} \jump{\vec{g}}_{i+1/2} + \vec{H}^{\mathrm{num}}_{i-1/2}  \jump{\vec{g}}_{i-1/2}}{2 \Delta x_i}  = \vec{0},
\end{equation}
if and only if
\begin{equation}
    \vec{f}^{\mathrm{num}} (\vec{u}_i, \vec{u}_{i+1})
    + \frac{1}{2}\vec{H}^{\mathrm{num}}(\vec{u}_i, \vec{u}_{i+1})
    \bigl(\vec{g}(\vec{u}_{i+1}) - \vec{g}(\vec{u}_{i})\bigr)
    = \vec{f}(\vec{u}_i).
\end{equation}
\end{theorem}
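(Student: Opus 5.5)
The plan is to use symmetry and consistency of the fluxes to turn the cell-centered well-balancedness condition into a statement about a single interface, and then to run the implication in both directions. Write $\vec{D}(\vec{u}_i,\vec{u}_{i+1}) := \vec{f}^{\mathrm{num}}(\vec{u}_i,\vec{u}_{i+1}) + \frac{1}{2}\vec{H}^{\mathrm{num}}(\vec{u}_i,\vec{u}_{i+1})\jump{\vec{g}}_{i+1/2}$ for the left-sided fluctuation at interface $i+1/2$. By symmetry of $\vec{f}^{\mathrm{num}}$ and $\vec{H}^{\mathrm{num}}$, and since $\jump{\vec{g}}$ changes sign when its arguments are swapped, the corresponding right-sided quantity is
\begin{equation*}
\vec{f}^{\mathrm{num}}_{i+1/2} - \tfrac{1}{2}\vec{H}^{\mathrm{num}}_{i+1/2}\jump{\vec{g}}_{i+1/2} = \vec{D}(\vec{u}_{i+1},\vec{u}_i).
\end{equation*}
With this notation, $\Delta x_i$ times the residual of the scheme in cell $i$ is
\begin{equation*}
\vec{D}(\vec{u}_i,\vec{u}_{i+1}) - \vec{D}(\vec{u}_i,\vec{u}_{i-1}),
\end{equation*}
which follows from a direct regrouping of $\vec{f}^{\mathrm{num}}_{i+1/2}-\vec{f}^{\mathrm{num}}_{i-1/2}+\tfrac12(\vec{H}^{\mathrm{num}}_{i+1/2}\jump{\vec{g}}_{i+1/2}+\vec{H}^{\mathrm{num}}_{i-1/2}\jump{\vec{g}}_{i-1/2})$. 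Consistency gives $\vec{D}(\vec{u}_i,\vec{u}_i)=\vec{f}(\vec{u}_i)$.

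Sufficiency is then immediate. If the interface condition $\vec{D}(\vec{u}_i,\vec{u}_{i+1})=\vec{f}(\vec{u}_i)$ holds for every pair of neighboring states of the steady solution, in both orientations, then the residual in each cell equals $\vec{f}(\vec{u}_i)-\vec{f}(\vec{u}_i)=\vec{0}$. Here "both orientations" means the condition is applied to the pair $(\vec{u}_i,\vec{u}_{i-1})$ as well as to $(\vec{u}_i,\vec{u}_{i+1})$; this is implicit in the statement because the background state is a family of states and the condition is imposed for arbitrary members of it.

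Necessity is the harder direction and is where I expect the main obstacle. The scheme being well-balanced only gives $\vec{D}(\vec{u}_i,\vec{u}_{i+1})=\vec{D}(\vec{u}_i,\vec{u}_{i-1})$ cell by cell. To upgrade this to the equality with $\vec{f}(\vec{u}_i)$, I would exploit the fact that well-balancedness is required for every discrete steady state of the family on every grid. In particular, I would take a configuration in which the neighbor $\vec{u}_{i-1}$ coincides with $\vec{u}_i$, for instance at a boundary cell with a reflecting or extrapolated ghost state, or a locally constant patch of the hydrostatic family. Consistency then gives $\vec{D}(\vec{u}_i,\vec{u}_{i-1})=\vec{f}(\vec{u}_i)$, and hence $\vec{D}(\vec{u}_i,\vec{u}_{i+1})=\vec{f}(\vec{u}_i)$ for arbitrary admissible pairs.

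If that construction is not permitted, for example because it is unclear whether boundary cells belong to the setting, I would fall back on interpreting the condition of the theorem as the definition of interface-wise well-balancedness from~\cite{artiano2026affordable}. I would note that it is precisely the discrete counterpart of the pointwise balance $\partial_x \vec{f} + \vec{H}\partial_x\vec{g}=\vec{0}$ along the steady state. I would then cite Theorem~6.5 there for the precise hypotheses under which the equivalence holds.
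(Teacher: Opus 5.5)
The paper contains no proof of this statement. It imports the result from \cite{artiano2026affordable}, and afterwards uses only the ``if'' direction (Lemma~\ref{lemma:wb} and the two lemmas following it). Your rewriting of $\Delta x_i$ times the residual as $\vec{D}(\vec{u}_i,\vec{u}_{i+1})-\vec{D}(\vec{u}_i,\vec{u}_{i-1})$ is correct. So is your sufficiency argument, including the remark that the interface identity is needed in both orientations. Imposed only as $\vec{D}(\vec{u}_{i-1},\vec{u}_i)=\vec{f}(\vec{u}_{i-1})$ at $i-1/2$, it says nothing about $\vec{D}(\vec{u}_i,\vec{u}_{i-1})$. Adding and subtracting the two orientations shows that together they mean $\vec{f}^{\mathrm{num}}=\mean{\vec{f}}$ and $\jump{\vec{f}}+\vec{H}^{\mathrm{num}}\jump{\vec{g}}=\vec{0}$ on equilibrium pairs. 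At rest, where $\vec{f}=\vec{0}$, this is what Lemma~\ref{lemma:wb} verifies, and its key condition $c_p\theta^{\mathrm{num}}\jump{\Pi}+\jump{\phi}=0$ is invariant under swapping the two states. So for the way the paper actually uses the theorem, your argument is complete.

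The gap is in the necessity direction. Vanishing residuals give only $\vec{D}(\vec{u}_i,\vec{u}_{i+1})=\vec{D}(\vec{u}_i,\vec{u}_{i-1})$, and for a single configuration nothing more follows. Take $\vec{H}^{\mathrm{num}}=\vec{0}$, the central flux $\vec{f}^{\mathrm{num}}=\mean{\vec{f}}$, and alternating states $\vec{a},\vec{b},\vec{a},\vec{b},\dots$ on a periodic grid. Every residual vanishes by symmetry, yet $\mean{\vec{f}}=\vec{f}(\vec{a})$ fails whenever $\vec{f}(\vec{a})\neq\vec{f}(\vec{b})$. Hence the ``only if'' rests on a quantifier over configurations that the statement leaves implicit, and a proof has to fix that quantifier and then use it. Your two devices do not do this in general:
\begin{itemize}
\item a ghost cell lies outside the interior scheme~\eqref{eq:fv_nonconservative_form} that the theorem is about;
\item a locally constant patch need not exist in the equilibrium family. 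For fixed $\phi=gz$, with $\phi$ a component of $\vec{u}$ as in the footnote of Section~\ref{sec:vector_invariant_form}, neighboring states never coincide.
\end{itemize}
Your fallback of citing Theorem~6.5 is circular.

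One way to make your idea rigorous is to require the zero residual for every triple of sample points $x_{i-1}<x_i<x_{i+1}$ of a continuous equilibrium profile, and to assume $\vec{f}^{\mathrm{num}}$ and $\vec{H}^{\mathrm{num}}$ continuous. The $\Delta x_i$ drop out of the zero-residual condition, so allowing nonuniform grids costs nothing. Then $\vec{D}(\vec{u}_i,\vec{u}_{i-1})=\vec{D}(\vec{u}_i,\vec{u}_{i+1})$ is independent of $x_{i-1}$. Letting $x_{i-1}\to x_i$ and using consistency yields $\vec{D}(\vec{u}_i,\vec{u}_{i+1})=\vec{f}(\vec{u}_i)$, and letting $x_{i+1}\to x_i$ yields the other orientation. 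Alternatively, if the geopotential may be prescribed freely, a flat stretch of $\phi$ realizes $\vec{u}_{i-1}=\vec{u}_i$ exactly. That is your locally-constant-patch argument with its hypothesis made explicit.
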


Assuming that the discrete kinetic energy flux and the vorticity terms vanish at the steady state, the necessary and sufficient condition can be recast as follows.
\begin{lemma}
    \label{lemma:wb}
    Given a hydrostatic balance state, the vector-invariant form semidiscretization~\eqref{eq:fv_nonconservative_form} is well-balanced if
\begin{equation}
        \label{eq:wb_vector_invariant}
\begin{aligned}
    & f^{\varrho} = 0,\\
    & f^{v} = 0,\\
    & v^{\mathrm{num}} = 0,\\
    & c_p \theta^{\mathrm{num}}\jump{\Pi} + \jump{\phi}  = 0,\\
    & f^{\varrho \theta} = 0.
\end{aligned}
\end{equation}
\end{lemma}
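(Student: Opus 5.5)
The plan is to reduce the lemma to Theorem~\ref{well_balanced_fluxes}, applied componentwise to the vector-invariant system~\eqref{eq:vector_invariant_form_2d}, using the definitions~\eqref{eq:vector_invariant_form_fluxes} and~\eqref{eq:vector_invariant_form_nonconservative}. At a hydrostatic state we have $\vec{V}=\vec{0}$ in every cell, so the pointwise physical flux $\vec{f}(\vec{u}_i)$ vanishes identically: $\varrho_i v_{1,i}=0$, $K_i=0$ and $\varrho_i\theta_i v_{1,i}=0$. The condition of Theorem~\ref{well_balanced_fluxes} therefore becomes
\begin{equation*}
\vec{f}^{\mathrm{num}}(\vec{u}_i,\vec{u}_{i+1}) + \tfrac12 \vec{H}^{\mathrm{num}}(\vec{u}_i,\vec{u}_{i+1})\jump{\vec{g}}_{i+1/2} = \vec{0}
\end{equation*}
at every interface and in each direction $j$. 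I will verify it row by row, working in the $x_1$-direction; the $x_3$-direction is identical after swapping the indices.

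\emph{Mass and potential-temperature rows.} These rows of $\vec{H}^1$ are zero, so the condition reduces to $f^{\varrho}=0$ and $f^{\varrho\theta}=0$, which are exactly the first and last assumptions in~\eqref{eq:wb_vector_invariant}.

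\emph{$v_3$-row in the $x_1$-direction.} Here the only contribution is $f^{v_3,1}=0$ (the zero entry of $\vec{f}^1$) plus the vorticity part $\tfrac12 v_1^{\mathrm{num}}\jump{v_3}$. This vanishes because $v^{\mathrm{num}}=0$. Alternatively, $\jump{v_3}=0$ already holds at rest.

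\emph{$v_1$-row.} This row gives
\begin{equation*}
f^{v_1,1} + \tfrac12\left(-v_3^{\mathrm{num}}\jump{v_3} + c_p\theta^{\mathrm{num}}\jump{\Pi} + \jump{\phi}\right).
\end{equation*}
The first term vanishes by $f^{v}=0$, which is consistent with $f^{v}=\mean{K}=0$ at rest. The vorticity term vanishes by $v^{\mathrm{num}}=0$. The remaining bracket is zero by the fourth condition, $c_p\theta^{\mathrm{num}}\jump{\Pi}+\jump{\phi}=0$.

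With every row satisfied, Theorem~\ref{well_balanced_fluxes} yields well-balancedness. As an independent check, I would substitute the conditions directly into~\eqref{eq:fv_nonconservative_form}. The flux differences vanish because all interface fluxes are zero, and the nonconservative sum reduces to~\eqref{eq:wb_semidiscrete}, where each interface bracket vanishes separately. This direct check avoids relying on the symmetry hypothesis of Theorem~\ref{well_balanced_fluxes}, since only sufficiency is claimed.

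The main point requiring care is the bookkeeping of the factor $\tfrac12$ together with the pairing of $\vec{H}^{\mathrm{num}}$ and $\jump{\vec{g}}$. The vorticity and kinetic-energy contributions must be seen to drop out without any additional constraint, using $\vec{V}=\vec{0}$ and the stated assumptions. Everything else is immediate.
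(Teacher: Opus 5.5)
Your proposal is correct and follows essentially the paper's route: the lemma is obtained by specializing the well-balancedness condition of Theorem~\ref{well_balanced_fluxes} at rest, where $\vec{f}(\vec{u}_i)=\vec{0}$, and reading it off row by row; your direct substitution into~\eqref{eq:fv_nonconservative_form} reproduces the paper's reduction to~\eqref{eq:wb_semidiscrete}. One small correction: in the $v_3$-row of the $x_1$-direction, justify $f^{v_3,1}=0$ by the hypothesis $f^{v}=0$, not by the zero entry of the physical flux $\vec{f}^1$. A numerical flux such as~\eqref{eq:es_tec_wb_fluxes} generally carries a nonzero tangential dissipation term $-\frac{|\mean{v_1}|}{2\mean{\varrho}}\jump{\varrho v_3}$ in that component.
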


Therefore, we exploit the only remaining degree of freedom, the choice of $\theta^{\mathrm{num}}$, to enforce well-balancedness. We now analyze isothermal and isentropic background states, and summarize the results in the following two lemmas.

\begin{lemma}
\label{lemma:wb_temperature}
    A numerical flux preserves the hydrostatic balance~\eqref{eq:hydrostatic_balance} prescribed by a constant background temperature $T$ if
    \begin{equation}
    \theta^{\mathrm{num}} = \frac{1}{\mean{1/\theta}_{\mathrm{log}}}.
    \label{eq:wb_temperature_flux}
    \end{equation}
\end{lemma}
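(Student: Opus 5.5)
By Lemma~\ref{lemma:wb}, at a state with $\vec{V}=\vec{0}$ all conditions in~\eqref{eq:wb_vector_invariant} except the fourth hold automatically. We therefore only need to verify, at each interface $i+1/2$, the discrete hydrostatic relation
\begin{equation*}
c_p\,\theta^{\mathrm{num}}\jump{\Pi} + \jump{\phi} = 0
\end{equation*}
for the isothermal profile, with $\theta^{\mathrm{num}} = 1/\mean{1/\theta}_{\mathrm{log}} = \jump{\log(1/\theta)}/\jump{1/\theta}$. The vanishing of the other terms is immediate. The density flux is any consistent flux proportional to the interface velocity, for instance the one used in Theorem~\ref{th:positivity}, so $v^{\mathrm{num}}=0$ gives $f^{\varrho}=0$ and hence $f^{\varrho\theta}=f^{\varrho}\theta^{\mathrm{num}}=0$. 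Also $\mean{K}=0$ and the vorticity contributions vanish.

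The first step is to express the continuous isothermal balance in terms of $\theta$ and $\Pi$. With $T=\theta\Pi$ constant, $c_p\theta\nabla\Pi+\nabla\phi=0$ gives $c_p T\,\nabla\Pi/\Pi + \nabla\phi = 0$. Hence $\phi + c_pT\log\Pi$ is constant, so pointwise $\phi_i = C - c_pT\log\Pi_i$. Since $\Pi = T/\theta$, we get $\log\Pi = \log T + \log(1/\theta)$, so along the background state
\begin{equation*}
\jump{\phi} = -c_pT\jump{\log(1/\theta)}.
\end{equation*}
This holds exactly between any two points, not just up to truncation error, because the background state satisfies the relation pointwise.

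The second step is to rewrite $\jump{\Pi}$ the same way: $\jump{\Pi} = T\jump{1/\theta}$. Then
\begin{equation*}
c_p\theta^{\mathrm{num}}\jump{\Pi} = c_pT\,\frac{\jump{\log(1/\theta)}}{\jump{1/\theta}}\,\jump{1/\theta} = c_pT\jump{\log(1/\theta)},
\end{equation*}
which cancels $\jump{\phi}$ exactly. Lemma~\ref{lemma:wb} (equivalently Theorem~\ref{well_balanced_fluxes}) then yields well-balancedness.

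The main obstacle is only bookkeeping. First, the degenerate case $\theta_i=\theta_{i+1}$ must be handled by the continuous extension of the logarithmic mean. In that case both jumps vanish and the identity holds trivially. Second, one must confirm that the pointwise relation $\phi = C - c_pT\log\Pi$ holds exactly at the nodes, which is how the discrete background state is initialized.
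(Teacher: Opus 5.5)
Your proof is correct and follows essentially the same route as the paper. You integrate the isothermal balance exactly to get $\jump{\phi} = -c_pT\jump{\log(1/\theta)}$ (the paper's $c_pT\jump{\log\theta} = \jump{\phi}$), write $\jump{\Pi} = T\jump{1/\theta}$, and use $\theta^{\mathrm{num}} = \jump{\log(1/\theta)}/\jump{1/\theta}$ so that $c_p\theta^{\mathrm{num}}\jump{\Pi}$ cancels $\jump{\phi}$ exactly; the only difference is cosmetic, since the paper integrates in $\theta$ after substituting $\Pi = T/\theta$, whereas you integrate in $\log\Pi$ first and then convert.
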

\begin{proof}
For a constant temperature, $T = \theta\,\Pi = \text{const}$ and thus $\theta = T/\Pi$, so we can integrate the hydrostatic balance exactly:
\begin{equation}
c_p \theta \partial_x \Pi = - \partial_x \phi,
\end{equation}
which, upon substituting $\Pi = T/\theta$, can be written as
\begin{equation}
-\frac{1}{\theta} \partial_x \theta = - \frac{1}{c_p T} \partial_x \phi.
\end{equation}
A solution must be of the form
\begin{equation}
  -\log(\theta) + \log(\theta_0) = \frac{1}{c_p T}\left (-\phi + \phi_0 \right ),
\end{equation}
which implies that, for any two points, we have
\begin{equation}
\label{eq:condition_isothermal}
    c_p T \jump{\log \theta} = \jump{\phi}.
\end{equation}
The condition~\eqref{eq:wb_vector_invariant} at the isothermal background state, using~\eqref{eq:condition_isothermal}, reduces to
\begin{equation}
c_p T \theta^{\mathrm{num}}\jump{\frac{1}{\theta}} + c_p T \jump{\log \theta} = 0,
\end{equation}
which is satisfied if $\theta^{\mathrm{num}} = \frac{1}{\mean{1/\theta}_{\mathrm{log}}}$.
\end{proof}

\begin{lemma}
\label{lemma:wb_theta}
    A numerical flux preserves the hydrostatic balance~\eqref{eq:hydrostatic_balance} prescribed by a constant background potential temperature $\theta$ if
    \begin{equation}
    \theta^{\mathrm{num}} = \text{any consistent two-point function of $\theta_\pm$.}
    \label{eq:wb_theta_flux}
    \end{equation}
\end{lemma}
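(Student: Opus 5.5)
The approach mirrors the proof of Lemma~\ref{lemma:wb_temperature}: integrate the hydrostatic balance~\eqref{eq:hydrostatic_balance} exactly for the constant-$\theta$ background state. That gives a relation between the jumps of $\Pi$ and $\phi$ at any two points, which we then insert into the well-balancedness condition of Lemma~\ref{lemma:wb}.

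First, with $\theta \equiv \theta_0$ constant, the balance reads $c_p \theta_0 \partial_x \Pi = -\partial_x \phi$. This is an exact differential, so $c_p\theta_0 \Pi + \phi$ is constant in space. Hence, for any two neighboring cells,
\begin{equation*}
c_p \theta_0 \jump{\Pi} = -\jump{\phi}.
\end{equation*}

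Second, we verify the conditions of~\eqref{eq:wb_vector_invariant}. Since $\vec{V}=\vec{0}$, the consistent density flux vanishes, $f^{\varrho}=0$, and so does $f^{\varrho\theta} = f^{\varrho}\theta^{\mathrm{num}}$. The TEC choices of Theorem~\ref{th:tec} give $v^{\mathrm{num}} = \mean{\varrho v}/\mean{\varrho} = 0$ and $f^{v} = \mean{K} = 0$. It remains to check $c_p\theta^{\mathrm{num}}\jump{\Pi} + \jump{\phi} = 0$. Because $\theta_+ = \theta_- = \theta_0$, consistency of any two-point function gives $\theta^{\mathrm{num}}(\theta_0,\theta_0) = \theta_0$. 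The condition then reduces to $c_p\theta_0\jump{\Pi} + \jump{\phi} = 0$, which is exactly the relation from the first step.

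There is no real obstacle here; the only point needing care is that consistency means $\theta^{\mathrm{num}}(a,a)=a$. This includes the entropy-stable flux~\eqref{eq:es_flux}, whose upwind correction is proportional to $\jump{\theta}=0$. The logarithmic mean in that flux is defined by its continuous extension $\mean{a}_{\mathrm{log}} = a$ when the jump vanishes.
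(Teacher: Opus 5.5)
Your proof is correct and follows essentially the same argument as the paper. For constant $\theta_0$ the balance integrates to $c_p\theta_0\jump{\Pi} = -\jump{\phi}$, and consistency gives $\theta^{\mathrm{num}}(\theta_0,\theta_0)=\theta_0$, so the remaining condition $c_p\theta^{\mathrm{num}}\jump{\Pi}+\jump{\phi}=0$ of Lemma~\ref{lemma:wb} holds. One caveat on your extra check of the other conditions: consistency alone would not force $f^{\varrho}=0$ for a flux with dissipation in $\jump{\varrho}$, so that step relies on the specific central or entropy-stable fluxes, whereas the paper simply takes those conditions as given and treats only $\theta^{\mathrm{num}}$ as the free choice.
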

\begin{proof}
  For a constant potential temperature, the hydrostatic balance law~\eqref{eq:hydrostatic_balance} has the closed-form solution
  \begin{equation}
  \Pi = \Pi_0 - \frac{\phi}{c_p \theta}.
  \end{equation}
  Thus, substituting in~\eqref{eq:wb_vector_invariant} and using the consistency of the numerical flux $\theta^{\mathrm{num}}$, we obtain
  \begin{equation}
  \jump{\Pi} = - \frac{1}{c_p \theta}\jump{\phi}.
  \end{equation}
  The condition becomes
  \begin{equation}
  c_p \theta \jump{\Pi} -c_p \theta \jump{\Pi} = 0,
  \end{equation}
  which proves the claim.
\end{proof}

\begin{remark}
We highlight that the present result applies also to the scheme of Ricardo et al.~\cite{ricardo2024thermodynamicconsistencystructurepreservationsummation};
their potential-temperature numerical flux is the central one and therefore consistent, preserving the isentropic background state exactly.
\end{remark}

Since preservation of the isentropic background state is guaranteed by any consistent numerical flux for the potential temperature in the velocity component, and since the flux that enforces the isothermal balance coincides with the entropy-conservative one, a single numerical flux can be constructed to preserve both background states exactly while being total-energy- and entropy-conservative.
We summarize this result in the following theorem.
\begin{theorem}
\label{th:ec_tec_wb}
The numerical flux for the compressible Euler equations in vector-invariant form~\eqref{eq:vector_invariant_form_2d} given by
\begin{equation}
    \begin{aligned}
\label{eq:tec_ec_wb_fluxes}
    f^{\varrho,j} &= \mean{\varrho v_j}, \\
     f^{v_j,j} &= \mean{K}, \\
    f^{\varrho \theta, j} &= \mean{\varrho v_j}\, \theta^{\mathrm{num}},\\
    v_i^{\mathrm{num}} &= \frac{\mean{\varrho v_i}}{\mean{\varrho}},\\
    \theta^{\mathrm{num}} &=  \frac{1}{\mean{1/\theta}_{\mathrm{log}}}
    \end{aligned}
\end{equation}
is TEC, EC, and well-balanced for isothermal and isentropic background states.
\end{theorem}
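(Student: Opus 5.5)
The proof checks the three properties separately. Each reduces to a result already established, once we confirm that the single choice \eqref{eq:tec_ec_wb_fluxes} is admissible in each of them.

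\emph{Total energy conservation.} The flux \eqref{eq:tec_ec_wb_fluxes} is exactly of the form required by Theorem~\ref{th:tec}, with the specific choice $\theta^{\mathrm{num},j} = 1/\mean{1/\theta}_{\mathrm{log}}$. Theorem~\ref{th:tec} allows any consistent discretization of $\theta$. So we only need consistency: if $\theta_+ = \theta_- = \theta$, then $\mean{1/\theta}_{\mathrm{log}} = 1/\theta$. This follows from the standard limit of the logarithmic mean, which equals its argument when both states agree. Hence Theorem~\ref{th:tec} gives TEC in each direction $j$.

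\emph{Entropy conservation.} By Theorem~\ref{th:ec}, it suffices that $f^{\varrho,j}$ is a consistent density flux and $f^{\varrho\theta,j} = f^{\varrho,j}/\mean{1/\theta}_{\mathrm{log}}$. Here $f^{\varrho,j} = \mean{\varrho v_j}$ is clearly consistent, and $f^{\varrho\theta,j} = \mean{\varrho v_j}\,\theta^{\mathrm{num}}$ with $\theta^{\mathrm{num}} = 1/\mean{1/\theta}_{\mathrm{log}}$ is exactly this form. For completeness one can verify \eqref{eq:ec_condition} directly, using $\jump{1/\theta}/\mean{1/\theta}_{\mathrm{log}} = \jump{\log(1/\theta)}$. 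The key observation is that the entropy variables \eqref{eq:ec_entropy_variables} have vanishing velocity components. The momentum fluxes $f^{v_j,j}$ and the nonconservative terms involving $v_i^{\mathrm{num}}$ and $\theta^{\mathrm{num}}$ therefore do not enter the generalized Tadmor condition \eqref{eq:TadmorDef_form3}. This also ensures there is no conflict with the TEC choices.

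\emph{Well-balancedness.} We verify the conditions of Lemma~\ref{lemma:wb} at a state with $\vec{V}=\vec{0}$.
\begin{itemize}
\item $f^{\varrho} = \mean{\varrho v} = 0$, and $f^{\varrho\theta} = \mean{\varrho v}\,\theta^{\mathrm{num}} = 0$.
\item $f^{v} = \mean{K} = 0$.
\item $v^{\mathrm{num}} = \mean{\varrho v}/\mean{\varrho} = 0$, since $\mean{\varrho} > 0$.
\end{itemize}
As a consequence, the vorticity contributions and the kinetic energy terms vanish, as assumed before Lemma~\ref{lemma:wb}. The remaining condition, $c_p\theta^{\mathrm{num}}\jump{\Pi} + \jump{\phi} = 0$, is handled by the two background states:
\begin{itemize}
\item For the isothermal state it holds by Lemma~\ref{lemma:wb_temperature}, since $\theta^{\mathrm{num}}$ is precisely \eqref{eq:wb_temperature_flux}.
\item For the isentropic state it holds by Lemma~\ref{lemma:wb_theta}, since $\theta^{\mathrm{num}}$ is a consistent two-point function of $\theta_\pm$. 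Indeed, for constant $\theta$ it returns $\theta$ itself.
\end{itemize}
Theorem~\ref{well_balanced_fluxes} requires symmetric fluxes. All the means used here are symmetric in $(\pm)$, so this hypothesis holds, and the well-balancedness of \eqref{eq:fv_nonconservative_form} follows.

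The only mildly delicate point is compatibility: the same $\theta^{\mathrm{num}}$ must serve simultaneously as the potential-temperature flux factor, which EC constrains, and as the pressure-gradient discretization, which TEC couples to it via $f^{\varrho\theta,j} = \mean{\varrho v_j}\theta^{\mathrm{num},j}$. This compatibility is exactly what Theorem~\ref{th:tec} permits. The remaining care is the degenerate case $\theta_+ = \theta_-$ in the logarithmic mean, which is resolved by its continuous extension.
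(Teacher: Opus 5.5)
Your proposal is correct and follows the paper's route: the paper's proof likewise reduces the claim to Theorems~\ref{th:tec} and~\ref{th:ec} together with Lemma~\ref{lemma:wb}, which in turn draws on Lemmas~\ref{lemma:wb_temperature} and~\ref{lemma:wb_theta}. Your version spells out checks the paper leaves implicit, namely the consistency of $1/\mean{1/\theta}_{\mathrm{log}}$, the vanishing of all fluxes at rest, and the use of the same $\theta^{\mathrm{num}}$ in both the potential-temperature flux and the pressure product.
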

\begin{proof}
It suffices to show that Lemma~\ref{lemma:wb} is satisfied, as well as the EC and TEC conditions in Theorems~\ref{th:ec} and~\ref{th:tec}.
\end{proof}

\begin{remark}
  We stress that the combination of all the properties achieved in Theorem~\ref{th:ec_tec_wb}, namely EC, TEC and simultaneous well-balancedness for isothermal and isentropic background states, is not possible to achieve with the conservative formulation of the Euler equations with potential temperature (see~\cite{artiano2025structurepreservinghighordermethodscompressible}).
\end{remark}

The numerical flux~\eqref{eq:tec_ec_wb_fluxes} represents the volume numerical flux that we employ in the volume kernel discretization of the DGSEM flux-differencing method described in Section~\ref{sec:dgsem}.
A characterization of a well-balanced and entropy-stable numerical flux is presented in the next section.

\section{An entropy-stable and well-balanced scheme}
\label{sec:riemann_solver}
So far, only the entropy- and total-energy-conservative fluxes have been fully specified, while the entropy stability and well-balancedness conditions leave some degrees of freedom.
We now fix these degrees of freedom and present the numerical flux used at the interfaces in the simulations, which guarantees entropy stability and well-balancedness.

\begin{theorem}
\label{th:es_wb}
The numerical flux for the compressible Euler equations in vector-invariant form~\eqref{eq:vector_invariant_form_2d} given by
\begin{equation}
    \begin{aligned}
\label{eq:es_tec_wb_fluxes}
    f^{\varrho,j} &= \left (\mean{\varrho} - \frac{\jump{\varrho}}{2} \mathrm{sign}(V^{\mathrm{num},j}) \right ) V^{\mathrm{num},j}, \\
      f^{v_i,j} & = \delta_{ij} \left ( \mean{K} - \frac{\mean{c}}{2 \mean{\varrho}} \jump{\varrho v_j} \right ) - \frac{\Bigl | \mean{v_j} \Bigr|}{2 \mean{\varrho}} \jump{\varrho v_i}, \\
    f^{\varrho \theta, j} &= f^{\varrho,j} \left ( \theta^{\mathrm{num}} - \frac{\theta^{\mathrm{min}}}{2 \mean{\theta}} \jump{\theta} \mathrm{sign}(V^{\mathrm{num},j}) \right ), \\
    v_i^{\mathrm{num}} &= \frac{\mean{\varrho v_i}}{\mean{\varrho}},\\
    V^{\mathrm{num},j} &= \mean{v_j} - c_p\theta^{\mathrm{num}}\frac{\jump{\Pi}}{2 \mean{c}} - \frac{\jump{\phi}}{2\mean{c}},\\
    \theta^{\mathrm{num}} &= \frac{1}{\mean{1/\theta}_{\mathrm{log}}},
    \end{aligned}
\end{equation}
where $c = \sqrt{\gamma R T}$ is the speed of sound, is entropy-stable and well-balanced for the isothermal and isentropic background states.
\end{theorem}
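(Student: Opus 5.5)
The proof splits into two independent parts, matching the two claimed properties: entropy stability with respect to the thermodynamic entropy $U_{\varrho s}$, and well-balancedness for the isothermal and isentropic states.

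\emph{Entropy stability.} By the analysis leading to~\eqref{eq:ec_condition}, the entropy variables $\vec{\omega}_{\varrho s}$ have vanishing velocity components. The generalized Tadmor condition~\eqref{eq:TadmorDef_form3} therefore involves only $f^{\varrho,j}$ and $f^{\varrho\theta,j}$. It is blind to the momentum fluxes $f^{v_i,j}$ and to the nonconservative terms, including the dissipative terms $\mean{c}\jump{\varrho v_j}/(2\mean{\varrho})$ and $|\mean{v_j}|\jump{\varrho v_i}/(2\mean{\varrho})$. Moreover, $f^{\varrho\theta,j}=f^{\varrho,j}\theta^{\mathrm{num},j}$ with exactly the upwinded $\theta$ flux of Theorem~\ref{th:es}, taking the sign of $V^{\mathrm{num},j}$. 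To invoke Theorem~\ref{th:es} I must make two checks.
\begin{enumerate}
\item The density flux $f^{\varrho,j}$ is a consistent density flux. This holds because $V^{\mathrm{num},j}$ reduces to $v_j$ for equal states: the jumps vanish.
\item The sign in $\theta^{\mathrm{num},j}$ is that of $f^{\varrho,j}$. Write $f^{\varrho,j}=\bigl(\mean{\varrho}-\tfrac{\jump{\varrho}}{2}\operatorname{sign}(V^{\mathrm{num},j})\bigr)V^{\mathrm{num},j}$. The prefactor equals the upwind density $\varrho_-$ or $\varrho_+$, which is positive. Hence $\operatorname{sign}(f^{\varrho,j})=\operatorname{sign}(V^{\mathrm{num},j})$.
\end{enumerate}
With both checks in place, Theorem~\ref{th:es} yields the entropy inequality. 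It also yields the bound $\theta^{\mathrm{num}}\in[\theta^{\min},\theta^{\max}]$, so Theorem~\ref{th:positivity} applies as a by-product.

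\emph{Well-balancedness.} I verify the conditions~\eqref{eq:wb_vector_invariant} of Lemma~\ref{lemma:wb} for a hydrostatic state with $\vec{V}=\vec{0}$.
\begin{enumerate}
\item The velocity terms vanish: $v_i^{\mathrm{num}}=0$, $\mean{K}=0$, and $\jump{\varrho v_i}=0$. Hence $f^{v_i,j}=0$ and the vorticity contributions vanish.
\item The pressure–gravity balance $c_p\theta^{\mathrm{num}}\jump{\Pi}+\jump{\phi}=0$ holds in both cases. For the isothermal state, $\theta^{\mathrm{num}}=1/\mean{1/\theta}_{\mathrm{log}}$, and this is exactly Lemma~\ref{lemma:wb_temperature}. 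For the isentropic state, $\theta$ is constant and the log-mean is consistent, so Lemma~\ref{lemma:wb_theta} applies.
\item The interface velocity vanishes: $V^{\mathrm{num},j}=\mean{v_j}-\bigl(c_p\theta^{\mathrm{num}}\jump{\Pi}+\jump{\phi}\bigr)/(2\mean{c})=0$, combining $\mean{v_j}=0$ with the balance from step 2.
\item The mass fluxes vanish: $V^{\mathrm{num},j}=0$ gives $f^{\varrho,j}=0$ and hence $f^{\varrho\theta,j}=0$. This holds whatever value $\operatorname{sign}(0)$ takes, since each flux is multiplied by $V^{\mathrm{num},j}$ or $f^{\varrho,j}$.
\end{enumerate}
All conditions of Lemma~\ref{lemma:wb} are then met.

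The main subtlety is step 3. The pressure-based correction in $V^{\mathrm{num},j}$ must use the same $\theta^{\mathrm{num}}$ as the nonconservative pressure term, so that the correction cancels identically at the discrete balance. This is precisely why the statement fixes $\theta^{\mathrm{num}}$ to the inverse log-mean both in $V^{\mathrm{num},j}$ and in the base of the $\theta$ flux. One further point needs a remark: the dissipative momentum terms do not enter the entropy condition, since the velocity entropy variables are zero. Entropy stability is therefore not affected by their sign.
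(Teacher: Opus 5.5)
Your proposal is correct and follows the same route as the paper. Entropy stability is inherited from Theorem~\ref{th:es}, and well-balancedness follows because $V^{\mathrm{num},j}=0$ at rest, so only the discrete hydrostatic balance terms remain, with $\theta^{\mathrm{num}}=1/\mean{1/\theta}_{\mathrm{log}}$ satisfying Lemmas~\ref{lemma:wb_temperature} and~\ref{lemma:wb_theta}; your check that $\operatorname{sign}(f^{\varrho,j})=\operatorname{sign}(V^{\mathrm{num},j})$, since the upwind density prefactor is positive, fills in a step the paper leaves implicit.
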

\begin{proof}
  For the well-balancedness property, it suffices to show that, for $\vec{V} = \vec{0}$, only the discrete hydrostatic balance terms~\eqref{eq:wb_semidiscrete} remain in the scheme~\eqref{eq:fv_nonconservative_form}, since $V^{\mathrm{num},j} = 0$, and that $\theta^{\mathrm{num}}$ satisfies the assumptions of Lemmas~\ref{lemma:wb_temperature} and~\ref{lemma:wb_theta}.
  Entropy stability follows from Theorem~\ref{th:es}.
\end{proof}

We further observe that this entropy-stable Riemann solver can also be used for the compressible Euler equations in conservative form, which read
\begin{equation}
  \label{eq:euler_conservative}
\begin{aligned}
&\partial_t \varrho + \nabla \cdot ( \varrho \vec{V}) = 0,\\
&\partial_t (\varrho \vec{V} )+ \nabla \cdot (\varrho \vec{V} \otimes \vec{V}) + \nabla p + \varrho \nabla \phi = \vec{0}, \\
&\partial_t (\varrho \theta) + \nabla \cdot (\varrho \theta \vec{V}) = 0,
\end{aligned}
\end{equation}
by changing the interface velocity as follows
\begin{equation}
V^{\mathrm{num},j} = \mean{v_j} - \frac{\jump{p}}{2 \mean{c} }\frac{1}{\varrho^\mathrm{num}} - \frac{\jump{\phi}}{2\mean{c}},
\end{equation}
which comes from solving the Rankine--Hugoniot condition in the incompressible limit~\cite{AControlVolumeModeloftheCompressibleEulerEquationswithaVerticalLagrangianCoordinate}; the geopotential term is added to guarantee well-balancedness.
The choice of $\varrho^{\mathrm{num}}$ is given by the Stolarsky or the logarithmic mean, depending on whether one wishes to preserve the isentropic or isothermal well-balanced property, respectively (see~\cite{artiano2025structurepreservinghighordermethodscompressible,WARUSZEWSKI2022111507}).
This stems from the fact that the momentum components do not enter the entropy stability analysis, and therefore any consistent discretization of the momentum equation may be used.
We summarize the result in the following theorem.
\begin{theorem}
\label{th:es_wb_cons}
The numerical flux for the compressible Euler equations in conservative form~\eqref{eq:euler_conservative} given by
\begin{equation}
    \begin{aligned}
\label{eq:es_wb_cons_fluxes}
    f^{\varrho,j} &= \left (\mean{\varrho} - \frac{\jump{\varrho}}{2} \mathrm{sign}(V^{\mathrm{num},j}) \right ) V^{\mathrm{num},j}, \\
     f^{\varrho v_i,j} &= \left (\mean{\varrho v_i} - \frac{\jump{\varrho v_i}}{2} \mathrm{sign}(V^{\mathrm{num},j}) \right ) V^{\mathrm{num},j} + \delta_{ij}P^{\mathrm{num},j}, \\
    f^{\varrho \theta, j} &= f^{\varrho,j} \left ( \theta^{\mathrm{num}} - \frac{\theta^{\mathrm{min}}}{2 \mean{\theta}} \jump{\theta} \mathrm{sign}(V^{\mathrm{num},j}) \right ), \\
    V^{\mathrm{num},j} &= \mean{v_j} - \frac{\jump{p}}{2 \mean{c} }\frac{1}{\varrho^\mathrm{num}} - \frac{\jump{\phi}}{2\mean{c}},\\
    P^{\mathrm{num},j} &= \mean{p} - \frac{1}{2} \mean{c} \mean{\varrho} \jump{v_j},\\
    \theta^{\mathrm{num}} &= \frac{1}{\mean{1/\theta}_{\mathrm{log}}},
    \end{aligned}
\end{equation}
where $c = \sqrt{\gamma R T}$ is the speed of sound, is entropy-stable and well-balanced for isothermal background states if $\varrho^{\mathrm{num}} = \mean{\varrho}_{\mathrm{log}}$ and for isentropic background states if $\varrho^{\mathrm{num}} = \mean{\varrho}_{\gamma}$.
\end{theorem}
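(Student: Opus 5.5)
The proof splits into two independent parts: entropy stability, and well-balancedness for the two background states.

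\emph{Entropy stability.} As observed before Theorem~\ref{th:ec} and in the remark after Theorem~\ref{th:es}, the velocity and momentum components of $\vec{\omega}_{\varrho s}$ vanish. This holds for the conservative formulation~\eqref{eq:euler_conservative} as well, as shown in~\cite{artiano2025structurepreservinghighordermethodscompressible}. Moreover, $\psi^j_{\varrho s}=0$. Hence the generalized Tadmor condition~\eqref{eq:TadmorDef_form3} reduces to~\eqref{eq:ec_condition} taken as an inequality. In particular, the momentum flux $f^{\varrho v_i,j}$ and the choice of $P^{\mathrm{num},j}$ play no role. The potential-temperature flux in~\eqref{eq:es_wb_cons_fluxes} has exactly the form~\eqref{eq:es_flux}, with $f^{\varrho,j}$ a consistent density flux. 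I then check that $\operatorname{sign}(V^{\mathrm{num},j})=\operatorname{sign}(f^{\varrho,j})$. This holds because the density factor $\mean{\varrho}-\tfrac12\jump{\varrho}\operatorname{sign}(V^{\mathrm{num},j})$ is the upwind density, which is positive. Theorem~\ref{th:es} then yields entropy stability directly. Consistency of $V^{\mathrm{num},j}$ is immediate, since all jumps vanish for equal states.

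\emph{Well-balancedness.} I apply Theorem~\ref{well_balanced_fluxes} to the conservative system, treating $\varrho\nabla\phi$ as a nonconservative product with $\vec{H}^{\mathrm{num}}=\varrho^{\mathrm{num}}$ in the momentum row, as in~\cite{artiano2025structurepreservinghighordermethodscompressible}. At rest, $\vec{V}=\vec{0}$, so $\mean{v_j}=0$.

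The key step is to show $V^{\mathrm{num},j}=0$, i.e.\ $\jump{p}/\varrho^{\mathrm{num}}+\jump{\phi}=0$, which is the discrete hydrostatic balance.

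For the isothermal state, $p=\varrho RT$ with $\varrho=\varrho_0\exp(-\phi/(RT))$. Then $\jump{\phi}=-RT\jump{\log\varrho}$ and $\jump{p}=RT\jump{\varrho}$. Therefore $\jump{p}/\mean{\varrho}_{\log}=RT\jump{\log\varrho}=-\jump{\phi}$.

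For the isentropic state, $p=\kappa\varrho^\gamma$ and $\tfrac{\gamma}{\gamma-1}\kappa\varrho^{\gamma-1}+\phi=\mathrm{const}$. Hence $\jump{\phi}=-\tfrac{\gamma\kappa}{\gamma-1}\jump{\varrho^{\gamma-1}}$. Using the Stolarsky mean, $\jump{p}/\mean{\varrho}_\gamma=\kappa\jump{\varrho^\gamma}\,\tfrac{\gamma}{\gamma-1}\jump{\varrho^{\gamma-1}}/\jump{\varrho^{\gamma}}=-\jump{\phi}$.

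With $V^{\mathrm{num},j}=0$, the remaining fluxes follow. We get $f^{\varrho,j}=0$, $f^{\varrho\theta,j}=0$ and $f^{\varrho v_i,j}=\delta_{ij}P^{\mathrm{num},j}=\delta_{ij}\mean{p}$, since $\jump{v_j}=0$. The condition of Theorem~\ref{well_balanced_fluxes} in the momentum row then reads $\mean{p}+\tfrac12\varrho^{\mathrm{num}}\jump{\phi}=p_i$. This is equivalent to $\tfrac12\bigl(\jump{p}+\varrho^{\mathrm{num}}\jump{\phi}\bigr)=0$, which is the same identity just verified. The mass and potential-temperature rows are trivially satisfied.

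\emph{Main obstacle.} The main point needing care is that the geopotential discretization in the momentum equation must use the same $\varrho^{\mathrm{num}}$ as the interface velocity. I would state this assumption explicitly, following~\cite{artiano2025structurepreservinghighordermethodscompressible,WARUSZEWSKI2022111507}, as the standard choice. A secondary point is that a sign tie at $V^{\mathrm{num},j}=0$ is harmless, since then all upwinded terms are multiplied by zero.
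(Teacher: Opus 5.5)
Your proof is correct and follows the same route as the paper. Entropy stability comes from Theorem~\ref{th:es}, because the momentum components of $\vec{\omega}_{\varrho s}$ and the potential $\psi^j_{\varrho s}$ vanish. Well-balancedness follows from $V^{\mathrm{num},j}=0$ and $P^{\mathrm{num},j}=\mean{p}$ at rest, so that only the discrete hydrostatic balance with the geopotential discretized as $\varrho^{\mathrm{num}}\jump{\phi}$ remains. You are more explicit than the paper, which asserts $V^{\mathrm{num},j}=0$ and defers to Lemmas~4.4 and~4.5 of~\cite{artiano2025structurepreservinghighordermethodscompressible}. You instead verify $\jump{p}/\varrho^{\mathrm{num}}+\jump{\phi}=0$ through the logarithmic and Stolarsky mean identities, and you check that $\operatorname{sign}(V^{\mathrm{num},j})=\operatorname{sign}(f^{\varrho,j})$ by positivity of the upwind density.
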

\begin{proof}
The proof follows analogous steps to Theorem~\ref{th:es_wb}.
Entropy stability follows from Theorem~\ref{th:es}, since the momentum components do not enter the entropy stability analysis.
For the well-balanced property, it suffices to show that, for $\vec{V} = \vec{0}$, $V^{\mathrm{num},j} = 0$ and $P^{\mathrm{num},j} = \mean{p}$, so that, discretizing the geopotential term as $\varrho^{\mathrm{num}} \jump{\phi}$, only the discrete hydrostatic balance terms of Lemmas~4.4 and~4.5 of~\cite{artiano2025structurepreservinghighordermethodscompressible} remain for the isothermal and isentropic background states, respectively.
\end{proof}

\section{Extension to discontinuous Galerkin spectral element methods and curvilinear meshes}
\label{sec:dgsem}
The two-point numerical fluxes previously constructed are the building
blocks of a high-order flux-differencing DGSEM.
Since the entropy stability and well-balancedness analyses rely solely on the summation-by-parts (SBP) property, they carry over to any diagonal-norm SBP discretization (see~\cite{Carpenter2014,RENAC20191,WARUSZEWSKI2022111507}).
As for the finite volume scheme, we employ the specific extension of the fluctuation form to high-order SBP operators shown in~\cite{artiano2026affordable}, from which the structure-preserving properties follow directly.

On each $d$-dimensional element $E$, the solution is approximated by Lagrange polynomials $\{l_k\}$ of degree $p$ on the Gauss--Lobatto--Legendre nodes.
The corresponding quadrature defines the diagonal mass matrix $\mtx{M}$ and the derivative operators $\mtx{D}^{j}$, $j\in\{1,\dots,d\}$, with $(\mtx{D}^{j})_{ik}=l_k'(\xi^j_i)$ on the reference element $[-1,1]^d$.
Together with the boundary restriction $\mtx{R}$, the boundary mass matrix $\mtx{B}$, and the normal operators $\mtx{N}^{j}$ that contain the signs of the outward reference normals, they satisfy the SBP property~\cite{gassner2013skew,Hicken2016_SBP,ranocha2018generalised}
\begin{equation}
  \label{eq:sbp_property}
  \mtx{M}\mtx{D}^{j}+(\mtx{D}^{j})^{T}\mtx{M}=\mtx{R}^{T}\mtx{B}\mtx{N}^{j}\mtx{R},
\end{equation}
which is a discrete analog of integration by parts.

Curved elements are obtained through a mapping, e.g., in two dimensions~$\vec{x}(\xi^1, \xi^2) = \vec{x}(\xi,\eta)  = ( X(\xi, \eta), Y(\xi, \eta))$, following the approach of Kopriva~\cite{Kopriva2006,Kopriva2009}.
Under the mapping $\vec{x}(\xi,\eta)$, with Jacobian $J = X_{\xi}Y_\eta - X_\eta Y_{\xi}$, the
system~\eqref{eq:nonconservative_form} becomes
\begin{equation}
  \label{eq:curvilinear_continuous}
  J\,\partial_t \vec{u}
  + \sum_{j=1}^{d} \partial_{\xi_j}\, \tilde{\vec{f}}^{j}(\vec{u})
  + \sum_{j=1}^{d} \tilde{\vec{H}}(\vec{u})\,\partial_{\xi_j}\, \tilde{\vec{g}}^{j}(\vec{u})
  = \vec{0},
\end{equation}
with the contravariant fluxes
\begin{equation}
  \tilde{\vec{f}}^{j}=\hat{\vec{n}}^{\,j}\cdot(\vec{f}^{1},\vec{f}^{2}),\qquad
  \tilde{\vec{g}}^{j}=\hat{\vec{n}}^{\,j}\circ(\vec{g}^{1},\vec{g}^{2}),\qquad
  \tilde{\vec{H}}=(\vec{H}^{1},\vec{H}^{2}),
\end{equation}
where $\circ$ is the Hadamard product. The contravariant normals $\hat{\vec{n}}^{\,j}$,
\begin{equation}
\hat{\vec{n}}^{1} = \left (Y_\eta, -X_\eta \right)^T, \quad \hat{\vec{n}}^{2} = \left (-Y_\xi, X_\xi \right)^T,
\end{equation}
are built from the metric terms $\vec{X}_\xi,\vec{X}_\eta,\vec{Y}_\xi,\vec{Y}_\eta$ and satisfy the discrete metric identities~\cite{Kopriva2006,Kopriva2009}
\begin{equation}
  \label{metric_identities}
  \mtx{D}^{1}\,\vec{Y}_\eta - \mtx{D}^{2}\,\vec{Y}_\xi = \vec{0},
  \qquad
  \mtx{D}^{2}\,\vec{X}_\xi - \mtx{D}^{1}\,\vec{X}_\eta = \vec{0},
\end{equation}
so that the free-stream preservation property holds~\cite{Kopriva2006, VisbalGaitonde1999}.
In three dimensions, the metric terms are computed using the invariant curl form of Kopriva~\cite{Kopriva2006}, a modification of the conservative curl form of Thomas and Lombard~\cite{ThomasLombard}, which satisfies the discrete metric identities (see also~\cite{CREAN2018410} for an alternative construction of the metric terms and~\cite{NOLASCO2020104555} for optimized geometric metrics).

Given these definitions, the semidiscretization reads
\begin{equation}
  \label{eq:semi_high_order}
  J_i \partial_t \vec{u}_i + \vec{\mathrm{VOL}}_i + \vec{\mathrm{SURF}}_i = \vec{0},
\end{equation}
where the volume term collects the conservative volume numerical fluxes and the antisymmetric nonconservative product,
\begin{equation}
  \label{volume_terms_curvilinear}
  \vec{\mathrm{VOL}}_i := \sum_{j=1}^{d}\Bigl(
     \sum_{k=1}^{N} 2\,\mtx{D}^{j}_{ik}\,\tilde{\vec{f}}^{\,\mathrm{vol},j}(\vec{u}_i,\vec{u}_k)
     + \sum_{k=1}^{N} \mtx{D}^{j}_{ik}\,\tilde{\vec{H}}^{\,\mathrm{vol}}(\vec{u}_i,\vec{u}_k)\,
       \jump{\tilde{\vec{g}}^{\,j}}_{ik} \Bigr),
\end{equation}
and the surface term couples neighboring elements through the surface numerical fluxes,
\begin{equation}
  \label{surface_terms_curvilinear}
  \vec{\mathrm{SURF}} := \sum_{j=1}^{d}\mtx{M}^{-1}\mtx{R}^{T}\mtx{B}\Bigl(
     \tilde{\vec{f}}^{\,\mathrm{num},j}
     + \tfrac{1}{2}\tilde{\vec{H}}^{\,\mathrm{num}}\jump{\tilde{\vec{g}}^{\,j}}
   \Bigr)
   - \sum_{j=1}^{d}\mtx{M}^{-1}\mtx{R}^{T}\mtx{B}\,\mtx{N}^j \,\mtx{R}\,\tilde{\vec{f}}^{\,j},
\end{equation}
with the discrete contravariant fluxes
\begin{equation}
  \label{eq:contravariant_fluxes}
  \begin{aligned}
    \tilde{\vec{f}}^{\,\mathrm{num},j} &= \vec{f}^{\,\mathrm{num}}\bigl(\vec{u}_{-},\vec{u}_{+},\hat{\vec{n}}\bigr),\\
    \tilde{\vec{f}}^{\,\mathrm{vol},j}(\vec{u}_i, \vec{u}_k) &= \vec{f}^{\,\mathrm{vol}}\bigl(\vec{u}_i,\vec{u}_k,\mean{\hat{\vec{n}}^{\,j}}_{ik}\bigr)
      = \mean{\hat{\vec{n}}^{\,j}}_{ik}\cdot(\vec{f}^{\mathrm{vol},1},\vec{f}^{\mathrm{vol},2}),\\
    \tilde{\vec{g}}^{\,j} &= \mean{\hat{\vec{n}}^{\,j}}\circ(\vec{g}^{1},\vec{g}^{2}),
  \end{aligned}
\end{equation}
and $\tilde{\vec{H}}^{\mathrm{vol}}$ defined analogously.
Here $\vec{f}^{\,\mathrm{num}}(\cdot,\cdot,\vec{n})$ and $\vec{f}^{\,\mathrm{vol}}(\cdot,\cdot,\vec{n})$ denote two-point numerical fluxes in a general direction $\vec{n}$, i.e., consistent in the sense $\vec{f}(\vec{u},\vec{u},\vec{n}) = \vec{n}\cdot(\vec{f}^{1},\vec{f}^{2})$.
For the numerical flux $\vec{f}^{\,\mathrm{num}}\bigl(\vec{u}_{-},\vec{u}_{+},\hat{\vec{n}}\bigr)$ at the interfaces between elements at $\xi_j=\pm1$, $\vec{u}_{-}$ denotes the
interior state, $\vec{u}_{+}$ the exterior state, and $\hat{\vec{n}}$ the outward-pointing normal.

Since the diagonal-norm SBP operators mimic integration by parts at the discrete level, the algebraic Tadmor and well-balanced characterizations of the finite-volume analysis extend to high-order SBP flux-differencing discretizations in curvilinear coordinates or on unstructured grids~\cite{RENAC20191,WARUSZEWSKI2022111507, artiano2026affordable, chan2018discretely, chan2019skew}.
The structure-preserving properties of the finite volume scheme therefore carry over at arbitrary polynomial degree and, provided~\eqref{metric_identities} holds, on general curvilinear meshes: with the volume flux~\eqref{eq:tec_ec_wb_fluxes} and the surface flux~\eqref{eq:es_tec_wb_fluxes}, the scheme is entropy-stable and well-balanced.
Before stating the theorem, we extend the entropy-stable interface flux~\eqref{eq:es_tec_wb_fluxes} to a general direction $\vec{n}$,
\begin{equation}
    \label{eq:es_flux_normal}
    \begin{aligned}
    f^{\varrho,\mathrm{num}} &= \left ( \mean{\varrho} - \frac{\jump{\varrho}}{2}\mathrm{sign}(V^{\mathrm{num}}) \right ) V^{\mathrm{num}}, \\
    \vec{f}^{\,\vec{V},\mathrm{num}} &= \mean{K}\,\vec{n}
      - \frac{\mean{c}}{2\mean{\varrho}} \left ( \jump{\varrho \vec{V}}\cdot\vec{n} \right ) \vec{\nu}
      - \frac{\bigl| \mean{\vec{V}}\cdot\vec{n} \bigr|}{2\mean{\varrho}} \jump{\varrho \vec{V}}, \\
    f^{\varrho\theta,\mathrm{num}} &= f^{\varrho,\mathrm{num}} \left ( \theta^{\mathrm{num}} - \frac{\theta^{\mathrm{min}}}{2\mean{\theta}}\jump{\theta}\,\mathrm{sign}(V^{\mathrm{num}}) \right ), \\
    V^{\mathrm{num}} &= \mean{\vec{V}}\cdot\vec{n} - \norm{\vec{n}} \left ( c_p \theta^{\mathrm{num}} \frac{\jump{\Pi}}{2\mean{c}} + \frac{\jump{\phi}}{2\mean{c}} \right ), \\
    \theta^{\mathrm{num}} &= \frac{1}{\mean{1/\theta}_{\mathrm{log}}}, \\
    v_i^{\mathrm{num}} & = \frac{\mean{\varrho v_i}}{\mean{\varrho}},
    \end{aligned}
\end{equation}
where $\vec{\nu} = \vec{n}/\norm{\vec{n}}$.

\begin{theorem}
\label{th:dgsem_curvilinear}
Consider the semidiscretization~\eqref{eq:semi_high_order} with the volume and surface terms~\eqref{volume_terms_curvilinear} and~\eqref{surface_terms_curvilinear}.
If the volume fluxes $\vec{f}^{\mathrm{vol}},\vec{H}^{\mathrm{vol}}$ are as in Theorem~\ref{th:ec_tec_wb} and the surface fluxes $\vec{f}^{\mathrm{num}},\vec{H}^{\mathrm{num}}$ as in Theorem~\ref{th:es_wb}, $\mtx{M}$ and $\mtx{R}^{T}\mtx{B}\mtx{N}^{j}\mtx{R}$ are diagonal and the metric identities~\eqref{metric_identities} hold discretely, then the vector-invariant semidiscretization is entropy-stable and well-balanced on general curvilinear meshes with slip wall or periodic boundary conditions.
\end{theorem}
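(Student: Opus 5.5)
We reduce both properties to the two-point conditions already established for the finite volume scheme, using the diagonal-norm SBP property~\eqref{eq:sbp_property} and the discrete metric identities~\eqref{metric_identities}. This follows the high-order fluctuation-form framework of~\cite{artiano2026affordable}.

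\textbf{Entropy stability.} We contract~\eqref{eq:semi_high_order} with $\vec{\omega}_i^T\mtx{M}$, using the entropy variables $\vec{\omega}_{\varrho s}$, and sum over all nodes and elements.

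For the volume term~\eqref{volume_terms_curvilinear}, we split $2\mtx{M}\mtx{D}^j = (\mtx{M}\mtx{D}^j - (\mtx{D}^j)^T\mtx{M}) + \mtx{R}^T\mtx{B}\mtx{N}^j\mtx{R}$. Using the symmetry of $\tilde{\vec f}^{\mathrm{vol},j}$ in $(i,k)$, which holds because the averaged metric $\mean{\hat{\vec n}^j}_{ik}$ is symmetric, the conservative part becomes $\sum_{i,k}\mtx{M}_{ii}\mtx{D}^j_{ik}\jump{\vec\omega}_{ik}\cdot\tilde{\vec f}^{\mathrm{vol},j}$ plus a boundary contribution. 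The nonconservative part is treated the same way. The term $\vec\omega_i\cdot\tilde{\vec H}^{\mathrm{vol}}\jump{\tilde{\vec g}}_{ik}$ is rewritten as a mean part $\mean{\vec\omega}$ plus a jump part, exactly as in the form-three analysis.

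Next we apply the generalized Tadmor condition of Theorem~\ref{th:tadmor_form3} with equality to the EC volume flux of Theorem~\ref{th:ec_tec_wb}, direction by direction with normal $\mean{\hat{\vec n}^j}$. This converts each volume pair into $\mtx{D}^j_{ik}\jump{\psi}$ weighted by metric means. For the thermodynamic entropy this step is trivial, since $\psi_{\varrho s}=0$ and only the density and $\varrho\theta$ components enter~\eqref{eq:ec_condition}. Summing with SBP and the metric identities, the volume terms collapse to the boundary term $\mtx{R}^T\mtx{B}\mtx{N}^j\mtx{R}$ applied to the entropy flux $\tilde F^j$. No spurious interior contribution remains, which is precisely the role of~\eqref{metric_identities}.

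Combined with the surface term~\eqref{surface_terms_curvilinear}, each interface then contributes $\jump{\vec\omega}\cdot\tilde{\vec f}^{\mathrm{num}} - \mean{\vec\omega}\cdot\tilde{\vec H}^{\mathrm{num}}\jump{\tilde{\vec g}} - \jump{\tilde\psi}$. For the normal-direction flux~\eqref{eq:es_flux_normal} this is nonpositive by Theorem~\ref{th:es}. The velocity components of $\vec\omega_{\varrho s}$ vanish, so the modified momentum flux and $V^{\mathrm{num}}$ are irrelevant. At a slip wall, the mirrored state gives $V^{\mathrm{num}}$ with zero mass flux contribution to the entropy, and periodic boundaries cancel pairwise. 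Together these give $\tfrac{d}{dt}\sum J\,\mtx M U_{\varrho s}\le 0$.

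\textbf{Well-balancedness.} We take a discrete hydrostatic state with $\vec V=\vec 0$, either isothermal or isentropic. In the volume kernel, every node pair $(i,k)$ in each reference direction satisfies the two-point condition of Lemma~\ref{lemma:wb}. The density, $\varrho\theta$ and kinetic-energy fluxes vanish, and $c_p\theta^{\mathrm{num}}\jump{\Pi}_{ik}+\jump{\phi}_{ik}=0$ holds by Lemmas~\ref{lemma:wb_temperature} and~\ref{lemma:wb_theta}, with the metric factor multiplying the whole bracket. Hence every summand of $\vec{\mathrm{VOL}}_i$ vanishes.

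At element interfaces we have $V^{\mathrm{num}}=0$, since the hydrostatic bracket in~\eqref{eq:es_flux_normal} vanishes identically. The dissipative momentum terms vanish because $\jump{\varrho\vec V}=0$, and the surface term reduces to the same vanishing nonconservative jump. The contributions $\mtx N^j\mtx R\tilde{\vec f}^j$ are zero because $\vec f^j(\vec u)$ with $\vec V=0$ has zero mass and $\varrho\theta$ entries and $K=0$.

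The main obstacle is the bookkeeping of the nonconservative volume term on curvilinear meshes. One must verify that the antisymmetric $\mtx D^j_{ik}\tilde{\vec H}^{\mathrm{vol}}\jump{\tilde{\vec g}}$ formulation with averaged metrics telescopes correctly under SBP to the surface fluctuation $\tfrac12\tilde{\vec H}^{\mathrm{num}}\jump{\tilde{\vec g}}$. For this step we would rely directly on the high-order extension theorem of~\cite{artiano2026affordable} rather than redo it.
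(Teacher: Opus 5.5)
Your proposal is correct and follows essentially the paper's route: contract with $\vec{\omega}^T\mtx{M}$, use SBP and the metric identities (via the high-order results of~\cite{artiano2026affordable}) to reduce the entropy rate to the interface Tadmor expression, and observe that the slip wall contributes nothing because the mirrored state gives $V^{\mathrm{num}}=0$, hence $f^{\varrho,\mathrm{num}}=f^{\varrho\theta,\mathrm{num}}=0$, together with $\tilde\psi_-=0$; well-balancedness then follows from Lemmas~\ref{lemma:wb_temperature} and~\ref{lemma:wb_theta} applied to every node pair. The curvilinear nonconservative bookkeeping you flag as the main obstacle is actually moot for this statement: $\vec{\omega}_{\varrho s}$ has zero velocity components and every nonzero row of $\vec{H}^j$ is a velocity row, so the nonconservative terms drop out of the entropy balance entirely, and at a rest state they vanish summand by summand without any telescoping.
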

\begin{proof}
  The result is a specialization of Theorems~6.1 and~6.6 of~\cite{artiano2026affordable} to the vector-invariant fluxes of Theorems~\ref{th:ec_tec_wb} and~\ref{th:es_wb}.
  The well-balanced property follows from Lemmas~\ref{lemma:wb_temperature} and~\ref{lemma:wb_theta}.
  The slip wall boundary conditions are imposed through the interface entropy-stable flux~\eqref{eq:es_flux_normal}, evaluated at the outward-pointing face normal $\hat{\vec{n}}$, using a reflected exterior state
  \begin{equation}
    \label{eq:mirrored_state}
    \vec{u}_b = \left ( \varrho_-,\; \vec{V}_- - 2\left (\vec{V}_-\cdot\hat{\vec{\nu}}\right )\hat{\vec{\nu}},\;
      (\varrho\theta)_- \right ), \qquad \hat{\vec{\nu}} = \frac{\hat{\vec{n}}}{\norm{\hat{\vec{n}}}}.
\end{equation}
We now derive the condition for entropy stability at the boundary.
For the compressible Euler equations in conservative form, this condition has already been derived by Gassner et al.~\cite{gassner2018br1} (see also~\cite{hindenlang2020wall,svard2021boundary}, where slip wall boundary conditions are proved to be entropy stable when imposed weakly through the numerical flux).
Here, we show that this result extends also to slip wall boundary conditions for the compressible Euler equations in vector-invariant form.
We contract the semidiscretization~\eqref{eq:semi_high_order} with $\vec{\omega}^{T} \mtx{M}$.
The SBP property~\eqref{eq:sbp_property}, the metric identities~\eqref{metric_identities} and the entropy conservation of the volume fluxes then
yield the curvilinear counterpart of Eq.~(178) in~\cite{artiano2026affordable}, which reads
\begin{equation}
    \label{eq:entropy_rate_element}
    \frac{\mathrm{d}}{\mathrm{d} t}\sum_{i=1}^{N}\mtx{M}_{ii}\,J_i\,U(\vec{u}_i)
      = \sum_{j=1}^{d}\sum_{i,k=1}^{N}
          \left ( \mtx{R}^{T}\mtx{B}\,\mtx{N}^{j}\mtx{R} \right )_{i,k}
          \tilde\psi^{\,j}_{k}
      - \vec{\omega}^{T}\sum_{j=1}^{d}\mtx{R}^{T}\mtx{B}\left ( \tilde{\vec{f}}^{\,\mathrm{num},j}
        + \tfrac{1}{2}\tilde{\vec{H}}^{\,\mathrm{num}}
          \jump{\tilde{\vec{g}}^{\,j}} \right ),
\end{equation}
with $\tilde\psi^{\,j} = \hat{\vec{n}}^{\,j}\cdot(\psi^1, \psi^2)$ the contravariant entropy
potential.
The entropy rate of the element then reduces to
\begin{equation}
    \label{eq:entropy_rate_faces}
    \frac{\mathrm{d}}{\mathrm{d} t}\sum_{i=1}^{N}\mtx{M}_{ii}\,J_i\,U(\vec{u}_i)
      = -\vec{1}^{T}\sum_{j=1}^{d}\mtx{R}^{T}\mtx{B}\,\tilde{\vec{F}}^{j}\,,
    \qquad
    \tilde{F}^{j} := \vec{\omega}_{-} \cdot \left (
        \tilde{\vec{f}}^{\,\mathrm{num},j}
      + \frac{1}{2}\tilde{\vec{H}}^{\,\mathrm{num}}\,
        \jump{\tilde{\vec{g}}^{\,j}} \right ) - \tilde\psi^{\,j}_{-}\,,
\end{equation}
where $\tilde{\vec{F}}^{j}$ collects the values of $\tilde{F}^{j}$ at the face nodes, $\vec{u}_{-}$ denotes the value of the element itself, and $\vec{u}_{+}$ denotes the value of the neighboring element or the prescribed boundary state at the boundary of the domain.
For a face normal $\hat{\vec n}$, we write $\tilde{\vec g}=\hat{\vec n}\circ(\vec g^1,\vec g^2)$ and $\tilde{\psi}(\vec u,\hat{\vec n}) =\hat{\vec n}\cdot(\psi^1,\psi^2)$; thus, at each face node we have
\begin{equation}
    \label{eq:one_sided_face_term}
    \tilde{F}^{j} = \tilde{F}(\vec{u}_{-}, \vec{u}_{+}, \hat{\vec{n}}) := \vec{\omega}_{-} \cdot \left (
        \vec{f}^{\,\mathrm{num}}(\vec{u}_{-}, \vec{u}_{+}, \hat{\vec{n}})
      + \frac{1}{2}\tilde{\vec{H}}^{\,\mathrm{num}}\,\jump{\tilde{\vec{g}}} \right ) - \tilde\psi ( \vec{u}_-, \hat{\vec{n}})\,,
\end{equation}
where, with a slight abuse of notation, $\hat{\vec{n}} = \pm\hat{\vec{n}}^{\,j}$ on $\xi_j = \pm 1$ is the outward normal of the element and the face quantities satisfy
\begin{equation}
    \label{eq:telescoping_property}
    \vec{f}^{\,\mathrm{num}}(\vec{u}_{+}, \vec{u}_{-}, -\hat{\vec{n}}) = -\vec{f}^{\,\mathrm{num}}(\vec{u}_{-}, \vec{u}_{+}, \hat{\vec{n}}).
\end{equation}
Note that $\tilde{\vec{H}}^{\,\mathrm{num}}$ depends only on the two states, while the normal enters the nonconservative term through $\jump{\tilde{\vec{g}}}$, so both the jump and the normal change sign and $\tilde{\vec{H}}^{\mathrm{num}} \jump{\tilde{\vec{g}}}$ is invariant under a change of the normal direction.
At an interface, we have therefore
\begin{multline}
    \tilde{F}(\vec{u}_{+}, \vec{u}_{-}, -\hat{\vec{n}})
      = \vec{\omega}_{+} \cdot \left ( \vec{f}^{\,\mathrm{num}}(\vec{u}_{+}, \vec{u}_{-}, -\hat{\vec{n}})
      + \frac{1}{2}\tilde{\vec{H}}^{\,\mathrm{num}}\,\jump{\tilde{\vec{g}}} \right ) - \tilde\psi(\vec{u}_{+}, -\hat{\vec{n}})
      \\
      = \vec{\omega}_{+} \cdot \left ( -\vec{f}^{\,\mathrm{num}}(\vec{u}_{-}, \vec{u}_{+}, \hat{\vec{n}})
      + \frac{1}{2}\tilde{\vec{H}}^{\,\mathrm{num}}\,\jump{\tilde{\vec{g}}} \right ) + \tilde\psi_{+}\,.
\end{multline}
The two contributions of the interface then sum up to
\begin{equation}
    \label{eq:interface_pairing}
    \begin{aligned}
    \tilde{F}(\vec{u}_{-}, \vec{u}_{+}, \hat{\vec{n}}) + \tilde{F}(\vec{u}_{+}, \vec{u}_{-}, -\hat{\vec{n}})
      &= \left ( \vec{\omega}_{-} - \vec{\omega}_{+} \right ) \cdot \tilde{\vec{f}}^{\,\mathrm{num}}
      + \frac{1}{2} \left ( \vec{\omega}_{-} + \vec{\omega}_{+} \right ) \cdot \tilde{\vec{H}}^{\,\mathrm{num}}\jump{\tilde{\vec{g}}}
      - \tilde\psi_{-} + \tilde\psi_{+} \\
      &= - \left ( \jump{\vec{\omega}}\cdot\tilde{\vec{f}}^{\,\mathrm{num}}
        - \mean{\vec{\omega}}\cdot\tilde{\vec{H}}^{\,\mathrm{num}}\jump{\tilde{\vec{g}}}
        - \jump{\tilde\psi} \right ).
    \end{aligned}
\end{equation}
Summing up the contributions of all elements, the entropy rate reads
\begin{multline}
    \label{eq:entropy_rate_global}
    \frac{\mathrm{d}}{\mathrm{d} t}
      \sum_{\mathrm{elements}}\sum_{i=1}^{N}\mtx{M}_{ii}\,J_i\,U(\vec{u}_i)
      \\= \sum_{\substack{\mathrm{interior}\\ \mathrm{interfaces}}}\sum_{l}\mtx{B}_{ll}
        \left [
          \jump{\vec{\omega}}\cdot\tilde{\vec{f}}^{\mathrm{num}}
          - \mean{\vec{\omega}}\cdot\tilde{\vec{H}}^{\mathrm{num}}\jump{\tilde{\vec{g}}} -\jump{\tilde\psi}
        \right ]_l
      - \sum_{\substack{\mathrm{boundary}\\ \mathrm{interfaces}}}\sum_{l}\mtx{B}_{ll}\,\tilde{F}_{l}\,.
\end{multline}
Here, the sum over $l$ runs over all quadrature nodes on the interface.
The sum in square brackets is nonpositive by the entropy stability condition~\eqref{eq:TadmorDef_form3} and positivity of the weights $\mtx{B}_{ll}$.
At a boundary interface, the exterior state is the prescribed $\vec{u}_b$ and the condition for entropy stability reads
\begin{equation}
    \label{eq:wall_boundary_term}
    \tilde{F}_b = \vec{\omega}_{-} \cdot \left (
        \vec{f}^{\,\mathrm{num}}(\vec{u}_{-}, \vec{u}_{b}, \hat{\vec{n}})
      + \frac{1}{2}\vec{H}^{\,\mathrm{num}}(\vec{u}_{-}, \vec{u}_{b})\,
        \jump{\tilde{\vec{g}}} \right ) - \tilde\psi_{-} \geq 0
\end{equation}
at every boundary quadrature node. Note that condition~\eqref{eq:wall_boundary_term} generalizes the one derived in~\cite{gassner2018br1,svard2021boundary} by including the nonconservative contribution.

We now insert the reflection state~\eqref{eq:mirrored_state}.
Considering that $\tilde{\psi}_- = 0$ and that the only nonzero components of the entropy variables~\eqref{eq:ec_entropy_variables} are the density and potential-temperature ones, only $f^{\varrho,\mathrm{num}}$ and $f^{\varrho\theta,\mathrm{num}}$ enter the boundary term~\eqref{eq:wall_boundary_term}.
For the boundary state $\vec{u}_b$ in~\eqref{eq:mirrored_state}, $\mean{\vec{V}} \cdot \hat{\vec{n}} = 0$ and $\jump{\Pi} = \jump{\phi} = 0$, so $f^{\varrho,\mathrm{num}} = f^{\varrho\theta,\mathrm{num}} = 0$ and the boundary term vanishes.
Therefore, $\tilde{F}_b = 0$ for any choice of the momentum flux; the slip wall does not contribute to the entropy rate, and the semidiscretization is entropy stable up to the boundary.
\end{proof}

\begin{remark}
We emphasize that the semidiscretization of Theorem~\ref{th:dgsem_curvilinear} ensures entropy stability (and total energy conservation in the volume kernel) at the cost of a single nonlinear mean value, i.e., the inverse of the logarithmic mean.
In contrast, the schemes presented in~\cite{WARUSZEWSKI2022111507,artiano2025structurepreservinghighordermethodscompressible} require the evaluation of two nonlinear mean values.
Moreover, this scheme also preserves locally and globally two background states, while the schemes derived in~\cite{WARUSZEWSKI2022111507,artiano2025structurepreservinghighordermethodscompressible} can preserve only one given background state.
Furthermore, given the particular form of the nonconservative terms, which is antisymmetric, the computational cost of the flux-differencing DGSEM implementation can also be reduced for the nonconservative terms by computing only the upper or lower triangular part of the derivative operator~\cite{ranocha2023efficient, artiano2026affordable} (see also the implementation in our reproducibility repository~\cite{artiano2026vectorinvariantRepro}).
\end{remark}

\section{Numerical results}
\label{sec:numerical_results}
In this section, we validate the structure-preserving methods derived for the vector-invariant form~\eqref{eq:vector_invariant_form}, assessing the theoretical findings of the previous sections.
We have implemented our methods in Julia~\cite{bezanson2017julia} using Trixi.jl~\cite{ranocha2022adaptive,schlottkelakemper2021purely}.
Unless stated otherwise, the time integration is carried out with the third-order, four-stage SSP Runge--Kutta method~\cite{kraaijevanger1991contractivity} implemented in OrdinaryDiffEq.jl~\cite{rackauckas2017differentialequations}.
We generated the figures using Makie.jl~\cite{danisch2021makie}.
All source code required to reproduce the numerical results is available in our reproducibility repository~\cite{artiano2026vectorinvariantRepro}.

The section is organized as follows: we first assess the conservation of entropy and total energy for the vector-invariant formulation. We then validate the well-balanced schemes of Section~\ref{sec:well_balanced} and perform a convergence study on the inertia-gravity wave test case.
Subsequently, the methods are tested on linear hydrostatic and linear non-hydrostatic mountain waves, a breaking-wave test case, and a critical-layer test case.
Finally, the robustness of the presented method is tested on the baroclinic instability benchmark on a cubed-sphere mesh.
\subsection{Conservation of entropy and total energy}
\label{sec:results_conservation}
First, we analyze the conservation properties of the derived numerical fluxes in the absence of a geopotential term. We assess these properties on the three-dimensional inviscid Taylor--Green vortex, a canonical test for the transition to turbulence and the generation of under-resolved scales.
The initial condition is~\cite{gassner2016split}
\begin{equation}
    \begin{aligned}
            \varrho(\vec{x},0) &= 1, \\
            v_1(\vec{x},0) &= \sin(x) \cos(y) \cos(z), \\
            v_2(\vec{x},0) &= -\cos(x) \sin(y) \cos(z),\\
            v_3(\vec{x},0) &= 0, \\
            p(\vec{x},0) &= \frac{100}{\gamma} + \frac{(\cos(2x) + \cos(2y))(\cos(2z) + 2)}{16}.
    \end{aligned}
\end{equation}
The solution is computed on $[-\pi, \pi]^3$ using $4$ elements in each direction, with periodic boundary conditions and $\text{CFL} = 0.01$, up to the final time $T = 5$.
For this test case only, we employ the numerical fluxes of Theorem~\ref{th:ec_tec_wb} for both the volume and surface numerical fluxes.
Table~\ref{tab:tgv_conservation} reports, for the total-energy- and entropy-conservative (ETEC) flux~\eqref{eq:tec_ec_wb_fluxes} and polynomial degrees $p \in \{2,3,4,5\}$, the conservation error $|\int U - \int U_0|$ (computed using the quadrature induced by the mass matrix $\vec{M}$) and the semidiscrete entropy rates $\vec{\omega}^T \vec{M} \partial_t \vec{u}$ of the thermodynamic entropy $U_{\varrho s}$ and of the total energy $U_{\varrho E}$.
Both the thermodynamic entropy and the total energy are conserved up to machine precision for all polynomial degrees, the entropy rates remaining at the level of round-off, in agreement with the theoretical results.
\begin{table}[h!]
    \centering
    \caption{Maximum entropy error $| \int U_{\varrho s} - \int U_{\varrho s,0}|$ and
    entropy residual $\vec{\omega}_{\varrho s}^T \vec{M} \partial_t \vec{u}$, and total-energy
    error $|\int U_{\varrho E} - \int U_{\varrho E,0}|$ and total-energy residual
    $\vec{\omega}_{\varrho E}^T \vec{M} \partial_t \vec{u}$, for the Taylor--Green vortex for
    different polynomial degrees $p$ up to the final time $T = 5$.}
    \label{tab:tgv_conservation}
    \begin{tabular}{ccccc}
        \toprule
        & \multicolumn{2}{c}{Thermodynamic entropy} & \multicolumn{2}{c}{Total energy} \\
        \cmidrule(lr){2-3} \cmidrule(lr){4-5}
        $p$ & $|\int U_{\varrho s} - \int U_{\varrho s,0}|$ & $\vec{\omega}_{\varrho s}^T \vec{M} \partial_t \vec{u}$ & $|\int U_{\varrho E} - \int U_{\varrho E,0}|$ & $\vec{\omega}_{\varrho E}^T \vec{M} \partial_t \vec{u}$ \\
        \midrule
        2 & \num{1.91e-11} & \num{2.05e-18} & \num{5.55e-10} & \num{3.73e-15} \\
        3 & \num{2.57e-11} & \num{1.10e-17} & \num{5.24e-10} & \num{1.05e-15} \\
        4 & \num{3.23e-11} & \num{4.12e-18} & \num{6.53e-10} & \num{6.37e-16} \\
        5 & \num{3.86e-11} & \num{2.99e-18} & \num{7.86e-10} & \num{1.29e-15} \\
        \bottomrule
    \end{tabular}
\end{table}

\subsection{Well-balancedness}
\label{sec:results_balance}
We assess the well-balancing properties of Lemmas~\ref{lemma:wb_temperature} and~\ref{lemma:wb_theta} on a curvilinear mesh, considering two canonical hydrostatic background states: an isothermal atmosphere ($T_0 = \SI{250}{K}$) and a constant-potential-temperature atmosphere ($\theta_0 = \SI{300}{K}$).
In both cases, the velocity field is initially at rest, and the density and pressure satisfy the hydrostatic balance.
The domain is discretized into $16 \times 16$ elements with polynomial degree~$3$, and the time step is fixed to $\Delta t = \SI{0.025}{s}$. We employ the smooth curvilinear mapping~\cite{artiano2025structurepreservinghighordermethodscompressible}
\begin{equation}
x = 500(1 + \xi + 0.1 \sin(\pi \xi) \sin(\pi \eta)), \quad z = 500(1 + \eta + 0.1 \sin(\pi \xi) \sin(\pi \eta)),
\end{equation}
which produces the warped mesh shown in Figure~\ref{fig:wb_mesh}. The simulations are run up to ten days ($T = \SI{864000}{s}$).
\begin{figure}[tbp]
    \centering
    \includegraphics[width = 0.55\textwidth]{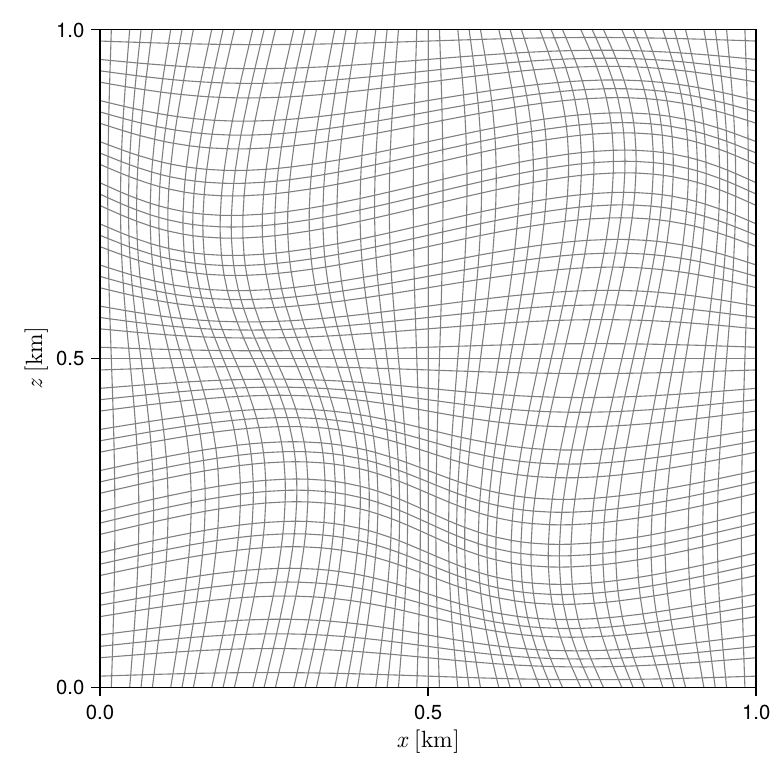}
    \caption{Warped mesh used for the well-balancedness test cases.}
    \label{fig:wb_mesh}
\end{figure}
Figure~\ref{fig:balance} reports the $L_2$ error of the horizontal and vertical velocity over the ten days, both in double precision (\texttt{Float64} in Julia) and double-double precision (using MultiFloats.jl \cite{MultiFloatsJL}).
% see https://github.com/dzhang314/MultiFloats.jl/issues/88 for the reference
For the stably stratified isothermal atmosphere ($\mathrm{d}\theta/\mathrm{d}z > 0$, so that $\mathcal{N}^2 > 0$), the hydrostatic balance is preserved up to machine precision throughout, in both precisions.
The constant potential temperature background state is instead \emph{neutrally} stratified ($\mathrm{d}\theta/\mathrm{d}z = 0$, $\mathcal{N}^2 = 0$): the hydrostatic rest state is metastable.
Therefore, a physical instability is triggered, which causes the growth of perturbations in double precision, growing to $\mathcal{O}(10^{-4})$ over ten days.
Repeating the simulation in double-double precision lowers the round-off floor by more than ten orders of magnitude, and the well-balancedness error remains at machine precision, confirming, as already observed in~\cite{artiano2025structurepreservinghighordermethodscompressible}, that this growth is entirely due to floating-point round-off and not to a deficiency of the well-balanced scheme.
\begin{figure}[tbp]
    \centering
    \includegraphics[width = \textwidth]{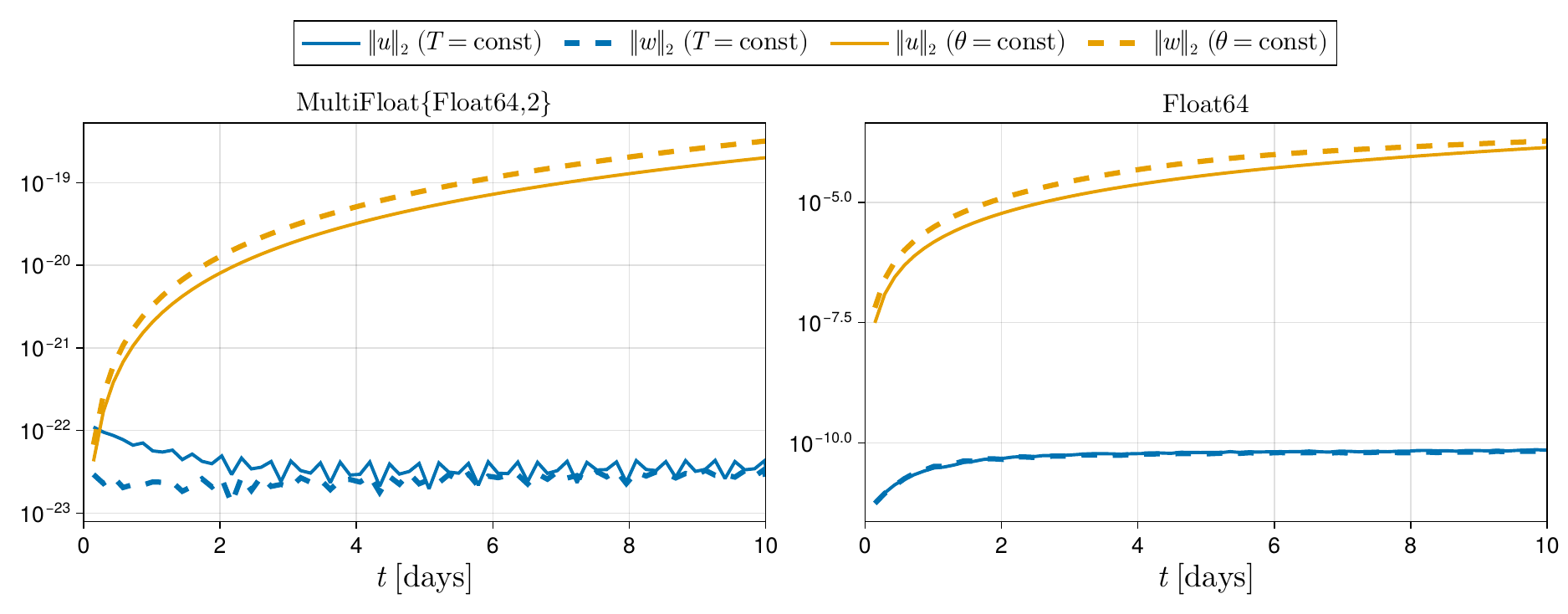}
    \caption{$L_2$ error of the horizontal ($\|u\|_2$) and vertical ($\|w\|_2$) velocity for the isothermal ($T = \mathrm{const}$) and constant-potential-temperature ($\theta = \mathrm{const}$) background states over ten days, in double-double precision (\texttt{MultiFloat\{Float64,2\}}, left) and double precision (\texttt{Float64}, right), for the vector-invariant formulation on a curvilinear mesh.}
    \label{fig:balance}
\end{figure}

\subsection{Inertia-gravity waves}
\label{sec:results_igw}
The high-order convergence of the DGSEM is assessed using the linearized solution of Baldauf and Brdar~\cite{baldauf2013} for inertia-gravity waves in a channel. The atmosphere is initialized with an isothermal background state at $T_0 = \SI{250}{K}$, perturbed by a warm bubble
\begin{equation}
    T' = \Delta T \sin \left (\frac{ \pi z}{H} \right ) \exp \left (-\frac{(x - x_c)^2}{a^2}\right ),
\end{equation}
with $H = \SI{10}{km}$, channel length $L = \SI{300}{km}$, $x_c = \SI{100}{km}$, $a = \SI{5}{km}$, and geopotential $\phi = g z$, $g = \SI{9.81}{m/s^2}$.
The solution is computed up to $t = \SI{1800}{s}$ with $\Delta T = \SI{1.0e-3}{K}$ to avoid nonlinear saturation effects~\cite{blaise2016,BALDAUF2021110635,WARUSZEWSKI2022111507}.
The domain is discretized with $N_x \times N_z$ elements at a fixed ratio $\Delta x / \Delta z = 3$ and $\text{CFL} = 0.1$.
Figure~\ref{fig:igw} reports the $L_2$ errors of the horizontal and vertical velocity, of the temperature and of the pressure as a function of the mesh size $\Delta x$, for polynomial degrees $p \in \{2,3,4\}$.
The structure-preserving scheme attains the expected high-order convergence rates.
As reported in~\cite{blaise2016,BALDAUF2021110635,WARUSZEWSKI2022111507}, saturation errors due to nonlinear effects appear at polynomial degree~$4$, and are particularly pronounced in the horizontal velocity component.
\begin{figure}[!htb]
    \centering
    \includegraphics[width = \textwidth]{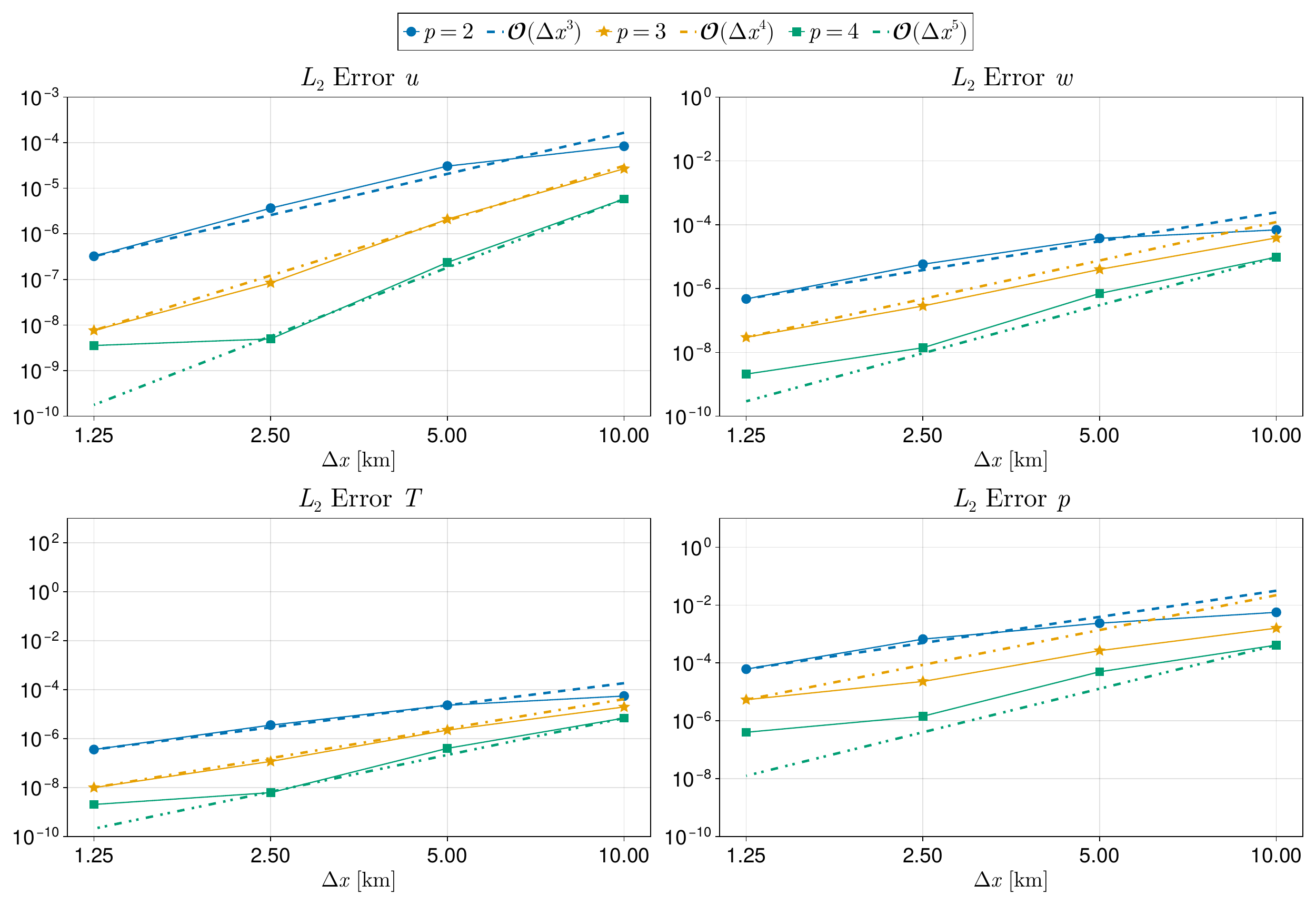}
    \caption{$L_2$ error of the horizontal velocity (top left), vertical velocity (top right), temperature (bottom left) and pressure (bottom right) as a function of the mesh size, for polynomial degrees $p \in \{2,3,4\}$, for the vector-invariant formulation.}
    \label{fig:igw}
\end{figure}

The convergence study is repeated on a curvilinear mesh, using the transformation of~\cite{WARUSZEWSKI2022111507}.
As shown in Figure~\ref{fig:igw_warped}, the observed convergence rates agree with those obtained in~\cite{WARUSZEWSKI2022111507,artiano2025structurepreservinghighordermethodscompressible}.
\begin{figure}[!htb]
    \centering
    \includegraphics[width = \textwidth]{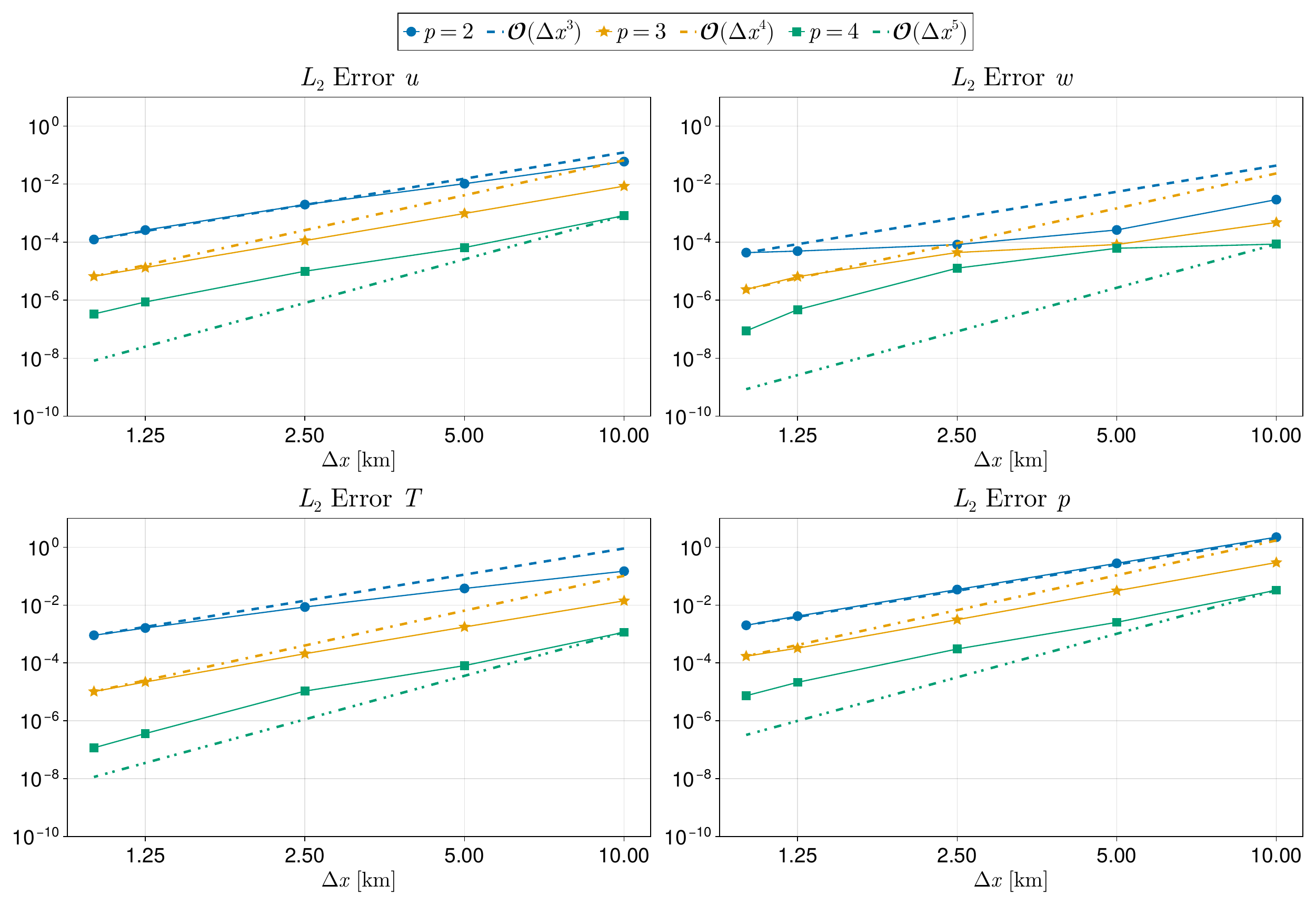}
    \caption{$L_2$ error of the horizontal velocity (top left), vertical velocity (top right), temperature (bottom left) and pressure (bottom right) as a function of the mesh size, for polynomial degrees $p \in \{2,3,4\}$, for the vector-invariant formulation on the warped mesh.}
    \label{fig:igw_warped}
\end{figure}

Finally, Figure~\ref{fig:igw_contour} shows the vertical velocity $w$ at the final time $t = \SI{1800}{s}$, computed on a $90 \times 12$ mesh with polynomial degree $p = 3$ and $\text{CFL} = 1$.
\begin{figure}[tbp]
    \centering
    \includegraphics[width = \textwidth]{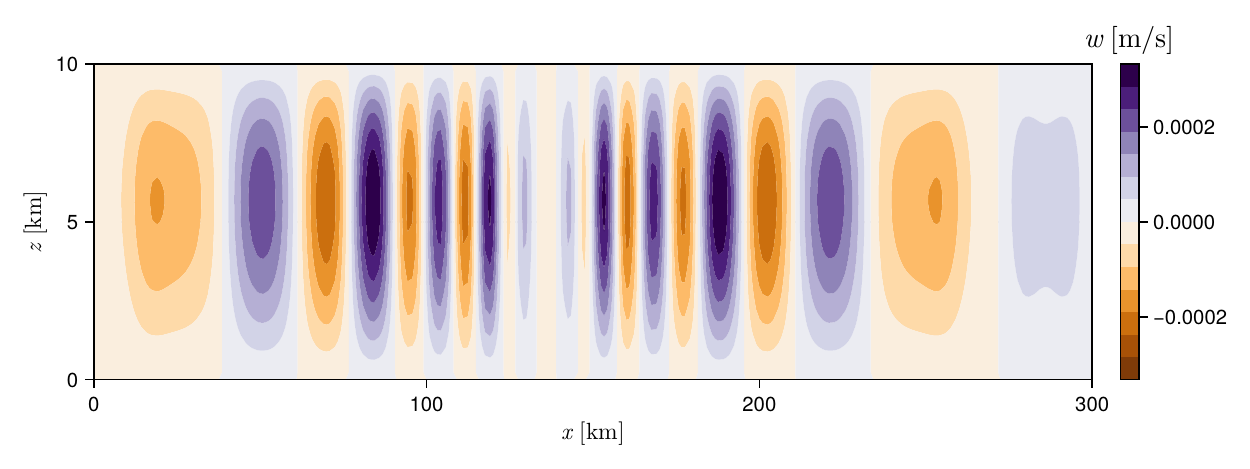}
    \caption{Vertical velocity $w$ of the inertia-gravity wave at the final time $t = \SI{1800}{s}$, computed with $90 \times 12$ elements and polynomial degree $p = 3$, for the vector-invariant formulation.}
    \label{fig:igw_contour}
\end{figure}

\subsection{Numerical results with orography}
\label{sec:results_orography}
We now present numerical results for four idealized flows over orography.
To avoid spurious reflections at the domain boundaries, non-reflecting boundary conditions are prescribed at the top and lateral boundaries through Rayleigh damping layers~\cite{ORLANDO2023115124,GIRFOGLIO2025106510}. On the right-hand side of the momentum and potential-temperature equations, we add the source terms
\begin{equation}
    \begin{aligned}
    \vec{q}_{\vec{V}} &= (\vec{V} - \vec{V}^0)( S_{\vec{V}}  + S_{h_1} + S_{h_2}),\\
    q_{\varrho \theta} &= \varrho (\theta - \theta^0)( S_{\vec{V}}  + S_{h_1} + S_{h_2}),
    \end{aligned}
\end{equation}
where the Rayleigh damping profiles $S_{\vec{V}}$, $S_{h_1}$, and $S_{h_2}$ are defined as
\begin{equation}
\begin{aligned}
    & S_{\vec{V}}= \begin{cases}0, & z<z_B, \\
    -\alpha \sin ^2\left[\frac{\pi}{2}\frac{z-z_B}{z_T-z_B}\right], & z \geq z_B,\end{cases}\\
    & S_{h_1}= \begin{cases}0, & x<x_{B}, \\
    -\alpha \sin ^2\left[\frac{\pi}{2}\frac{x-x_{B}}{x_T-x_{B}}\right], & x \geq x_{B},\end{cases}\\
    & S_{h_2}= \begin{cases}0, & x>-x_{B}, \\
    -\alpha \sin ^2\left[\frac{\pi}{2}\frac{x+x_{B}}{x_B-x_T}\right], & x \leq -x_{B}.\end{cases}
    \end{aligned}
\end{equation}
Moreover, for the following test cases in this subsection, the time integration is done with the fourth-order, nine-stage, low-storage FSAL explicit Runge--Kutta method with adaptive time-stepping developed in~\cite{Ranocha2022Optimal}, as implemented in OrdinaryDiffEq.jl~\cite{rackauckas2017differentialequations}.
\subsubsection{Linear hydrostatic mountain}
\label{sec:results_hydrostatic}
The first benchmark is the linear hydrostatic mountain~\cite{ACompressibleModelfortheSimulationofMoistMountainWaves,GIRALDO20083849}, with the \textit{versiera di Agnesi} profile
\begin{equation}
    h(x) = \frac{h_c}{1 + (x/a_c)^2}.
    \label{eq:versiera}
\end{equation}
The domain is $[-120, 120] \times [0, 30]~\mathrm{km}^2$, with $h_c = \SI{1}{m}$ and $a_c = \SI{10}{km}$. The atmosphere has a constant background temperature $T_0 = \SI{250}{K}$ and a constant mean flow $\overline{u} = \SI{20}{m/s}$.
The Exner pressure is $\overline{\Pi} = \exp(-\mathcal{N}^2 z / g)$, with $\mathcal{N} = g/\sqrt{c_p T_0}$ denoting the Brunt--V\"ais\"al\"a frequency. Since $\mathcal{N} a_c / \overline{u} > 1$, the flow is in the hydrostatic regime~\cite{GIRALDO20083849}.
A free-slip condition is used at the bottom, and non-reflecting conditions at the top and lateral boundaries, with $\alpha = 0.035$, $z_B = \SI{15}{km}$, $z_T = \SI{30}{km}$, $x_B = \SI{60}{km}$, $x_T = \SI{120}{km}$. We use $100 \times 60$ elements and polynomial degree $p = 3$.
The quantity of interest is the analytical momentum flux~\cite{Smith1979TheIO}
\begin{equation}
    m = \int_{-\infty}^{\infty} \overline{\varrho}(z)\, u'(x,z)\, w'(x,z)\, \mathrm{d}x,
    \qquad
    m^H = - \frac{\pi}{4}\overline{\varrho}_s \overline{u}_s \mathcal{N} h_c^2,
\end{equation}
where $u'$, $w'$ are the velocity perturbations and $\overline{\varrho}_s$, $\overline{u}_s$ the surface background density and velocity.
The normalized momentum flux $m/m^H$ should approach~$1$.
Figure~\ref{fig:hydrostatic_momentum} shows the evolution of the normalized momentum flux at $T = \SI{2.5}{h}, \SI{5}{h}, \SI{7.5}{h}, \SI{10}{h}, \SI{12.5}{h}$, which agrees with the results obtained in~\cite{artiano2025structurepreservinghighordermethodscompressible,GIRALDO20083849, ORLANDO2023115124}.
\begin{figure}[tbp]
    \centering
    \includegraphics[width = 0.7\textwidth]{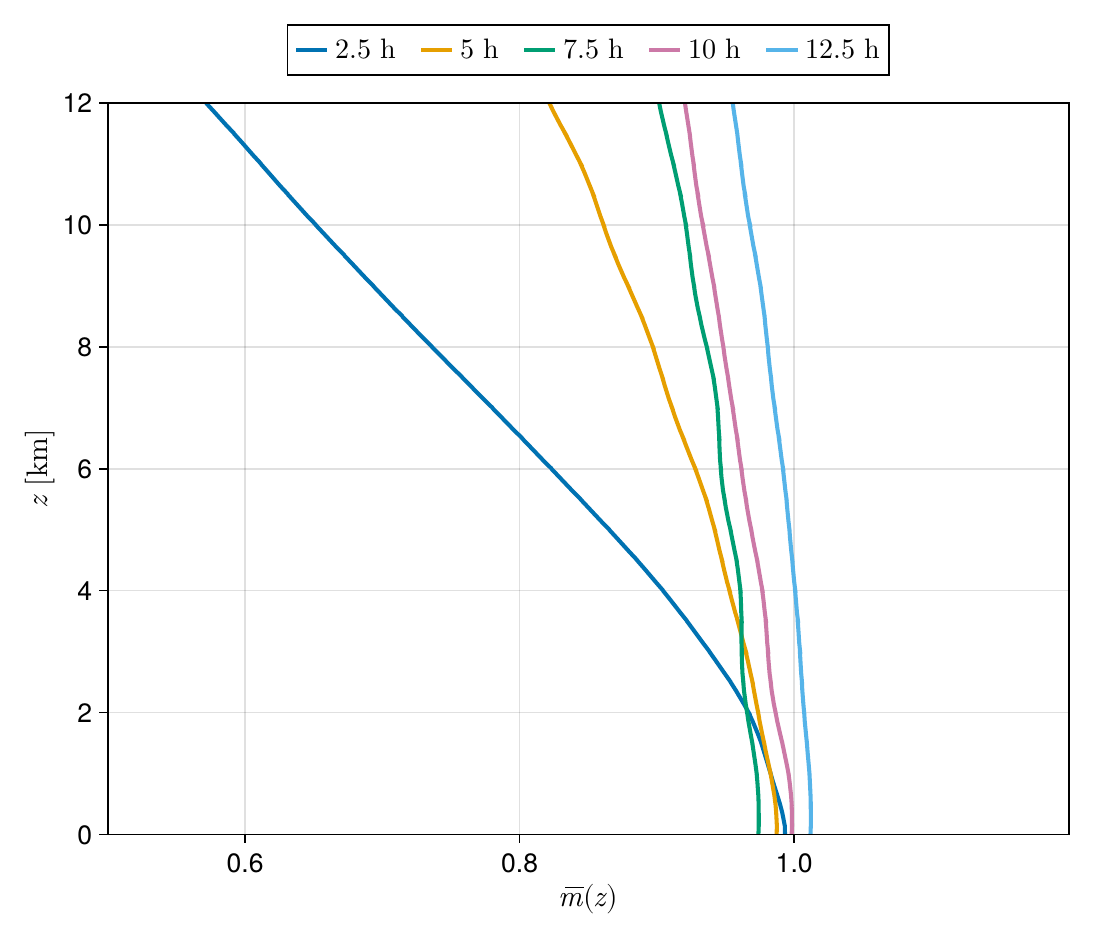}
    \caption{Evolution of the normalized momentum flux for the linear hydrostatic mountain with $100 \times 60$ elements and $p = 3$, for the vector-invariant formulation.}
    \label{fig:hydrostatic_momentum}
\end{figure}
Figure~\ref{fig:hydrostatic_contour} shows the horizontal and vertical velocity components at the final time $T = \SI{12.5}{h}$, analogous to the contour obtained in~\cite{GIRALDO20083849,artiano2025structurepreservinghighordermethodscompressible}.
\begin{figure}[tbp]
    \centering
    \includegraphics[width = \textwidth]{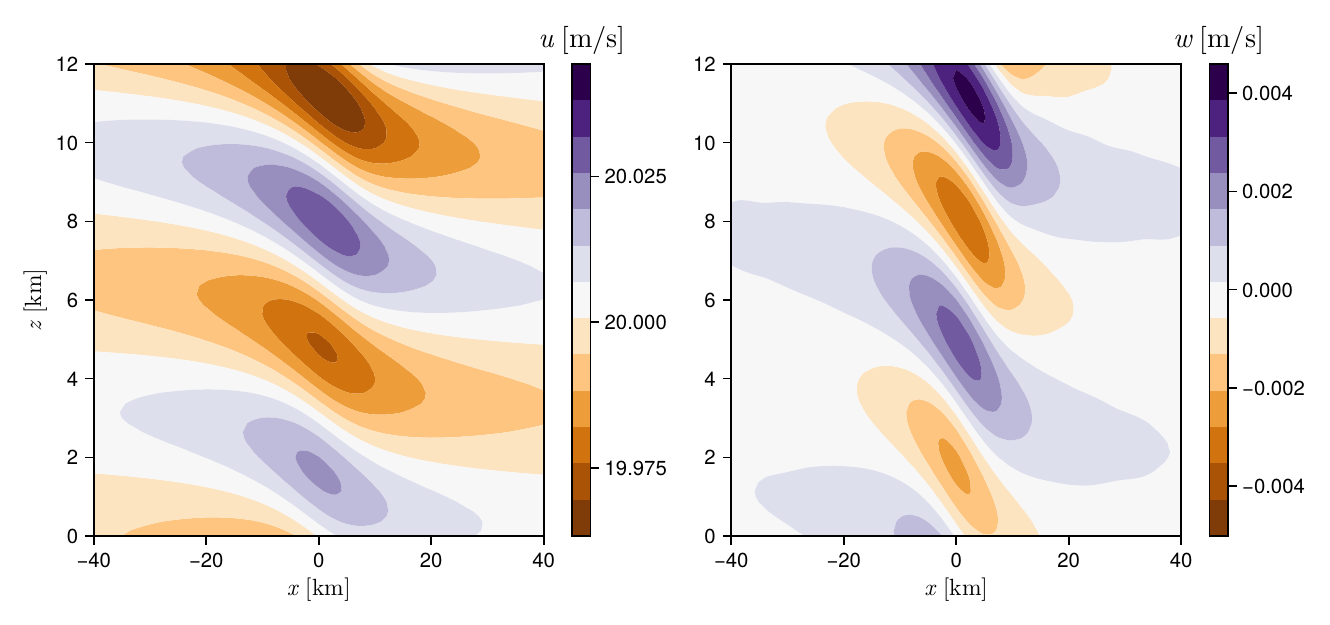}
    \caption{Contours of the horizontal (left) and vertical (right) velocity components for the linear hydrostatic mountain with $100 \times 60$ elements and $p = 3$, for the vector-invariant formulation.}
    \label{fig:hydrostatic_contour}
\end{figure}

\subsubsection{Linear non-hydrostatic mountain}
\label{sec:results_nonhydrostatic}
The second benchmark focuses on linear non-hydrostatic mountain waves~\cite{ACompressibleModelfortheSimulationofMoistMountainWaves,GIRALDO20083849}. The initial state is a uniform horizontal mean flow $\overline{u} = \SI{10}{m/s}$ within a uniformly stratified atmosphere with constant Brunt--V\"ais\"al\"a frequency $\mathcal{N} = \SI{0.01}{s^{-1}}$.
The Exner pressure and potential temperature follow the hydrostatic balance
\begin{equation}
\overline{\Pi}(z) = 1 + \frac{g^2}{c_p \theta_0 \mathcal{N}^2} \left( \exp\left(-\frac{\mathcal{N}^2}{g} z\right) - 1 \right),
\qquad
\theta(z) = \theta_0 \exp\left(\frac{\mathcal{N}^2}{g} z\right),
\end{equation}
with $\theta_0 = \SI{280}{K}$, $p = p_0 \overline{\Pi}^{c_p/R}$, $T = \theta \overline{\Pi}$, and $\overline{\varrho} = p/(R T)$. The mountain profile is again the \textit{versiera di Agnesi}~\eqref{eq:versiera} with $h_c = \SI{1}{m}$ and $a_c = \SI{1}{km}$.
The domain is $[-72, 72] \times [0, 30]~\mathrm{km}^2$ and the simulation spans $t \in [0, 8]$ hours. No-flux conditions are imposed at the bottom and non-reflecting conditions at the top and lateral boundaries, with damping layers in the last \SI{15}{km} vertically and \SI{32}{km} horizontally, and $\alpha = 0.03$. We use $200 \times 50$ elements and $p = 3$.
Since $\mathcal{N} a_c / \overline{u} = 1$, the flow is in the non-hydrostatic regime~\cite{SimpleTestsofaSemiImplicitSemiLagrangianModelon2DMountainWaveProblems, GIRALDO20083849}.
The normalized momentum flux is computed analogously to the hydrostatic case and normalized by $m^{NH} = 0.457\, m^H$~\cite{AnUpperBoundaryConditionPermittingInternalGravityWaveRadiationinNumericalMesoscaleModels}.
Figure~\ref{fig:nonhydrostatic_momentum} shows the evolution of the normalized momentum flux at $T = \SI{2}{h}, \SI{4}{h}, \SI{6}{h}, \SI{8}{h}$. The momentum progressively approaches the expected value of~$1$.
\begin{figure}[tbp]
    \centering
    \includegraphics[width = 0.7\textwidth]{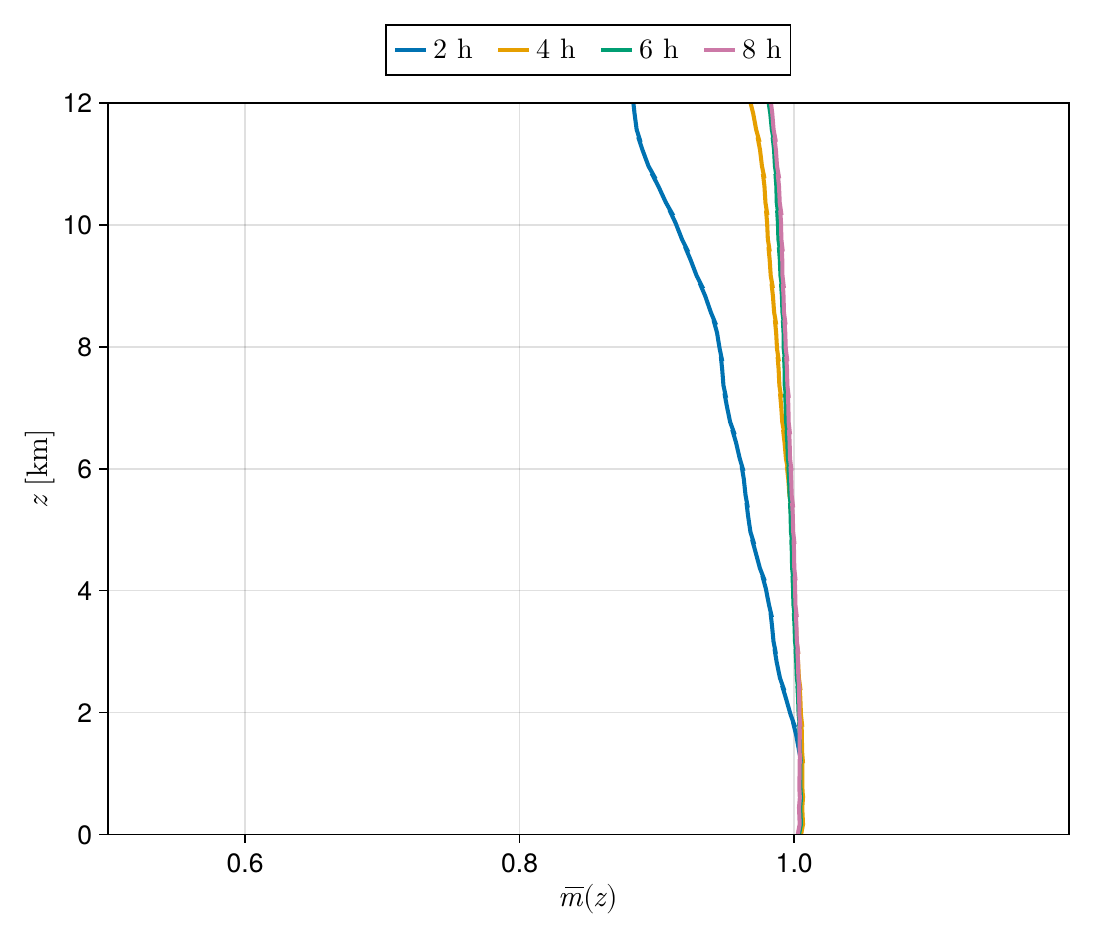}
    \caption{Evolution of the normalized momentum flux for the linear non-hydrostatic mountain with $200 \times 50$ elements and $p = 3$, for the vector-invariant formulation.}
    \label{fig:nonhydrostatic_momentum}
\end{figure}
Figure~\ref{fig:nonhydrostatic_contour} reports the horizontal and vertical velocity components at the final time, in agreement with the reference solutions~\cite{ACompressibleModelfortheSimulationofMoistMountainWaves,GIRALDO20083849}.
\begin{figure}[tbp]
    \centering
    \includegraphics[width = \textwidth]{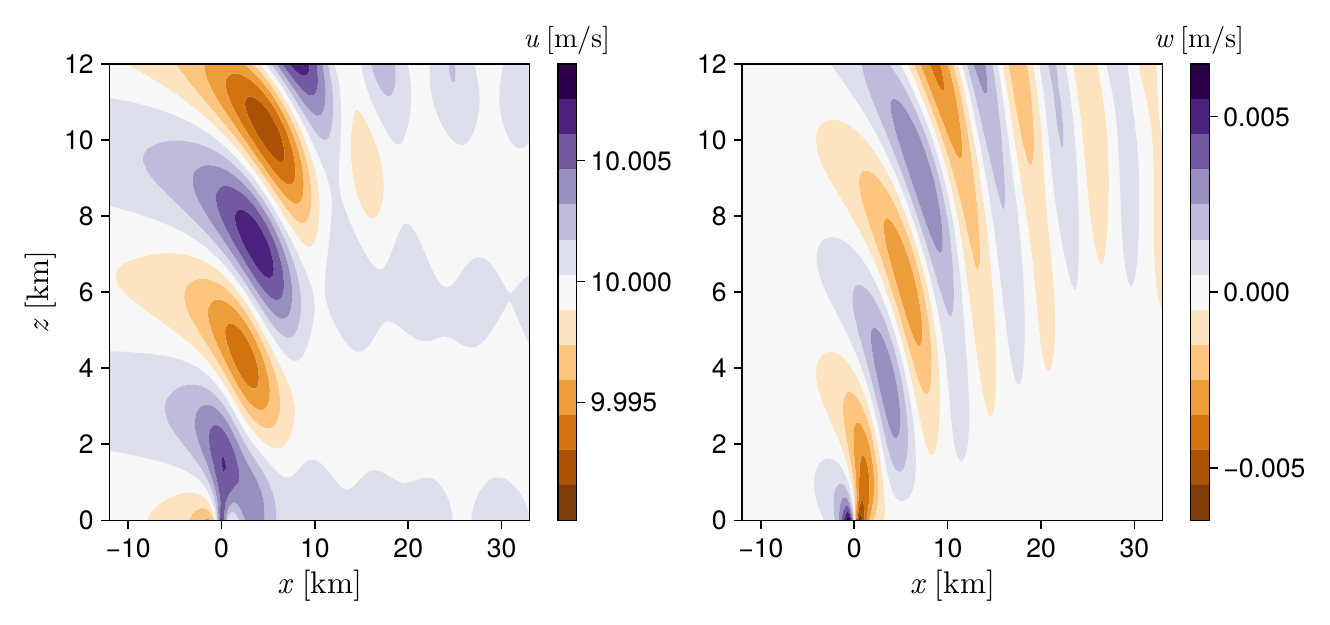}
    \caption{Contours of the horizontal (left) and vertical (right) velocity components for the linear non-hydrostatic mountain with $200 \times 50$ elements and $p = 3$, for the vector-invariant formulation.}
    \label{fig:nonhydrostatic_contour}
\end{figure}

\subsubsection{Breaking gravity waves}
\label{sec:results_breaking}
The third orographic benchmark is the breaking lee-wave test of Klein~\cite{klein2009asymptotics}, originally proposed by Smolarkiewicz and Margolin~\cite{smolarkiewicz1997forward}, which follows a large-amplitude mountain wave from its generation up to the onset of wave breaking.
A constant mean flow $\overline{u} = \SI{10}{m/s}$ is prescribed over an isothermal atmosphere with Brunt--V\"ais\"al\"a frequency $\mathcal{N} = \SI{0.01}{s^{-1}}$ and surface temperature $T_0 = g^2/(c_p \mathcal{N}^2)$, with $g = \SI{10}{m/s^2}$ and the modified ratio of specific heats $\gamma = 1.0697$ adopted in~\cite{klein2009asymptotics}. The orography is the \textit{versiera di Agnesi}~\eqref{eq:versiera} with $h_c = 200\pi \approx \SI{628}{m}$ and $a_c = \SI{1}{km}$, so that the wave amplitude lies well within the nonlinear regime. The domain is $[-60, 60] \times [0, 60]~\mathrm{km}^2$, discretized with $50 \times 30$ elements and polynomial degree $p = 4$; non-reflecting conditions are enforced through Rayleigh damping layers in the upper \SI{20}{km} and in the outer \SI{20}{km} on each lateral side, with $\alpha = 1/600$. The simulation is run up to $T = \SI{3}{h}$.
Figure~\ref{fig:breaking} shows the isentropic lines at $t = \SI{2.5}{h}$ and $t = \SI{3.0}{h}$. At \SI{2.5}{h}, the lee wave propagates upward and steepens with height, in close agreement with the reference solution of Klein~\cite{klein2009asymptotics}. At \SI{3.0}{h}, wave breaking sets in, generating small-scale instabilities. As already pointed out in~\cite{klein2009asymptotics}, beyond this stage the flow is governed by the dissipation of the numerical scheme and a quantitative comparison is no longer meaningful.
\begin{figure}[tbp]
    \centering
    \includegraphics[width = 0.75\textwidth]{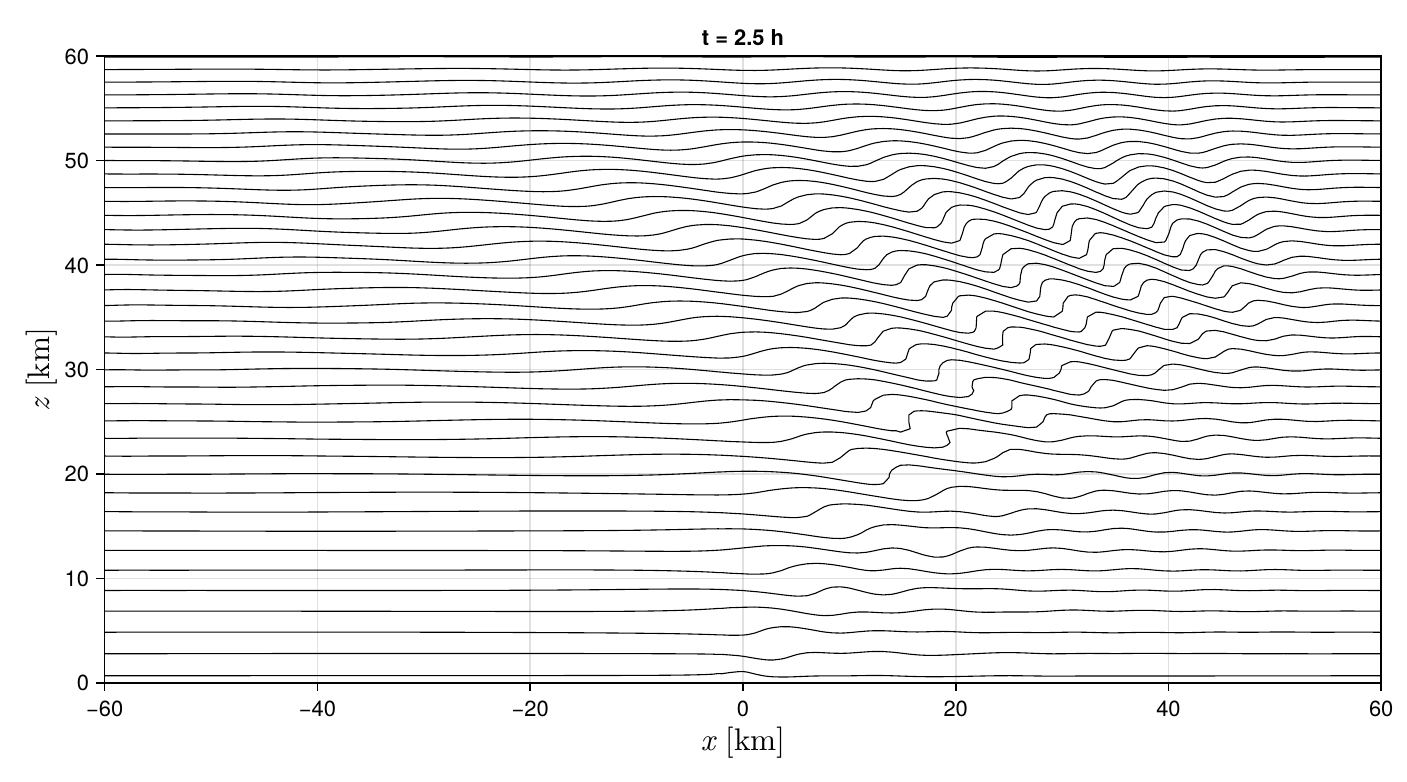}\\
    \includegraphics[width = 0.75\textwidth]{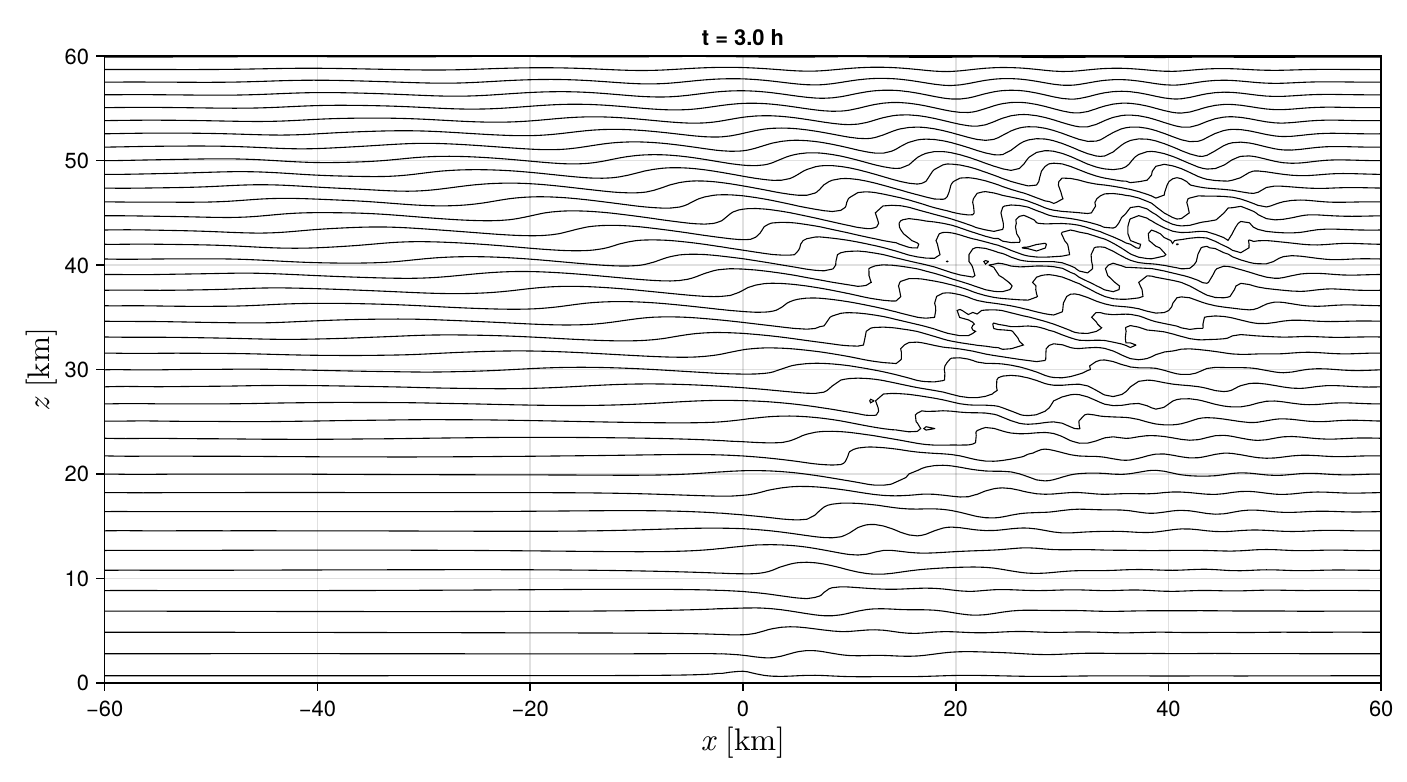}
    \caption{Isentropic lines (potential-temperature contours, with a contour spacing of \SI{5}{K}) for the breaking lee-wave test at $t = \SI{2.5}{h}$ (top) and $t = \SI{3.0}{h}$ (bottom), with $50 \times 30$ elements and $p = 4$, for the vector-invariant formulation.}
    \label{fig:breaking}
\end{figure}

\subsubsection{Critical layer}
\label{sec:results_critical}
The last orographic benchmark is inspired by the idealized study of Spichtinger and Smolarkiewicz~\cite{spichtinger2008critical}, who investigated the impact of the turbulence generated within a critical layer on the formation and evolution of cirrus clouds.
We follow the dynamical setup of their simulations, replacing the ice microphysics by a passive tracer to measure the induced mixing.
A linear wind profile $\overline{u}(z) = u_0 - \Lambda z$, with $u_0 = \SI{10}{m/s}$ and shear $\Lambda = \SI{1e-3}{s^{-1}}$, is prescribed over an atmosphere with constant Brunt--V\"ais\"al\"a frequency $\mathcal{N} = \SI{0.0097}{s^{-1}}$ and surface potential temperature $\theta_s = \SI{288.18}{K}$, so that the background wind vanishes at the critical level $z_c = u_0/\Lambda = \SI{10}{km}$.
The waves are excited by the fixed orography $z_b(x) = a_0\cos(2\pi x/\lambda_x)$, with $a_0 = \SI{20}{m}$ and $\lambda_x = \SI{10}{km}$.
The domain is $[-5, 5] \times [0, 20]~\mathrm{km}^2$, discretized with $80 \times 180$ elements and polynomial degree $p = 3$.
A free-slip condition is used at the bottom and non-reflecting conditions in the upper \SI{5}{km}, with $\alpha = \SI{0.03}{s^{-1}}$.
The passive tracer $\chi$ is prescribed as a Gaussian layer centered at $z_\chi = \SI{8.8}{km}$ with standard deviation $\sigma_\chi = \SI{250}{m}$, and the simulation is run up to $T = \SI{12}{h}$.
For this test, we use the third-order, four-stage SSP Runge--Kutta method~\cite{kraaijevanger1991contractivity} with the embedded method of \cite{conde2022embedded} and the efficient implementation of \cite{Ranocha2022Optimal} available in OrdinaryDiffEq.jl~\cite{rackauckas2017differentialequations}.

Figure~\ref{fig:critical_w} shows the time evolution of the vertical velocity in time intervals of \SI{90}{min}, starting at $t = \SI{4.5}{h}$.
The wave excited by the orography propagates upward and, as it approaches the critical level, its vertical wavelength $\lambda_z = 2\pi\overline{u}(z)/\mathcal{N}$ and its vertical group velocity tend to zero: the wave cannot reach $z_c$ and is absorbed by the mean flow beneath it~\cite{booker1967critical,grubisic1997critical}.
From \SI{7.5}{h} onward, small-scale structures develop beneath the critical level, where the wave breaks and the flow becomes turbulent, as in~\cite{spichtinger2008critical}.
\begin{figure}[tbp]
    \centering
    \includegraphics[width = \textwidth]{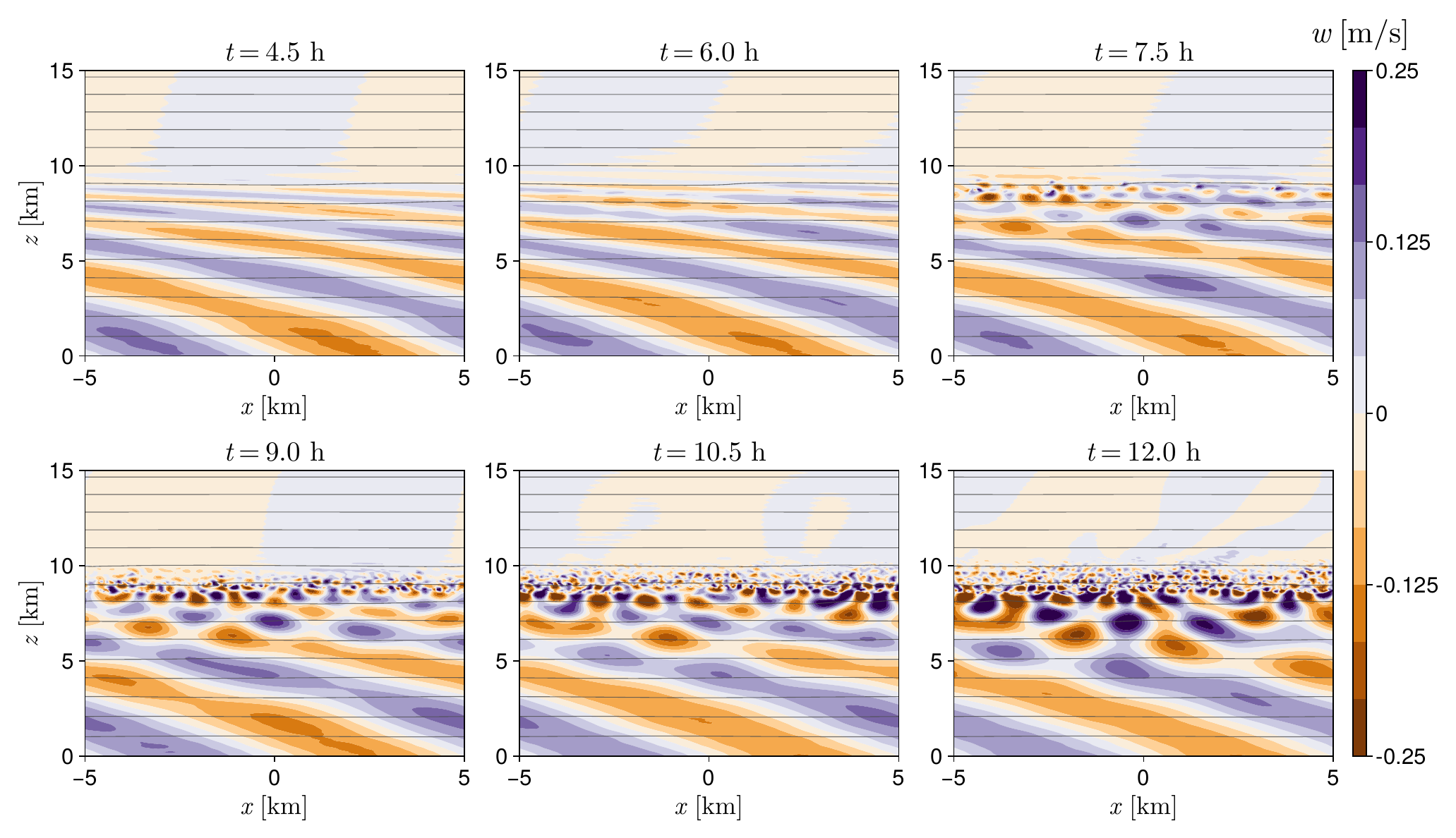}
    \caption{Vertical velocity for the critical-layer test case at $t = 4.5,\, 6,\, 7.5,\, 9,\, 10.5$ and \SI{12}{h}, with $80 \times 180$ elements and $p = 3$, for the vector-invariant formulation. Gray lines indicate the isentropic lines.}
    \label{fig:critical_w}
\end{figure}

Figure~\ref{fig:critical_tracer} shows the evolution of the passive tracer, which follows the wave motion until the breaking and is then stirred by the turbulence.
Figure~\ref{fig:critical_mixing} shows the deviation of the horizontally averaged tracer $\langle\chi\rangle = \int\varrho\chi\,\mathrm{d}x \,/ \int\varrho\,\mathrm{d}x$ from its initial profile: the tracer is transported from the center of the layer toward its edges, so that the layer is broadened by the mixing, which is still developing at $T = \SI{12}{h}$.
\begin{figure}[tbp]
    \centering
    \includegraphics[width = \textwidth]{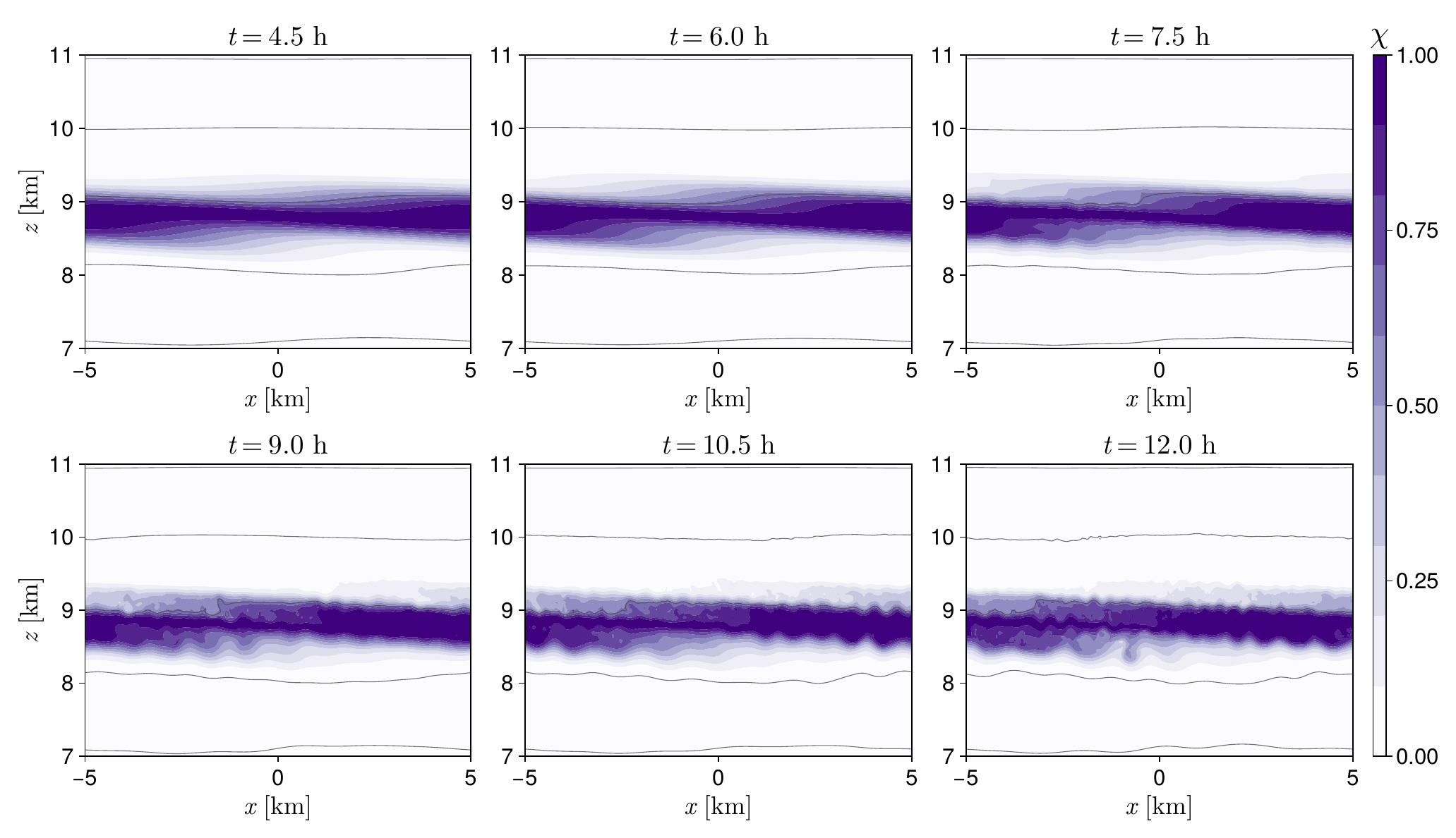}
    \caption{Passive tracer $\chi$ for the critical-layer test case at $t = 4.5,\, 6,\, 7.5,\, 9,\, 10.5$ and \SI{12}{h}, with $80 \times 180$ elements and $p = 3$, for the vector-invariant formulation. Gray lines indicate the isentropic lines.}
    \label{fig:critical_tracer}
\end{figure}

\begin{figure}[tbp]
    \centering
    \includegraphics[width = 0.5\textwidth]{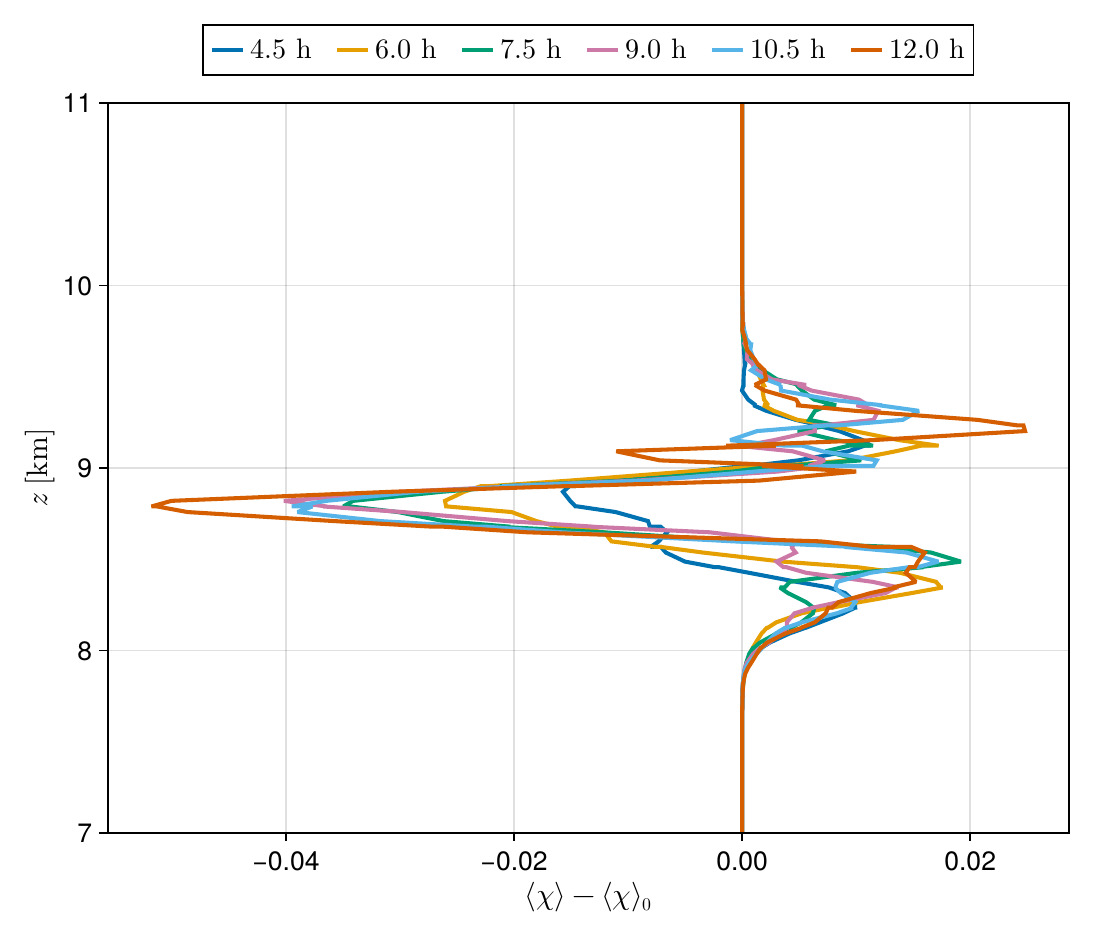}
    \caption{Deviation of the horizontally averaged tracer profile from its initial value for the critical-layer test case at $t = 4.5,\, 6,\, 7.5,\, 9,\, 10.5$ and \SI{12}{h}, with $80 \times 180$ elements and $p = 3$, for the vector-invariant formulation.}
    \label{fig:critical_mixing}
\end{figure}

\subsection{Baroclinic instability}
\label{sec:results_baroclinic}
To further validate the formulation, we employ the baroclinic wave benchmark of Ullrich et al.~\cite{ullrich2014}, adopted in the 2016 Dynamical Core Model Intercomparison Project (DCMIP)~\cite{ullrich2016}.
The initial state is a balanced, axisymmetric solution of the deep-atmosphere equations, perturbed by a localized Gaussian perturbation of the zonal wind in the northern midlatitudes, which triggers the growth of a baroclinic disturbance.
The evolution is predominantly linear up to about day~7 and becomes nonlinear afterward, leading to wave breaking.
The spatial discretization uses an equiangular cubed-sphere grid over six panels in three configurations: a first coarse resolution with $K_h = 8$ elements in the horizontal direction, $K_v = 4$ elements in the vertical direction, and $p = 5$, as in~\cite{artiano2025structurepreservinghighordermethodscompressible}. The remaining two configurations have equal numbers of degrees of freedom, with different polynomial degrees as in~\cite{WARUSZEWSKI2022111507}: $K_h = 30$, $K_v = 8$ elements with $p = 3$, and $K_h = 15$, $K_v = 4$ elements with $p = 7$.
On the right-hand side, a pointwise Coriolis source term $-2\vec{\Omega} \times \vec{V}$ is prescribed.

Time integration uses a horizontally explicit, vertically implicit (HEVI) linear IMEX scheme with the ARK(2,3,2) method of~\cite{giraldo2013numa,GIRALDO2024113275} and $\Delta t = 50,\, 90$ and $180\si{s}$ for $p = 7$, $p = 3$, and $p = 5$, respectively.
Following the approach of Giraldo et al.~\cite{giraldo2013numa, GIRALDO2024113275}, we add and subtract a vertical linear operator.
The Jacobian of the flux-differencing discretization in the vertical direction is computed following the approach of~\cite{CHAN2022110701}.
Figure~\ref{fig:baroclinic_p5} shows contour plots of the surface pressure and of the temperature and relative vorticity at \SI{850}{hPa} on days 8 and 10.
The simulation with the coarse grid gives good results when compared to the more refined grid resolution simulations shown in Figures~\ref{fig:baroclinic_p3} and~\ref{fig:baroclinic_p7}.
The plots are in general in good agreement with the results from the MCore, ENDGame~\cite{ullrich2014}, and MPAS models~\cite{SkamaraockMPAS2021}, as well as the entropy-stable DG scheme from~\cite{WARUSZEWSKI2022111507}, where the vorticity also exhibited grid oscillations due to the minimal numerical dissipation, since the relative vorticity field is more sensitive to numerical dissipation than the pressure and temperature fields~\cite{WARUSZEWSKI2022111507,SkamaraockMPAS2021}.
\begin{figure}[htb]
    \centering
    \includegraphics[width = \textwidth]{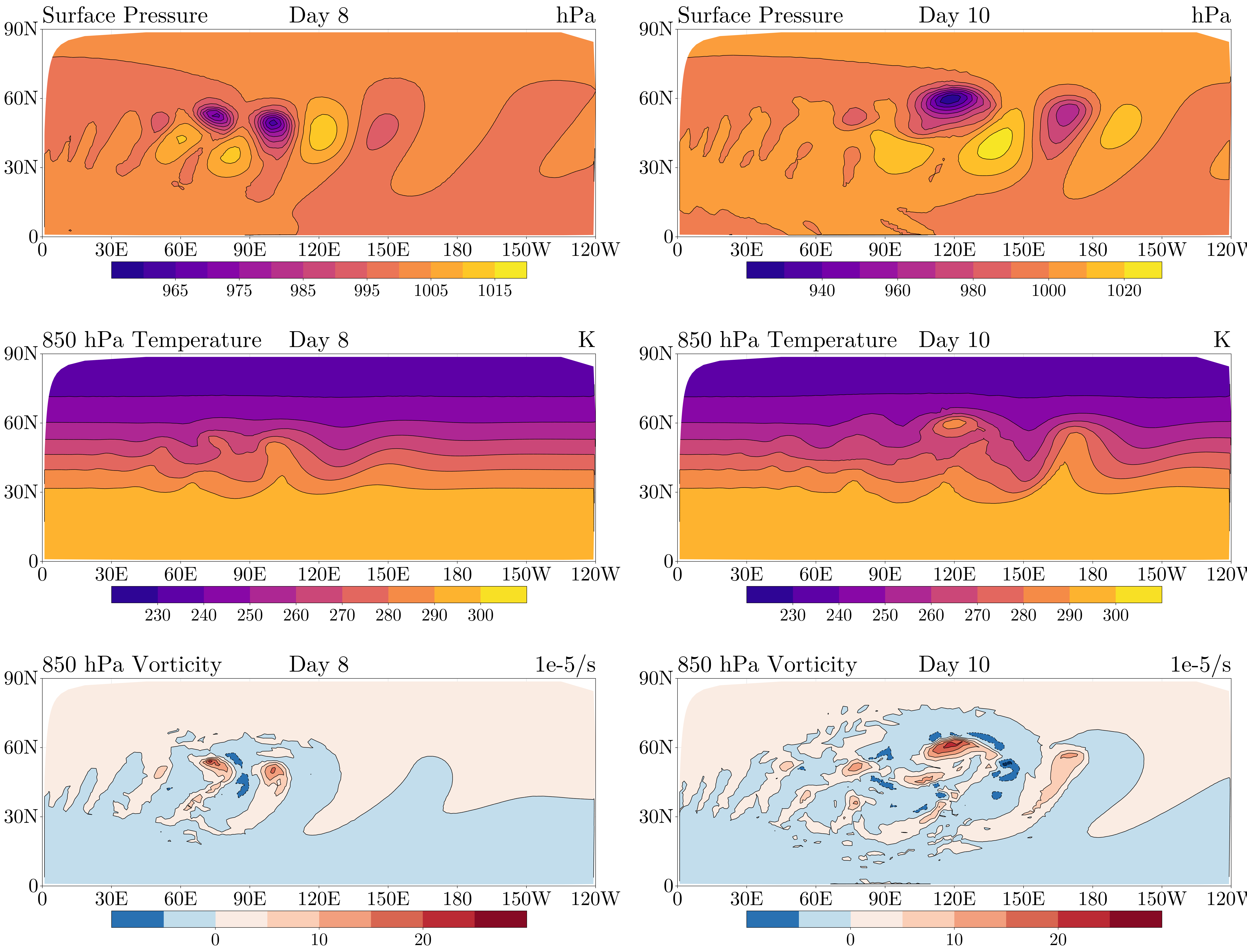}
    \caption{Surface pressure (top), \SI{850}{hPa} temperature (middle) and \SI{850}{hPa} relative vorticity (bottom) at day~8 (left) and day~10 (right) for the baroclinic instability test case, with $K_h = 8$, $K_v = 4$, and polynomial degree $p = 5$.}
    \label{fig:baroclinic_p5}
\end{figure}
\begin{figure}[htb]
    \centering
    \includegraphics[width = \textwidth]{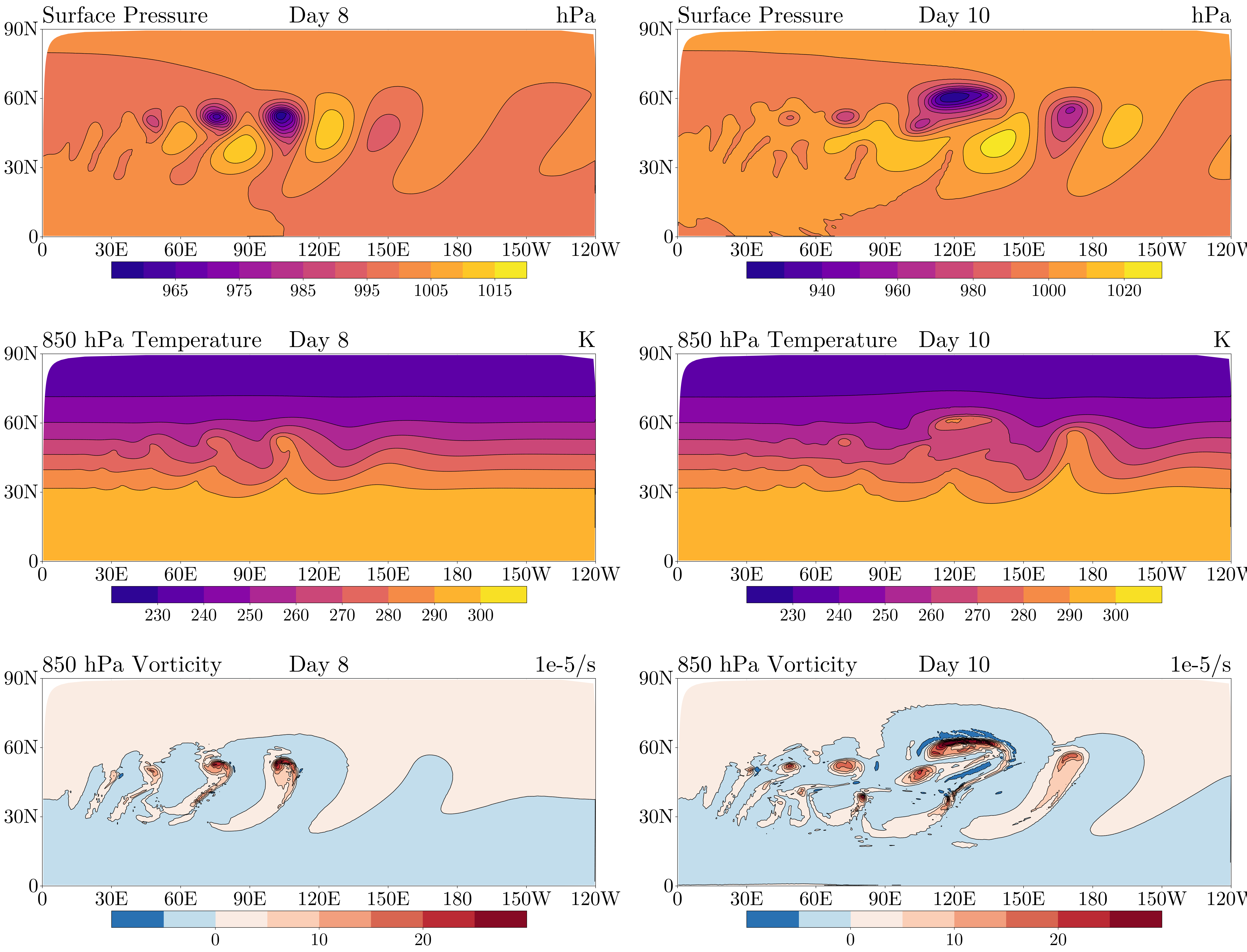}
    \caption{Surface pressure (top), \SI{850}{hPa} temperature (middle) and \SI{850}{hPa} relative vorticity (bottom) at day~8 (left) and day~10 (right) for the baroclinic instability test case, with $K_h = 30$, $K_v = 8$, and polynomial degree $p = 3$.}
    \label{fig:baroclinic_p3}
\end{figure}
\begin{figure}[htb]
    \centering
    \includegraphics[width = \textwidth]{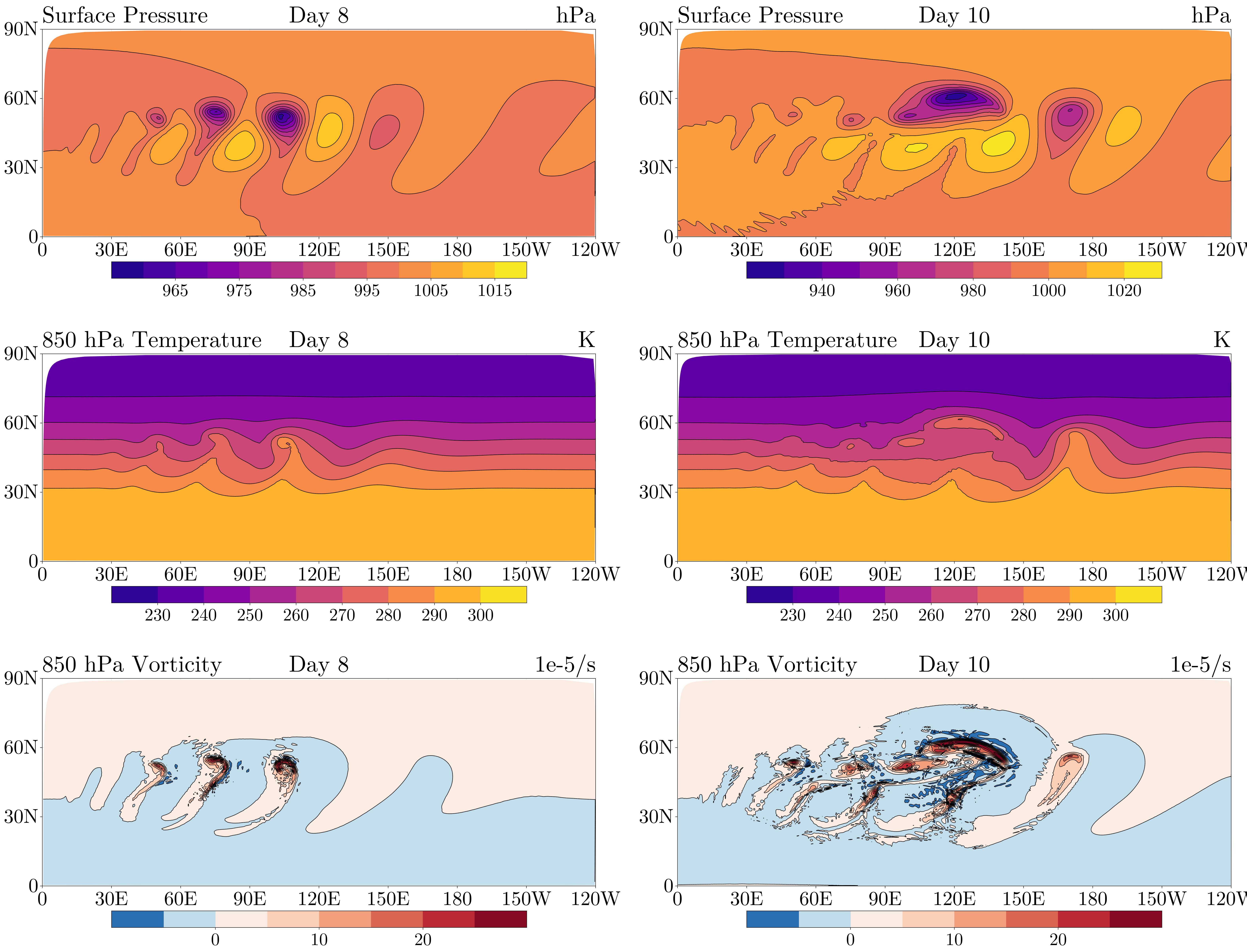}
    \caption{Surface pressure (top), \SI{850}{hPa} temperature (middle) and \SI{850}{hPa} relative vorticity (bottom) at day~8 (left) and day~10 (right) for the baroclinic instability test case, with $K_h = 15$, $K_v = 4$, and polynomial degree $p = 7$.}
    \label{fig:baroclinic_p7}
\end{figure}

Figure~\ref{fig:baroclinic_tseries} shows the evolution of the thermodynamic entropy, the minimum surface pressure and the maximum horizontal wind speed up to day~20, for two discretizations with the same number of degrees of freedom: $K_h = 30$, $K_v = 8$ with $p = 3$ ($\Delta t = \SI{90}{s}$) and $K_h = 15$, $K_v = 4$ with $p = 7$ ($\Delta t = \SI{50}{s}$).
The entropy exhibits minimal dissipation up to day 8 and decreases afterward, when the instability generates underresolved scales, consistent with the entropy stability of the scheme.
At the same number of degrees of freedom, the high-order discretization dissipates less entropy.
The minimum surface pressure and the maximum wind speed agree closely between the two discretizations up to day~15, as well as with the reference solutions in~\cite{WARUSZEWSKI2022111507,SkamaraockMPAS2021,ullrich2014}.
Therefore, the presented new entropy-stable method shows desirable properties of robustness and accuracy also for long-time simulations with very high-order accuracy.
\begin{figure}[htb]
    \centering
    \includegraphics[width = \textwidth]{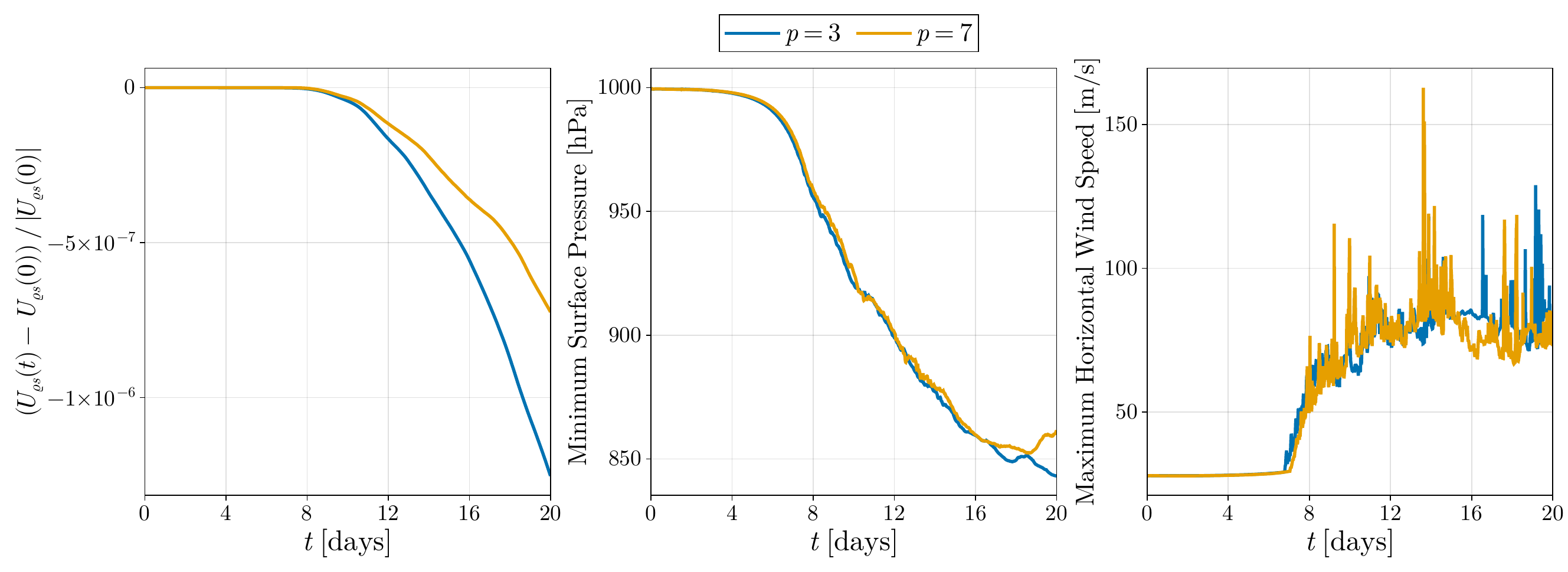}
    \caption{Relative change of the thermodynamic entropy $U_{\varrho s}$ (left), minimum surface pressure (middle) and maximum horizontal wind speed (right) for the baroclinic instability test case up to day~20, with $K_h = 30$, $K_v = 8$, $p = 3$ and $K_h = 15$, $K_v = 4$, $p = 7$.}
    \label{fig:baroclinic_tseries}
\end{figure}

\section{Summary and conclusions}
\label{sec:summary_conclusions}
We presented a structure-preserving flux-differencing discontinuous Galerkin spectral element method (DGSEM) for the compressible Euler equations with gravity in vector-invariant form, using potential temperature as a prognostic variable.
By discretizing the nonconservative vorticity, pressure and geopotential terms as antisymmetric terms, we derived two-point numerical fluxes that are total-energy-conservative (TEC) and entropy-conservative (EC), as well as an entropy-stable (ES) numerical flux that provably preserves both isothermal and isentropic hydrostatic background states.
The ES flux also guarantees the positivity of the density and of the pressure for the first-order finite-volume scheme with strong-stability-preserving Runge--Kutta methods, under a suitable CFL condition.
The main difference with the work of Ricardo et al.~\cite{ricardo2024thermodynamicconsistencystructurepreservationsummation} lies in the choice of the prognostic thermodynamic variable: while they employ the thermodynamic entropy as a linear invariant, we adopt the potential temperature.
As a consequence, their schemes conserve the thermodynamic entropy by construction, and total-energy-conservative/stable fluxes are derived; in our formulation, the potential temperature is conserved by construction, and we derive total-energy-conservative and entropy-conservative/stable fluxes.
Moreover, we proved that the resulting schemes are well-balanced for two families of hydrostatic background states, namely isentropic and isothermal states, on general curvilinear meshes.
Additionally, we have shown that the isentropic background is preserved by any consistent potential-temperature flux, hence by any split form of the pressure gradient written as $c_p \theta \nabla \Pi$. Consequently, the isentropic background is preserved by the scheme of Ricardo et al.~\cite{ricardo2024thermodynamicconsistencystructurepreservationsummation} as recovered in our framework, while the isothermal background requires the inverse logarithmic-mean flux (a kind of generalized nonlinear split-form), recovered via the flux-differencing discretization.
Furthermore, we proved that slip wall boundary conditions are entropy stable for the vector-invariant form.
All these properties carry over, at arbitrary polynomial degree, to high-order SBP methods and, in particular, to the DGSEM considered here on general curvilinear meshes.
The scheme is also efficient, since entropy stability is obtained through a single nonlinear (logarithmic) mean, at a lower cost than total-energy- or potential-temperature-based formulations in conservative form~\cite{WARUSZEWSKI2022111507,artiano2025structurepreservinghighordermethodscompressible}.
To the best of our knowledge, this is the first scheme that is well-balanced for two distinct families of hydrostatic background states simultaneously: the same numerical flux preserves both isothermal and isentropic equilibria, locally and globally, without requiring the background state to be prescribed a priori.
In contrast, existing well-balanced entropy-stable DGSEM schemes preserve a single given background state, selected a priori through the choice of the density mean in the numerical flux~\cite{WARUSZEWSKI2022111507,artiano2025structurepreservinghighordermethodscompressible}.
Numerical experiments, from well-balancedness and inertia-gravity waves to linear hydrostatic/non-hydrostatic mountain flows, as well as breaking waves and critical layer test cases and the baroclinic instability on the cubed sphere, confirmed the theoretical properties and the robustness and accuracy of the method.

\appendix
\section*{Acknowledgments}
MA and HR were supported by the Deutsche Forschungsgemeinschaft
(DFG, German Research Foundation) through project number 528753982
and within the DFG priority program SPP~2410 through project number 526031774.
This work was also supported by the Max Planck Graduate Center with the
Johannes Gutenberg University of Mainz (MPGC) and
the Mainz Institute of Multiscale Modeling (M\textsuperscript{3}ODEL).

\section*{Tool disclosure}
Claude Opus 5 was used to assist with software development and to improve the readability and language.

\section*{Data availability statement}

All Julia source code and data needed to reproduce the numerical results
presented in this paper are available in our reproducibility repository~\cite{artiano2026vectorinvariantRepro}.

%\bibliographystyle{abbrvnat}
%\bibliographystyle{plain}
%\bibliography{refs}
\printbibliography

@misc{artiano2026vectorinvariantRepro,
  title        = {Reproducibility repository for
                  "Entropy-Stable and Well-Balanced Discontinuous Galerkin Methods for the Compressible Euler Equations in Vector-Invariant Form"},
  author       = {Artiano, Marco and Ranocha, Hendrik},
  year         = {2026},
  howpublished = {\url{https://github.com/MarcoArtiano/2026_entropy_stable_vector_invariant}},
  doi          = {10.5281/zenodo.23144288}
}

@misc{artiano2026affordable,
  title         = {On Affordable High-Order Entropy-Conservative/Stable and
                   Well-Balanced Methods for Nonconservative Hyperbolic Systems},
  author        = {Artiano, Marco and Ranocha, Hendrik},
  year          = {2026},
  doi = {10.48550/arXiv.2603.18978},
  archiveprefix = {arXiv},
  primaryclass  = {math.NA},
  eprint = {2603.18978}
}

@misc{waterhouse2026gpuperformanceentropystablediscontinuous,
  title         = {GPU Performance of an Entropy-Stable Discontinuous Galerkin Euler Solver with Non-Conservative Terms},
  author        = {Henry Waterhouse and Maciej Waruszewski and Lucas C. Wilcox and Francis X. Giraldo},
  year          = {2026},
  archiveprefix = {arXiv},
  primaryclass  = {math.NA},
  doi={10.48550/arXiv.2605.16684},
  eprint = {2605.16684}
}

@article{ricardo2024thermodynamicconsistencystructurepreservationsummation,
  author   = {Ricardo, Kieran and Lee, David and Duru, Kenneth},
  title    = {Thermodynamic Consistency and Structure-Preservation in Summation by Parts Methods for the Moist Compressible Euler Equations},
  journal  = {SIAM Journal on Scientific Computing},
  volume   = {48},
  number   = {3},
  pages    = {B517-B543},
  year     = {2026},
  doi      = {10.1137/25M174718X}
}

@article{klose2020assessing,
  title     = {Assessing standard and kinetic energy conserving volume fluxes in
               discontinuous {G}alerkin formulations for marginally resolved
               {N}avier-{S}tokes flows},
  author    = {Klose, Bjoern F and Jacobs, Gustaaf B and Kopriva, David A},
  journal   = {Computers \& Fluids},
  volume    = {205},
  pages     = {104557},
  year      = {2020},
  publisher = {Elsevier},
  doi       = {10.1016/j.compfluid.2020.104557}
}

@article{ullrich2014,
  author   = {Ullrich, Paul A. and Melvin, Thomas and Jablonowski, Christiane and Staniforth, Andrew},
  title    = {A proposed baroclinic wave test case for deep- and shallow-atmosphere dynamical cores},
  journal  = {Quarterly Journal of the Royal Meteorological Society},
  volume   = {140},
  number   = {682},
  pages    = {1590-1602},
  doi      = {10.1002/qj.2241},
  url      = {https://rmets.onlinelibrary.wiley.com/doi/abs/10.1002/qj.2241},
  year     = {2014}
}

@article{Hicken2016_SBP,
  author   = {Hicken, Jason E. and Del Rey Fern\'{a}ndez, David C. and Zingg, David W.},
  title    = {Multidimensional Summation-by-Parts Operators: General Theory and Application to Simplex Elements},
  journal  = {SIAM Journal on Scientific Computing},
  volume   = {38},
  number   = {4},
  pages    = {A1935-A1958},
  year     = {2016},
  doi      = {10.1137/15M1038360},
  url      = {

              10.1137/15M1038360



              }
}

@article{artiano2025structurepreservinghighordermethodscompressible,
title = {Structure-Preserving High-Order Methods for the Compressible Euler Equations in Potential Temperature Formulation for Atmospheric Flows},
journal = {Journal of Computational Physics},
pages = {115221},
year = {2026},
issn = {0021-9991},
doi = {10.1016/j.jcp.2026.115221},
author = {Marco Artiano and Oswald Knoth and Peter Spichtinger and Hendrik Ranocha},
  volume = {566}
}

@article{RENAC20191,
  title    = {Entropy stable {DGSEM} for nonlinear hyperbolic systems in nonconservative form with application to two-phase flows},
  journal  = {Journal of Computational Physics},
  volume   = {382},
  pages    = {1-26},
  year     = {2019},
  issn     = {0021-9991},
  doi      = {10.1016/j.jcp.2018.12.035},
  url      = {https://www.sciencedirect.com/science/article/pii/S002199911930021X},
  author   = {Florent Renac}
}

@article{Castro2013,
  author  = {Castro, Manuel J. and Fjordholm, Ulrik S. and Mishra, Siddhartha and Par{\'e}s, Carlos},
  year    = {2013},
  month   = {05},
  pages   = {1371-1391},
  title   = {Entropy Conservative and Entropy Stable Schemes for Nonconservative Hyperbolic Systems},
  volume  = {51},
  journal = {SIAM Journal on Numerical Analysis},
  doi     = {10.1137/110845379},
  number = {3}
}

@article{Tadmor2003,
  author  = {E. Tadmor},
  title   = {Entropy stability theory for difference approximations of nonlinear conservation laws and related time-dependent problems},
  journal = {Acta Numerica},
  volume  = {12},
  year    = {2003},
  pages   = {451--512},
  doi     = {10.1017/S0962492902000156}
}

@inproceedings{CarpenterBook,
  author    = {Mark H. Carpenter and Matteo Parsani and Eric J. Nielsen and Travis C. Fisher},
  title     = {Towards an Entropy Stable Spectral Element Framework for Computational Fluid Dynamics},
  booktitle = {54th {AIAA} Aerospace Sciences Meeting},
  doi       = {10.2514/6.2016-1058},
  url       = {https://arc.aiaa.org/doi/abs/10.2514/6.2016-1058},
  year = {2016},
  publisher = {American Institute of Aeronautics and Astronautics}
}

@article{winters2018comparative,
  title     = {A comparative study on polynomial dealiasing and split form
               discontinuous {G}alerkin schemes for under-resolved turbulence
               computations},
  author    = {Winters, Andrew R and Moura, Rodrigo C and Mengaldo, Gianmarco and
               Gassner, Gregor J and Walch, Stefanie and Peiro, Joaquim and
               Sherwin, Spencer J},
  journal   = {Journal of Computational Physics},
  volume    = {372},
  pages     = {1--21},
  year      = {2018},
  publisher = {Elsevier},
  doi       = {10.1016/j.jcp.2018.06.016}
}

@article{Carpenter2014,
  author  = {Carpenter, Mark H. and Fisher, Travis C. and Nielsen, Eric J. and Frankel, Steven H.},
  title   = {Entropy Stable Spectral Collocation Schemes for the {N}avier--{S}tokes Equations: Discontinuous Interfaces},
  journal = {SIAM Journal on Scientific Computing},
  volume  = {36},
  number  = {5},
  pages   = {B835-B867},
  year    = {2014},
  doi     = {10.1137/130932193}
}

@article{FISHER2013518,
  title    = {High-order entropy stable finite difference schemes for nonlinear conservation laws: Finite domains},
  journal  = {Journal of Computational Physics},
  volume   = {252},
  pages    = {518-557},
  year     = {2013},
  issn     = {0021-9991},
  doi      = {10.1016/j.jcp.2013.06.014},
  author   = {Travis C. Fisher and Mark H. Carpenter}
}

@article{LIU2025114095,
  title    = {Structure-preserving nodal {DG} method for the {E}uler equations with gravity: well-balanced, entropy stable, and positivity preserving},
  journal  = {Journal of Computational Physics},
  volume   = {537},
  pages    = {114095},
  year     = {2025},
  issn     = {0021-9991},
  doi      = {10.1016/j.jcp.2025.114095},
  author   = {Yuchang Liu and Wei Guo and Yan Jiang and Mengping Zhang}
}

@article{MENGALDO201556,
  title    = {Dealiasing techniques for high-order spectral element methods on regular and irregular grids},
  journal  = {Journal of Computational Physics},
  volume   = {299},
  pages    = {56-81},
  year     = {2015},
  issn     = {0021-9991},
  doi      = {10.1016/j.jcp.2015.06.032},
  url      = {https://www.sciencedirect.com/science/article/pii/S0021999115004301},
  author   = {G. Mengaldo and D. {De Grazia} and D. Moxey and P. E. Vincent and S. J. Sherwin}
}

@article{CiCP-27-5,
  author   = {Yu, Jian and Hesthaven, Jan S.},
  title    = {A Study of Several Artificial Viscosity Models within the Discontinuous {G}alerkin Framework},
  journal  = {Communications in Computational Physics},
  year     = {2020},
  volume   = {27},
  number   = {5},
  pages    = {1309--1343},
  issn     = {1991-7120},
  doi      = {10.4208/cicp.OA-2019-0118},
  url      = {https://global-sci.com/article/79806/a-study-of-several-artificial-viscosity-models-within-the-discontinuous-galerkin-framework}
}

@article{ULLRICH2018427,
  title    = {Impact and importance of hyperdiffusion on the spectral element method: A linear dispersion analysis},
  journal  = {Journal of Computational Physics},
  volume   = {375},
  pages    = {427-446},
  year     = {2018},
  issn     = {0021-9991},
  doi      = {10.1016/j.jcp.2018.06.035},
  url      = {https://www.sciencedirect.com/science/article/pii/S0021999118304133},
  author   = {Paul A. Ullrich and Daniel R. Reynolds and Jorge E. Guerra and Mark A. Taylor}
}

@article{Kopriva2006,
  author   = {David A. Kopriva},
  title    = {Metric Identities and the Discontinuous Spectral Element Method on Curvilinear Meshes},
  journal  = {Journal of Scientific Computing},
  year     = {2006},
  volume   = {26},
  number   = {3},
  pages    = {301--327},
  doi      = {10.1007/s10915-005-9070-8},
  url      = {10.1007/s10915-005-9070-8},
  issn     = {1573-7691}
}

@article{Tadmor1987,
  author  = {E. Tadmor},
  title   = {The numerical viscosity of entropy stable schemes for systems of conservation laws. {I}},
  journal = {Mathematics of Computation},
  volume  = {49},
  number  = {179},
  year    = {1987},
  pages   = {91--103},
  doi     = {10.1090/S0025-5718-1987-0890255-3}
}

@article{DERIGS2017624,
  title    = {A novel averaging technique for discrete entropy-stable dissipation operators for ideal {MHD}},
  journal  = {Journal of Computational Physics},
  volume   = {330},
  pages    = {624-632},
  year     = {2017},
  issn     = {0021-9991},
  doi      = {10.1016/j.jcp.2016.10.055},
  url      = {https://www.sciencedirect.com/science/article/pii/S0021999116305630},
  author   = {Dominik Derigs and Andrew R. Winters and Gregor J. Gassner and Stefanie Walch}
}

@article{ismail2009affordable,
  title     = {Affordable, entropy-consistent {E}uler flux functions {II}:
               {E}ntropy production at shocks},
  author    = {Ismail, Farzad and Roe, Philip L},
  journal   = {Journal of Computational Physics},
  volume    = {228},
  number    = {15},
  pages     = {5410--5436},
  year      = {2009},
  publisher = {Elsevier},
  doi       = {10.1016/j.jcp.2009.04.021}
}

@article{Ranocha2022Optimal,
  author  = {Hendrik Ranocha and Lisandro Dalcin and Matteo Parsani and David I. Ketcheson},
  title   = {Optimized Runge-Kutta Methods with Automatic Step Size Control for Compressible Computational Fluid Dynamics},
  journal = {Communications on Applied Mathematics and Computation},
  year    = {2022},
  volume  = {4},
  number  = {4},
  pages   = {1191--1228},
  doi     = {10.1007/s42967-021-00159-w},
  issn    = {2661-8893}
}

@article{gassner2016split,
  title     = {Split Form Nodal Discontinuous {G}alerkin Schemes with
               Summation-By-Parts Property for the Compressible {E}uler
               Equations},
  author    = {Gassner, Gregor Josef and Winters, Andrew Ross and
               Kopriva, David A},
  journal   = {Journal of Computational Physics},
  volume    = {327},
  pages     = {39--66},
  year      = {2016},
  publisher = {Elsevier},
  doi       = {10.1016/j.jcp.2016.09.013}
}

@article{Souza,
  author   = {Souza, A. N. and He, J. and Bischoff, T. and Waruszewski, M. and Novak, L. and Barra, V. and Gibson, T. and Sridhar, A. and Kandala, S. and Byrne, S. and Wilcox, L. C. and Kozdon, J. and Giraldo, F. X. and Knoth, O. and Marshall, J. and Ferrari, R. and Schneider, T.},
  title    = {The Flux-Differencing Discontinuous {G}alerkin Method Applied to an Idealized Fully Compressible Nonhydrostatic Dry Atmosphere},
  journal  = {Journal of Advances in Modeling Earth Systems},
  volume   = {15},
  number   = {4},
  pages    = {e2022MS003527},
  doi      = {10.1029/2022MS003527},
  url      = {https://agupubs.onlinelibrary.wiley.com/doi/abs/10.1029/2022MS003527},
  year     = {2023}
}

@article{SkamaraockMPAS2021,
  author    = {William C. Skamarock and Hing Ong and Joseph B. Klemp},
  title     = {A Fully Compressible Nonhydrostatic Deep-Atmosphere Equations Solver for MPAS},
  journal   = {Monthly Weather Review},
  year      = {2021},
  publisher = {American Meteorological Society},
  address   = {Boston MA, USA},
  volume    = {149},
  number    = {2},
  doi       = {10.1175/MWR-D-20-0286.1},
  pages     = {571--583},
  url       = {https://journals.ametsoc.org/view/journals/mwre/149/2/MWR-D-20-0286.1.xml}
}

@article{WARUSZEWSKI2022111507,
  title    = {Entropy stable discontinuous {G}alerkin methods for balance laws in non-conservative form: Applications to the {E}uler equations with gravity},
  journal  = {Journal of Computational Physics},
  volume   = {468},
  pages    = {111507},
  year     = {2022},
  issn     = {0021-9991},
  doi      = {10.1016/j.jcp.2022.111507},
  url      = {https://www.sciencedirect.com/science/article/pii/S0021999122005691},
  author   = {Maciej Waruszewski and Jeremy E. Kozdon and Lucas C. Wilcox and Thomas H. Gibson and Francis X. Giraldo}
}

@article{taylor2019,
  author   = {Taylor, Mark A. and Guba, Oksana and Steyer, Andrew and Ullrich, Paul A. and Hall, David M. and Eldred, Christopher},
  title    = {An Energy Consistent Discretization of the Nonhydrostatic Equations in Primitive Variables},
  journal  = {Journal of Advances in Modeling Earth Systems},
  volume   = {12},
  number   = {1},
  pages    = {e2019MS001783},
  doi      = {10.1029/2019MS001783},
  year     = {2020}
}

@article{baldauf2013,
  author   = {Baldauf, Michael and Brdar, Slavko},
  title    = {An analytic solution for linear gravity waves in a channel as a test for numerical models using the non-hydrostatic, compressible {E}uler equations},
  journal  = {Quarterly Journal of the Royal Meteorological Society},
  volume   = {139},
  number   = {677},
  pages    = {1977-1989},
  doi      = {10.1002/qj.2105},
  url      = {https://rmets.onlinelibrary.wiley.com/doi/abs/10.1002/qj.2105},
  year     = {2013}
}

@article{blaise2016,
  author   = {Blaise, Sébastien and Lambrechts, Jonathan and Deleersnijder, Eric},
  title    = {A stabilization for three-dimensional discontinuous {G}alerkin discretizations applied to nonhydrostatic atmospheric simulations},
  journal  = {International Journal for Numerical Methods in Fluids},
  volume   = {81},
  number   = {9},
  pages    = {558-585},
  doi      = {10.1002/fld.4197},
  url      = {https://onlinelibrary.wiley.com/doi/abs/10.1002/fld.4197},
  year     = {2016}
}

@article{BALDAUF2021110635,
  title    = {A horizontally explicit, vertically implicit ({HEVI}) discontinuous {G}alerkin scheme for the 2-dimensional {E}uler and {N}avier-{S}tokes equations using terrain-following coordinates},
  journal  = {Journal of Computational Physics},
  volume   = {446},
  pages    = {110635},
  year     = {2021},
  issn     = {0021-9991},
  doi      = {10.1016/j.jcp.2021.110635},
  url      = {https://www.sciencedirect.com/science/article/pii/S0021999121005301},
  author   = {Michael Baldauf}
}

@article{GIRALDO20083849,
  title    = {A study of spectral element and discontinuous {G}alerkin methods for the Navier–Stokes equations in nonhydrostatic mesoscale atmospheric modeling: Equation sets and test cases},
  journal  = {Journal of Computational Physics},
  volume   = {227},
  number   = {8},
  pages    = {3849-3877},
  year     = {2008},
  issn     = {0021-9991},
  doi      = {10.1016/j.jcp.2007.12.009},
  url      = {https://www.sciencedirect.com/science/article/pii/S0021999107005384},
  author   = {F. X. Giraldo and M. Restelli}
}

@article{ORLANDO2023115124,
  title    = {An {IMEX}-{DG} solver for atmospheric dynamics simulations with adaptive mesh refinement},
  journal  = {Journal of Computational and Applied Mathematics},
  volume   = {427},
  pages    = {115124},
  year     = {2023},
  issn     = {0377-0427},
  doi      = {10.1016/j.cam.2023.115124},
  url      = {https://www.sciencedirect.com/science/article/pii/S0377042723000687},
  author   = {Giuseppe Orlando and Tommaso Benacchio and Luca Bonaventura}
}

@article{ChandrashekarWB,
  author   = {Chandrashekar, Praveen and Klingenberg, Christian},
  title    = {A Second Order Well-Balanced Finite Volume Scheme for {E}uler Equations with Gravity},
  journal  = {SIAM Journal on Scientific Computing},
  volume   = {37},
  number   = {3},
  pages    = {B382-B402},
  year     = {2015},
  doi      = {10.1137/140984373}
}

@article{GIRFOGLIO2025106510,
  title    = {A comparative computational study of different formulations of the compressible {E}uler equations for mesoscale atmospheric flows in a finite volume framework},
  journal  = {Computers \& Fluids},
  volume   = {288},
  pages    = {106510},
  year     = {2025},
  issn     = {0045-7930},
  doi      = {10.1016/j.compfluid.2024.106510},
  author   = {M. Girfoglio and A. Quaini and G. Rozza}
}

@article{SimpleTestsofaSemiImplicitSemiLagrangianModelon2DMountainWaveProblems,
  author    = {Jean-Pierre  Pinty and Robert  Benoit and Evelyne  Richard and René  Laprise},
  title     = {Simple Tests of a Semi-Implicit Semi-{L}agrangian Model on 2D Mountain Wave Problems},
  journal   = {Monthly Weather Review},
  year      = {1995},
  publisher = {American Meteorological Society},
  address   = {Boston MA, USA},
  volume    = {123},
  number    = {10},
  doi       = {10.1175/1520-0493(1995)123<3042:STOASI>2.0.CO;2},
  pages     = {3042--3058},
  url       = {https://journals.ametsoc.org/view/journals/mwre/123/10/1520-0493_1995_123_3042_stoasi_2_0_co_2.xml}
}

@article{Smith1979TheIO,
  title   = {The Influence of Mountains on the Atmosphere},
  author  = {Ronald B. Smith},
  journal = {Advances in Geophysics},
  year    = {1979},
  volume  = {21},
  pages   = {87-230},
  doi = {10.1016/S0065-2687(08)60262-9}
}

@article{AnUpperBoundaryConditionPermittingInternalGravityWaveRadiationinNumericalMesoscaleModels,
  author    = {Joseph B.  Klemp and Dale R.  Durran},
  title     = {An Upper Boundary Condition Permitting Internal Gravity Wave Radiation in Numerical Mesoscale Models},
  journal   = {Monthly Weather Review},
  year      = {1983},
  publisher = {American Meteorological Society},
  address   = {Boston MA, USA},
  volume    = {111},
  number    = {3},
  doi       = {10.1175/1520-0493(1983)111<0430:AUBCPI>2.0.CO;2},
  pages     = {430--444},
  url       = {https://journals.ametsoc.org/view/journals/mwre/111/3/1520-0493_1983_111_0430_aubcpi_2_0_co_2.xml}
}

@article{gassner2013skew,
  title     = {A Skew-Symmetric Discontinuous {G}alerkin Spectral Element
               Discretization and Its Relation to {SBP}-{SAT} Finite Difference
               Methods},
  author    = {Gassner, Gregor Josef},
  journal   = {SIAM Journal on Scientific Computing},
  volume    = {35},
  number    = {3},
  pages     = {A1233--A1253},
  year      = {2013},
  publisher = {Society for Industrial and Applied Mathematics},
  doi       = {10.1137/120890144}
}

@article{fjordholm2012arbitrarily,
  title={Arbitrarily High-Order Accurate Entropy Stable Essentially
         Nonoscillatory Schemes for Systems of Conservation Laws},
  author={Fjordholm, Ulrik Skre and Mishra, Siddhartha and
          Tadmor, Eitan},
  journal={SIAM Journal on Numerical Analysis},
  volume={50},
  number={2},
  pages={544--573},
  year={2012},
  publisher={Society for Industrial and Applied Mathematics},
  doi={10.1137/110836961}
}

@article{VisbalGaitonde1999,
  author  = {Visbal, Miguel R. and Gaitonde, Datta V.},
  title   = {High-Order-Accurate Methods for Complex Unsteady Subsonic Flows},
  journal = {AIAA Journal},
  volume  = {37},
  number  = {10},
  pages   = {1231-1239},
  year    = {1999},
  doi     = {10.2514/2.591}
}

@inbook{Kopriva2009,
  author    = {Kopriva, David A.},
  title     = {Spectral Element Methods},
  booktitle = {Implementing Spectral Methods for Partial Differential Equations: Algorithms for Scientists and Engineers},
  year      = {2009},
  publisher = {Springer Netherlands},
  address   = {Dordrecht},
  pages     = {293--354},
  isbn      = {978-90-481-2261-5},
  doi       = {10.1007/978-90-481-2261-5_8}
}

@article{WINTERS2017274,
  title    = {A uniquely defined entropy stable matrix dissipation operator for high Mach number ideal {MHD} and compressible {E}uler simulations},
  journal  = {Journal of Computational Physics},
  volume   = {332},
  pages    = {274-289},
  year     = {2017},
  issn     = {0021-9991},
  doi      = {10.1016/j.jcp.2016.12.006},
  author   = {Andrew R. Winters and Dominik Derigs and Gregor J. Gassner and Stefanie Walch}
}

@misc{ullrich2016,
  author       = {Ullrich, Paul A. and Jablonowski, Christiane and Reed, Kevin A. and Zarzycki, Colin and Lauritzen, Peter H. and Nair, Ramachandran D. and Kent, James and Verlet-Banide, Antonin},
  title        = {Dynamical Core Model Intercomparison Project ({DCMIP2016}) Test Case Document},
  year         = {2016},
  howpublished = {Technical report. \url{https://github.com/ClimateGlobalChange/DCMIP2016}},
  url = {https://github.com/ClimateGlobalChange/DCMIP2016/blob/master/DCMIP2016-TestCaseDocument_v1.pdf},
  note = {DCMIP Summer School, June 2016}
}

@article{Winters2020,
  author   = {Andrew R. Winters and Christof Czernik and Moritz B. Schily and Gregor J. Gassner},
  title    = {Entropy stable numerical approximations for the isothermal and polytropic {E}uler equations},
  journal  = {BIT Numerical Mathematics},
  year     = {2020},
  volume   = {60},
  number   = {3},
  pages    = {791--824},
  doi      = {10.1007/s10543-019-00789-w},
  url      = {10.1007/s10543-019-00789-w}
}

@article{AControlVolumeModeloftheCompressibleEulerEquationswithaVerticalLagrangianCoordinate,
  author    = {Chen, Xi and Andronova, Natalia and {Van Leer}, Bram and Penner, Joyce E. and Boyd, John P. and Jablonowski, Christiane and Lin, Shian-Jiann},
  title     = {A Control-Volume Model of the Compressible {E}uler Equations with a Vertical {L}agrangian Coordinate},
  journal   = {Monthly Weather Review},
  year      = {2013},
  publisher = {American Meteorological Society},
  address   = {Boston MA, USA},
  volume    = {141},
  number    = {7},
  doi       = {10.1175/MWR-D-12-00129.1},
  pages     = {2526--2544},
  url       = {https://journals.ametsoc.org/view/journals/mwre/141/7/mwr-d-12-00129.1.xml}
}

@article{chandrashekar2013,
  author  = {Praveen Chandrashekar},
  title   = {Kinetic Energy Preserving and Entropy Stable Finite Volume Schemes for Compressible Euler and Navier-Stokes Equations},
  journal = {Communications in Computational Physics},
  year    = {2013},
  volume  = {14},
  number  = {5},
  pages   = {1252-1286},
  doi     = {10.4208/cicp.170712.010313a},
  url     = {10.4208/cicp.170712.010313a}
}

@article{CHAN2022110701,
title = {Efficient computation of Jacobian matrices for entropy stable summation-by-parts schemes},
journal = {Journal of Computational Physics},
volume = {448},
pages = {110701},
year = {2022},
issn = {0021-9991},
doi = {10.1016/j.jcp.2021.110701},
url = {https://www.sciencedirect.com/science/article/pii/S0021999121005969},
author = {Jesse Chan and Christina G. Taylor}
}

@article{GIRALDO2024113275,
  title    = {A performance study of horizontally explicit vertically implicit (HEVI) time-integrators for non-hydrostatic atmospheric models},
  journal  = {Journal of Computational Physics},
  volume   = {515},
  pages    = {113275},
  year     = {2024},
  issn     = {0021-9991},
  doi      = {10.1016/j.jcp.2024.113275},
  url      = {https://www.sciencedirect.com/science/article/pii/S0021999124005230},
  author   = {Francis X. Giraldo and Felipe A. V. {de Bragança Alves} and James F. Kelly and Soonpil Kang and P. Alex Reinecke}
}

@article{giraldo2013numa,
  author   = {Giraldo, F. X. and Kelly, J. F. and Constantinescu, E. M.},
  title    = {Implicit-Explicit Formulations of a Three-Dimensional Nonhydrostatic Unified Model of the Atmosphere (NUMA)},
  journal  = {SIAM Journal on Scientific Computing},
  volume   = {35},
  number   = {5},
  pages    = {B1162-B1194},
  year     = {2013},
  doi      = {10.1137/120876034}
}

@article{NUMA,
  author   = {Andreas Müller and Michal A Kopera and Simone Marras and Lucas C Wilcox and Tobin Isaac and Francis X Giraldo},
  title    = {Strong scaling for numerical weather prediction at petascale with the atmospheric model {NUMA}},
  journal  = {The International Journal of High Performance Computing Applications},
  volume   = {33},
  number   = {2},
  pages    = {411-426},
  year     = {2019},
  doi      = {10.1177/1094342018763966}
}

@article{ACompressibleModelfortheSimulationofMoistMountainWaves,
  author    = {Dale R.  Durran and Joseph B.  Klemp},
  title     = {A Compressible Model for the Simulation of Moist Mountain Waves},
  journal   = {Monthly Weather Review},
  year      = {1983},
  publisher = {American Meteorological Society},
  address   = {Boston MA, USA},
  volume    = {111},
  number    = {12},
  doi       = {10.1175/1520-0493(1983)111<2341:ACMFTS>2.0.CO;2},
  pages     = {2341--2361},
  url       = {https://journals.ametsoc.org/view/journals/mwre/111/12/1520-0493_1983_111_2341_acmfts_2_0_co_2.xml}
}

@article{MeansGenerated,
  issn      = {0025-570X},
  url       = {http://www.jstor.org/stable/30044201},
  author    = {Hongwei Chen},
  journal   = {Mathematics Magazine},
  number    = {5},
  pages     = {397--399},
  publisher = {Mathematical Association of America},
  title     = {Means Generated by an Integral},
  urldate   = {2025-08-18},
  volume    = {78},
  year      = {2005},
  doi       = {10.2307/30044201}
}

@book{Gottliebs2011,
  title     = {Strong stability preserving {R}unge-{K}utta and multistep time
               discretizations},
  author    = {Gottlieb, Sigal and Ketcheson, David I and Shu, Chi-Wang},
  year      = {2011},
  publisher = {World Scientific},
  address   = {Singapore},
  doi = {10.1142/7498}
}

@inbook{Barth1999,
  author    = {Barth, Timothy J.},
  editor    = {Kr{\"o}ner, Dietmar
               and Ohlberger, Mario
               and Rohde, Christian},
  title     = {Numerical Methods for Gasdynamic Systems on Unstructured Meshes},
  booktitle = {An Introduction to Recent Developments in Theory and Numerics for Conservation Laws: Proceedings of the International School on Theory and Numerics for Conservation Laws, Freiburg/Littenweiler, October 20--24, 1997},
  year      = {1999},
  publisher = {Springer Berlin Heidelberg},
  address   = {Berlin, Heidelberg},
  pages     = {195--285},
  isbn      = {978-3-642-58535-7},
  doi       = {10.1007/978-3-642-58535-7_5}
}

@article{sjogreen2018high,
  title     = {High order entropy conservative central schemes for wide ranges of
               compressible gas dynamics and {MHD} flows},
  author    = {Sj{\"o}green, Bj{\"o}rn and Yee, H. C.},
  journal   = {Journal of Computational Physics},
  volume    = {364},
  pages     = {153--185},
  year      = {2018},
  publisher = {Elsevier},
  doi       = {10.1016/j.jcp.2018.02.003}
}

@article{ranocha2023efficient,
  title   = {Efficient implementation of modern entropy stable and kinetic energy
             preserving discontinuous {G}alerkin methods for conservation laws},
  author  = {Ranocha, Hendrik and Schlottke-Lakemper, Michael and Chan, Jesse and Rueda-Ram{\'i}rez, Andr{\'e}s M. and Winters, Andrew R. and Hindenlang, Florian and Gassner, Gregor J.},
  journal = {ACM Transactions on Mathematical Software},
  year    = {2023},
  month   = {12},
  volume  = {49},
  doi     = {10.1145/3625559},
  number = {4},
  pages = {1--30}
}

@article{choi2016,
  author        = {Choi, Suk-Jin and Hong, Song-You},
  doi           = {10.1007/s13143-016-0005-0},
  id            = {Choi2016},
  journal       = {Asia-Pacific Journal of Atmospheric Sciences},
  number        = {3},
  pages         = {291--307},
  title         = {A global non-hydrostatic dynamical core using the spectral element method on a cubed-sphere grid},
  volume        = {52},
  year          = {2016},
  issn = {1976-7951}
}

@article{baldaufbridge2026,
  author  = {Baldauf, Michael and Prill, Florian},
  title   = {Comparison of two {E}uler equation sets in a Discontinuous {G}alerkin solver for atmospheric modelling ({BRIDGE} v0.9)},
  journal = {Geoscientific Model Development},
  volume  = {19},
  year    = {2026},
  pages   = {7013--7040},
  doi     = {10.5194/gmd-19-7013-2026},
  number = {14}
}

@article{kawai2025,
  author  = {Kawai, Y. and Tomita, H.},
  title   = {Development of a high-order global dynamical core using the discontinuous Galerkin method for an atmospheric large-eddy simulation (LES)
             and proposal of test cases: SCALE-DG v0.8.0},
  journal = {Geoscientific Model Development},
  volume  = {18},
  year    = {2025},
  number  = {3},
  pages   = {725--762},
  url     = {https://gmd.copernicus.org/articles/18/725/2025/},
  doi     = {10.5194/gmd-18-725-2025}
}

@article{yatunin_clima2016,
  author   = {Yatunin, Dennis and Byrne, Simon and Kawczynski, Charles and Kandala, Sriharsha and Bozzola, Gabriele and Sridhar, Akshay and Shen, Zhaoyi and Jaruga, Anna and Sloan, Julia and He, Jia and Huang, Daniel Zhengyu and Barra, Valeria and Chew, Ray and Boral, Anudhyan and Chen, Yi-fan and Knoth, Oswald and Ullrich, Paul and Mbengue, Cheikh and Schneider, Tapio},
  title    = {The Climate Modeling Alliance Atmosphere Dynamical Core: Concepts, Numerics, and Scaling},
  journal  = {Journal of Advances in Modeling Earth Systems},
  volume   = {18},
  number   = {3},
  pages    = {e2025MS005014},
  doi      = {10.1029/2025MS005014},
  year     = {2026}
}

@article{CAM-SE2018,
  author   = {Lauritzen, P. H. and Nair, R. D. and Herrington, A. R. and Callaghan, P. and Goldhaber, S. and Dennis, J. M. and Bacmeister, J. T. and Eaton, B. E. and Zarzycki, C. M. and Taylor, Mark A. and Ullrich, P. A. and Dubos, T. and Gettelman, A. and Neale, R. B. and Dobbins, B. and Reed, K. A. and Hannay, C. and Medeiros, B. and Benedict, J. J. and Tribbia, J. J.},
  title    = {NCAR Release of CAM-SE in CESM2.0: A Reformulation of the Spectral Element Dynamical Core in Dry-Mass Vertical Coordinates With Comprehensive Treatment of Condensates and Energy},
  journal  = {Journal of Advances in Modeling Earth Systems},
  volume   = {10},
  number   = {7},
  pages    = {1537-1570},
  doi      = {10.1029/2017MS001257},
  url      = {https://agupubs.onlinelibrary.wiley.com/doi/abs/10.1029/2017MS001257},
  year     = {2018}
}

@article{ranocha2018generalised,
  title       = {Generalised Summation-by-Parts Operators and Variable Coefficients},
  author      = {Ranocha, Hendrik},
  journal     = {Journal of Computational Physics},
  volume      = {362},
  pages       = {20--48},
  year        = {2018},
  month       = {06},
  publisher   = {Elsevier},
  doi         = {10.1016/j.jcp.2018.02.021},
  eprinttype  = {arxiv},
  eprintclass = {math.NA}
}

@article{bezanson2017julia,
  title     = {Julia: {A} Fresh Approach to Numerical Computing},
  author    = {Bezanson, Jeff and Edelman, Alan and Karpinski, Stefan and
               Shah, Viral B},
  journal   = {SIAM Review},
  volume    = {59},
  number    = {1},
  pages     = {65--98},
  year      = {2017},
  publisher = {SIAM},
  doi       = {10.1137/141000671}
}

@article{ranocha2022adaptive,
  title     = {Adaptive numerical simulations with {T}rixi.jl:
               {A} case study of {J}ulia for scientific computing},
  author    = {Ranocha, Hendrik and Schlottke-Lakemper, Michael and
               Winters, Andrew Ross and Faulhaber, Erik and Chan, Jesse and
               Gassner, Gregor J},
  journal   = {Proceedings of the JuliaCon Conferences},
  volume    = {1},
  number    = {1},
  pages     = {77},
  year      = {2022},
  month     = {01},
  publisher = {The Open Journal},
  doi       = {10.21105/jcon.00077}
}

@article{schlottkelakemper2021purely,
  title     = {A purely hyperbolic discontinuous {G}alerkin approach for
               self-gravitating gas dynamics},
  author    = {Schlottke-Lakemper, Michael and Winters, Andrew R and
               Ranocha, Hendrik and Gassner, Gregor J},
  journal   = {Journal of Computational Physics},
  pages     = {110467},
  year      = {2021},
  month     = {10},
  volume    = {442},
  publisher = {Elsevier},
  doi       = {10.1016/j.jcp.2021.110467}
}

@article{kraaijevanger1991contractivity,
  title     = {Contractivity of {R}unge-{K}utta methods},
  author    = {Kraaijevanger, Johannes Franciscus Bernardus Maria},
  journal   = {BIT Numerical Mathematics},
  volume    = {31},
  number    = {3},
  pages     = {482--528},
  year      = {1991},
  publisher = {Springer},
  doi       = {10.1007/BF01933264}
}

@article{rackauckas2017differentialequations,
  title     = {{DifferentialEquations.jl} {--} {A} Performant and Feature-Rich
               Ecosystem for Solving Differential Equations in {J}ulia},
  author    = {Rackauckas, Christopher and Nie, Qing},
  journal   = {Journal of Open Research Software},
  volume    = {5},
  number    = {1},
  pages     = {15},
  year      = {2017},
  publisher = {Ubiquity Press},
  doi       = {10.5334/jors.151}
}

@article{danisch2021makie,
  title   = {Makie.jl: Flexible high-performance data visualization for {J}ulia},
  author  = {Danisch, Simon and Krumbiegel, Julius},
  journal = {Journal of Open Source Software},
  volume  = {6},
  number  = {65},
  pages   = {3349},
  year    = {2021},
  doi     = {10.21105/joss.03349}
}

@article{rojas2021robustness,
  title       = {On the robustness and performance of entropy stable collocated discontinuous {G}alerkin methods},
  author      = {Rojas, Diego and Boukharfane, Radouan and Dalcin, Lisandro and Del Rey Fern{\'a}ndez, David C. and Ranocha, Hendrik and Keyes, David E. and Parsani, Matteo},
  journal     = {Journal of Computational Physics},
  volume      = {426},
  pages       = {109891},
  year        = {2021},
  month       = {2},
  publisher   = {Elsevier},
  doi         = {10.1016/j.jcp.2020.109891}
}

@article{chan2018discretely,
  title     = {On discretely entropy conservative and entropy stable
               discontinuous {G}alerkin methods},
  author    = {Chan, Jesse},
  journal   = {Journal of Computational Physics},
  volume    = {362},
  pages     = {346--374},
  year      = {2018},
  publisher = {Elsevier},
  doi       = {10.1016/j.jcp.2018.02.033}
}

@article{chan2019skew,
  title     = {Skew-symmetric entropy stable modal discontinuous {G}alerkin
               formulations},
  author    = {Chan, Jesse},
  journal   = {Journal of Scientific Computing},
  volume    = {81},
  number    = {1},
  pages     = {459--485},
  year      = {2019},
  publisher = {Springer},
  doi       = {10.1007/s10915-019-01026-w}
}

@article{svard2021boundary,
  title     = {Entropy stable boundary conditions for the {E}uler equations},
  author    = {Sv{\"a}rd, Magnus},
  journal   = {Journal of Computational Physics},
  volume    = {426},
  pages     = {109947},
  year      = {2021},
  publisher = {Elsevier},
  doi       = {10.1016/j.jcp.2020.109947}
}

@inbook{hindenlang2020wall,
  author    = {Hindenlang, Florian J. and Gassner, Gregor J. and Kopriva, David A.},
  title     = {Stability of Wall Boundary Condition Procedures for Discontinuous
               {G}alerkin Spectral Element Approximations of the Compressible
               {E}uler Equations},
  booktitle = {Spectral and High Order Methods for Partial Differential Equations
               ICOSAHOM 2018},
  editor    = {Sherwin, Spencer J. and Moxey, David and Peir{\'o}, Joaquim and
               Vincent, Peter E. and Schwab, Christoph},
  series    = {Lecture Notes in Computational Science and Engineering},
  volume    = {134},
  pages     = {3--19},
  publisher = {Springer},
  year      = {2020},
  doi       = {10.1007/978-3-030-39647-3_1}
}

@article{klein2009asymptotics,
  author  = {Klein, Rupert},
  title   = {Asymptotics, structure, and integration of sound-proof atmospheric flow equations},
  journal = {Theoretical and Computational Fluid Dynamics},
  volume  = {23},
  number  = {3},
  pages   = {161--195},
  year    = {2009},
  doi     = {10.1007/s00162-009-0104-y}
}

@article{booker1967critical,
  author  = {Booker, John R. and Bretherton, Francis P.},
  title   = {The critical layer for internal gravity waves in a shear flow},
  journal = {Journal of Fluid Mechanics},
  volume  = {27},
  number  = {3},
  pages   = {513--539},
  year    = {1967},
  doi     = {10.1017/S0022112067000515}
}

@article{ChandrashekarWBDG,
	author = {Chandrashekar, Praveen and Zenk, Markus},
	doi = {10.1007/s10915-016-0339-x},
	id = {Chandrashekar2017},
	journal = {Journal of Scientific Computing},
	number = {3},
	pages = {1062--1093},
	title = {Well-Balanced Nodal Discontinuous Galerkin Method for Euler Equations with Gravity},
	volume = {71},
	year = {2017},
  issn = {1573-7691}
}

@article{MICHELDANSAC2025106853,
title = {Towards a fully well-balanced and entropy-stable scheme for the Euler equations with gravity: General equations of state},
journal = {Computers \& Fluids},
volume = {303},
pages = {106853},
year = {2025},
issn = {0045-7930},
doi = {10.1016/j.compfluid.2025.106853},
author = {Victor Michel-Dansac and Andrea Thomann}
}

@article{KAPPELI2014199,
title = {Well-balanced schemes for the Euler equations with gravitation},
journal = {Journal of Computational Physics},
volume = {259},
pages = {199-219},
year = {2014},
issn = {0021-9991},
doi = {10.1016/j.jcp.2013.11.028},
author = {R. Käppeli and S. Mishra}
}

@article{smolarkiewicz1997forward,
author = {Piotr K. Smolarkiewicz and Len G. Margolin},
title = {On Forward-in-Time Differencing for Fluids: an Eulerian/Semi-Lagrangian Non-Hydrostatic Model for Stratified Flows},
journal = {Atmosphere-Ocean},
volume = {35},
number = {sup1},
pages = {127--152},
year = {1997},
publisher = {Taylor \& Francis},
doi = {10.1080/07055900.1997.9687345}
}

@inproceedings{spichtinger2008critical,
  author       = {Spichtinger, Peter and Smolarkiewicz, Piotr K.},
  title        = {Turbulence in cirrus clouds -- impact of critical layers},
  booktitle    = {Proceedings of the 15th International Conference on Clouds and Precipitation},
  eventtitle   = {15th International Conference on Clouds and Precipitation ({ICCP})},
  eventdate    = {2008-07-07/2008-07-11},
  venue        = {Canc\'{u}n, Mexico},
  note         = {Extended abstract},
  year         = {2008}
}

@article{grubisic1997critical,
  author  = {Grubi\v{s}i\'{c}, Vanda and Smolarkiewicz, Piotr K.},
  title   = {The Effect of Critical Levels on {3D} Orographic Flows: Linear Regime},
  journal = {Journal of the Atmospheric Sciences},
  volume  = {54},
  number  = {15},
  pages   = {1943--1960},
  year    = {1997},
  doi     = {10.1175/1520-0469(1997)054<1943:TEOCLO>2.0.CO;2}
}

@article{wang2013high,
  title={High-order {CFD} methods: current status and perspective},
  author={Wang, Zhijian J and Fidkowski, Krzysztof and
          Abgrall, R{\'e}mi and Bassi, Francesco and Caraeni, Doru
          and Cary, Andrew and Deconinck, Herman and Hartmann, Ralf
          and Hillewaert, Koen and Huynh, Hung T and Kroll, Norbert
          and May, Georg and Persson, Per-Olof and van Leer, Bram
          and Visbal, Miguel},
  journal={International Journal for Numerical Methods in Fluids},
  volume={72},
  number={8},
  pages={811--845},
  year={2013},
  publisher={Wiley Online Library},
  doi={10.1002/fld.3767}
}

@article{CREAN2018410,
title = {Entropy-stable summation-by-parts discretization of the Euler equations on general curved elements},
journal = {Journal of Computational Physics},
volume = {356},
pages = {410-438},
year = {2018},
issn = {0021-9991},
doi = {10.1016/j.jcp.2017.12.015},
url = {https://www.sciencedirect.com/science/article/pii/S0021999117308999},
author = {Jared Crean and Jason E. Hicken and David C. {Del Rey Fernández} and David W. Zingg and Mark H. Carpenter}
}

@article{NOLASCO2020104555,
title = {Optimized geometrical metrics satisfying free-stream preservation},
journal = {Computers \& Fluids},
volume = {207},
pages = {104555},
year = {2020},
issn = {0045-7930},
doi = {10.1016/j.compfluid.2020.104555},
url = {https://www.sciencedirect.com/science/article/pii/S0045793020301274},
author = {Irving Reyna Nolasco and Lisandro Dalcin and David C. {Del Rey Fernández} and Stefano Zampini and Matteo Parsani}
}

@article{ThomasLombard,
author = {Thomas, P. D. and Lombard, C. K.},
title = {Geometric Conservation Law and Its Application to Flow Computations on Moving Grids},
journal = {AIAA Journal},
volume = {17},
number = {10},
pages = {1030-1037},
year = {1979},
doi = {10.2514/3.61273}
}

@article{ZHANG20108918,
title = {On positivity-preserving high order discontinuous Galerkin schemes for compressible Euler equations on rectangular meshes},
journal = {Journal of Computational Physics},
volume = {229},
number = {23},
pages = {8918-8934},
year = {2010},
issn = {0021-9991},
doi = {10.1016/j.jcp.2010.08.016},
url = {https://www.sciencedirect.com/science/article/pii/S0021999110004535},
author = {Xiangxiong Zhang and Chi-Wang Shu}
}

@article{zhang2011maximum,
  title={Maximum-principle-satisfying and positivity-preserving
         high-order schemes for conservation laws: survey and new
         developments},
  author={Zhang, Xiangxiong and Shu, Chi-Wang},
  journal={Proceedings of the Royal Society of London A: Mathematical,
           Physical and Engineering Sciences},
  volume={467},
  number={2134},
  pages={2752--2776},
  year={2011},
  publisher={The Royal Society},
  doi={10.1098/rspa.2011.0153}
}

@misc{candelaresi2026massivelyparallelnumericalsimulations,
      title={Massively parallel numerical simulations with Julia},
      author={Simon Candelaresi and Benedict Geihe and Marco Artiano and Lars Christmann and Valentin Churavy and Andrés Rueda-Ramírez and Hendrik Ranocha and Gregor J. Gassner and Michael Schlottke-Lakemper},
      year={2026},
      archivePrefix={arXiv},
      primaryClass={math.NA},
      doi={10.48550/arXiv.2607.25866},
  eprint = {2607.25866}
}

@article{gassmann2013,
  author  = {Gassmann, Almut},
  title   = {A global hexagonal {C}-grid non-hydrostatic dynamical core ({ICON-IAP})
             designed for energetic consistency},
  journal = {Quarterly Journal of the Royal Meteorological Society},
  volume  = {139},
  pages   = {152--175},
  year    = {2013},
  doi     = {10.1002/qj.1960},
  number = {670}
}

@article{marras2016review,
  author  = {Marras, Simone and Kelly, James F. and Moragues, Margarida and
             M{\"u}ller, Andreas and Kopera, Michal A. and V{\'a}zquez, Mariano and
             Giraldo, Francis X. and Houzeaux, Guillaume and Jorba, Oriol},
  title   = {A Review of Element-Based {G}alerkin Methods for Numerical Weather
             Prediction: Finite Elements, Spectral Elements, and Discontinuous {G}alerkin},
  journal = {Archives of Computational Methods in Engineering},
  volume  = {23},
  pages   = {673--722},
  year    = {2016},
  doi     = {10.1007/s11831-015-9152-1},
  number = {4}
}

@article{gassner2018br1,
  author  = {Gassner, Gregor J. and Winters, Andrew R. and Hindenlang, Florian J. and
             Kopriva, David A.},
  title   = {The {BR1} Scheme is Stable for the Compressible {N}avier-{S}tokes Equations},
  journal = {Journal of Scientific Computing},
  volume  = {77},
  pages   = {154--200},
  year    = {2018},
  doi     = {10.1007/s10915-018-0702-1},
  number = {1}
}

@article{lukacova2022convergence,
  title={Convergence of Discontinuous {G}alerkin Schemes for the {E}uler
         Equations via Dissipative Weak Solutions},
  author={Luk{\'a}cov{\'a}-Medvidov{\'a} and {\"O}ffner, Philipp},
  journal={Applied Mathematics and Computation},
  volume={436},
  pages={127508},
  year={2023},
  publisher={Elsevier},
  doi={10.1016/j.amc.2022.127508},
  eprint={2202.10043},
  eprinttype={arxiv},
  eprintclass={math.NA}
}

@online{ranocha2026convergence,
  title={Convergence of entropy-conservative summation-by-parts
         discretizations to smooth solutions of hyperbolic
         conservation laws},
  author={Ranocha, Hendrik},
  year={2026},
  month={07},
  eprint={2607.27049},
  eprinttype={arxiv},
  eprintclass={math.NA},
  doi={10.48550/arXiv.2607.27049}
}

@online{chen2026error,
  title={The error estimate of entropy-stable discontinuous {G}alerkin
         methods for hyperbolic conservation laws},
  author={Chen, Xu-Kun and Liu, Yong and Shu, Chi-Wang},
  year={2026},
  month={08},
  eprint={2608.06642},
  eprinttype={arxiv},
  eprintclass={math.NA},
  doi={10.48550/arXiv.2608.06642}
}

@inproceedings{MultiFloatsJL,
  author = {Zhang, David Kai and Aiken, Alex},
  title = {High-Performance Branch-Free Algorithms for Extended-Precision Floating-Point Arithmetic},
  year = {2025},
  isbn = {9798400714665},
  publisher = {Association for Computing Machinery},
  address = {New York, NY, USA},
  url = {https://doi.org/10.1145/3712285.3759876},
  doi = {10.1145/3712285.3759876},
  booktitle = {Proceedings of the International Conference for High Performance Computing, Networking, Storage and Analysis},
  pages = {695–710},
  numpages = {16},
  location = {St. Louis, MO, USA},
  series = {SC '25}
}

@article{conde2022embedded,
  title={Embedded pairs for optimal explicit strong stability preserving
         {R}unge-{K}utta methods},
  author={Conde, Sidafa and Fekete, Imre and Shadid, John N},
  journal={Journal of Computational and Applied Mathematics},
  volume={412},
  pages={114325},
  year={2022},
  doi={10.1016/j.cam.2022.114325},
  eprint={1806.08693},
  eprinttype={arxiv},
  eprintclass={math.NA}
}
\end{document}